\newtheorem{thm}{Theorem}[section]
\newtheorem*{thm*}{Theorem}
\newtheorem{lem}[thm]{Lemma}
\newtheorem{prop}[thm]{Proposition}
\newtheorem*{prop*}{Proposition}
\newtheorem{cor}[thm]{Corollary}
\theoremstyle{definition}
\newtheorem{defn}[thm]{Definition}
\newtheorem{remarks}[thm]{Remarks}
\def\e{\epsilon}
\def\cal{\mathcal}
\newcommand{\cstar}{$\mathrm{C}^*$}
\newcommand{\wstar}{$\mathrm{W}^*$}
\DeclareMathOperator{\Aut}{Aut}
\DeclareMathOperator{\vNa}{vNa}
\newcommand\ip[2]{\left\langle #1\, ,\!\ #2 \right\rangle}
\newcommand\cU{{\cal U}}
\def \tb{\operatorname{tb}}
\def \rt{\operatorname{right}}
\def \GR{\operatorname{GR}}
\def \Oc{\operatorname{Oc}}
\def \bbC{\mathbb{C}}
\def \bbR{\mathbb{R}}
\def \bbQ{\mathbb{Q}}
\def \bbN{\mathbb{N}}
\begin{document}


\title[Group Actions and Crossed Products]{Model Theory of General von Neumann Algebras II: Group Actions and Crossed Products}

\author{Jananan Arulseelan}
\address{Department of Mathematics, Iowa State University, 396 Carver Hall, 411 Morrill Road, Ames, IA 50011, USA}
\email{jananan@iastate.edu}
\urladdr{https://sites.google.com/view/jananan-arulseelan}


\begin{abstract}
Expanding on previous work of the author, we initiate the model theoretic study of \wstar-dynamical systems.  We axiomatize continuous weight-preserving group actions of $G$ on von Neumann algebras for $G$ a given locally compact Hausdorff group.  Since our axiomatization is of continuous actions, the ultraproduct is defined so that the ultraproduct action of $G$ is also continuous.  Building on a theorem of Tomatsu, we show that continuous ultraproducts commute with crossed products.  Finally, we prove a suite of results about computability of the aforementioned axiomatizations and of presentations of crossed products.  In particular, we show how the crossed product construction is a useful tool for producing computable presentations, giving special attention to the group measure space construction of Murray and von Neumann.  Thereby, we establish interesting connections to computable dynamics and computable measure theory.
\end{abstract}

\maketitle

\tableofcontents

\section{Introduction}\label{SectionIntro}

In \cite{Arul}, we identified the model theory of (standard forms of) full left Hilbert algebras, or equivalently, von Neumann algebras equipped with faithful normal semifinite weights.  We also characterized the ultraproduct that arises from this model-theoretic study.  In this article, we extend the former to a $G$-equivariant setting, and study the corresponding ultraproduct.  More precisely, we begin a model-theoretic study of \wstar-dynamical systems as well as their crossed products and their continuous ultraproducts.  We also discuss how crossed products interact with computability.

In Section \ref{SectionDiscrete} (resp. Section \ref{SectionLCG}) we demonstrate a language and axioms for (continuous) actions of a discrete (resp. locally compact Hausdorff) group on von Neumann algebras which preserve a fixed given faithful normal semifinite weight.  Our approach builds upon ideas from the axiomatization of continuous unitary $G$-representations due to Ben Yaacov-Goldbring in \cite{BYG} and the theory of standard forms due to Haagerup in \cite{Ha75}.  For such an approach to be successful, one needs to know that the automorphisms involved act predictably on the sorts.  We show this to be the case.  That is, we show that the sets of totally $(K-)$bounded elements of \cite{AGHS} and \cite{Arul} are preserved under automorphisms.   For the convenience of the reader, the review standard forms in Section \ref{SectionBackground}.  The related setting of \cstar-dynamical systems with respect to a compact group was treated model-theoretically in \cite{GKL} and \cite{GL} using very different methods.  There, various applications to the classification program for \cstar-algebras were given. 

In Section \ref{SectionUltraproducts}, we show in the case that $G$ is abelian, that our ultraproducts commute with the crossed product construction (where the crossed product is given the dual weight and dual action).  

\begin{thm*}
    Let $(\cal M_i, \Phi_i, (\alpha_g)_i)$ be a family of $G$-\wstar-dynamical systems indexed by $I$ where $G$ is a locally compact abelian group.  Let $(\cal M_i \rtimes_{\alpha_i} G, \widehat{\Phi}_i, (\widehat{\alpha})_i)$ be the dual $\widehat{G}$-\wstar-dynamical system.  Then $(\prod^{\cU}_{\alpha-c} (\mathcal{M}_i, \Phi_i)) \rtimes_{\alpha^{\cU}} G \cong \prod_{\widehat{\alpha}-c}^{\cU} (\mathcal{M}_i \rtimes_{\alpha_i} G, \widehat{\Phi}_i)$.  Furthermore, the isomorphism witnessing this is canonical.
\end{thm*}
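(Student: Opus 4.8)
The plan is to prove the isomorphism by exhibiting a canonical $*$-map on generators and verifying that it is weight-preserving, so that it extends to a spatial isomorphism of standard forms. Throughout I write $\mathcal{M}^{\cU} = \prod^{\cU}_{\alpha-c}(\mathcal{M}_i, \Phi_i)$ with its continuous ultraproduct weight $\Phi^{\cU}$ and continuous action $\alpha^{\cU}$, and I realize each $\mathcal{M}_i$ on $H_i = L^2(\mathcal{M}_i, \Phi_i)$, so that $\mathcal{M}_i \rtimes_{\alpha_i} G$ acts on $H_i \otimes L^2(G)$, generated by $\pi_{\alpha_i}(\mathcal{M}_i)$ and the unitaries $\lambda_i(g)$ with $\lambda_i(g)\pi_{\alpha_i}(x)\lambda_i(g)^* = \pi_{\alpha_i}((\alpha_g)_i(x))$.

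First I would fix coordinates for the dual system. Since $G$ is abelian, the dual action $\widehat{\alpha}_i$ of $\widehat{G}$ fixes $\pi_{\alpha_i}(\mathcal{M}_i)$ pointwise and scales the generating unitaries by characters, $(\widehat{\alpha})_{i,\chi}(\lambda_i(g)) = \langle g, \chi \rangle \lambda_i(g)$. Consequently every $\pi_{\alpha_i}(x)$ is trivially $\widehat\alpha$-continuous, while $\chi \mapsto \langle g, \chi\rangle \lambda_i(g)$ is norm-continuous, so each $\lambda_i(g)$ is $\widehat\alpha$-continuous as well. This is the first place the abelian hypothesis is used: Pontryagin duality lets me read off $\widehat\alpha$-continuity of the generators directly from the pairing $\langle \cdot\, , \cdot\rangle$.

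Next I define the candidate map $\Theta$ on generators by sending $\pi_{\alpha^{\cU}}\big((x_i)^{\cU}\big)$ to the class of $(\pi_{\alpha_i}(x_i))_i$ and the ultraproduct unitary $\lambda^{\cU}(g)$ to the class of $(\lambda_i(g))_i$ in the right-hand continuous ultraproduct. Well-definedness requires (i) that these target classes genuinely lie in $\prod^{\cU}_{\widehat\alpha-c}(\mathcal{M}_i \rtimes_{\alpha_i} G, \widehat{\Phi}_i)$, which follows from the previous paragraph together with the fact that $(x_i)^{\cU} \in \mathcal{M}^{\cU}$ forces $(x_i)$ to be $\alpha$-continuous, and (ii) that $\Theta$ respects the crossed-product relation, which is immediate since that relation is imposed coordinatewise. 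Here I would invoke the author's description of the continuous ultraproduct of standard forms together with the cited theorem of Tomatsu to identify $\prod^{\cU}_c(H_i \otimes L^2(G))$ with $H^{\cU} \otimes L^2(G)$, both $G$- and $\widehat G$-equivariantly, where $H^{\cU} = \prod^{\cU}_c H_i$; this is what guarantees that $\Theta$ is spatially implemented and normal.

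Finally I would check that $\Theta$ is weight-preserving and surjective. Haagerup's formula expresses $\widehat{\Phi}_i$ canonically in terms of $\Phi_i$ and the group data, and the continuous ultraproduct weight is computed coordinatewise on totally bounded elements via \cite{Arul}; since the dual-weight construction is functorial in the system, $\widehat{\Phi^{\cU}}$ and $\prod^{\cU}_{\widehat\alpha-c}\widehat{\Phi}_i$ agree on the common generating $*$-algebra, hence everywhere, and a weight-preserving normal $*$-homomorphism between standard forms is automatically a spatial isomorphism onto its image. Surjectivity then reduces to showing that the $\widehat\alpha$-continuous elements of the ultraproduct crossed product are generated by the images of $\pi_{\alpha_i}(\mathcal{M}_i)$ and the $\lambda_i(g)$, which is again extracted from Tomatsu's theorem. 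The main obstacle is precisely this equivariant identification: one must reconcile the two distinct continuity conditions, namely continuity of the $G$-action defining the left-hand ultraproduct versus continuity of the dual $\widehat G$-action defining the right-hand one, and verify that the continuity-restricted ultraproduct commutes with the spatial crossed product rather than merely with the algebraic or the Ocneanu ultraproduct. The abelian hypothesis keeps this tractable, since Pontryagin duality aligns the two continuity conditions through the character pairing and renders the dual-weight bookkeeping explicit.
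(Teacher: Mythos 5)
Your easy direction is essentially the paper's Lemma 6.3: the coordinatewise map on $\pi_{\alpha_i}(x_i)$ and $\lambda_i(g)$ does give a well-defined, weight-preserving embedding of $(\prod^{\cU}_{\alpha-c}(\mathcal{M}_i,\Phi_i))\rtimes_{\alpha^{\cU}}G$ into the ultraproduct of the crossed products (though note you still need to check that $(\lambda_i(g))^\bullet$ lies in the Ocneanu ultraproduct at all, i.e.\ is $\sigma^{\widehat{\Phi}}$-equicontinuous; the paper gets this from the dual-weight formula $\sigma_t^{\widehat{\Phi}_i}(\lambda_{\alpha}(s))=\Delta_G(s)^{it}\lambda_{\alpha}(s)((D\Phi\circ\alpha_s:D\Phi)_t)$ and the hypothesis that $\alpha$ preserves $\Phi$, which kills the Connes cocycle). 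Your verification that the generators are $\widehat{\alpha}$-continuous via the character pairing is fine.

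The genuine gap is surjectivity, and the two places where you claim to "extract it from Tomatsu's theorem" are exactly where the content lives. Tomatsu's result covers only ultra\emph{powers}, and the paper states explicitly that the author does not see how to adapt that proof to ultraproducts; so citing it is circular for the general statement. Likewise, the identification $\prod^{\cU}_{c}(H_i\otimes \mathrm{L}^2(G))\cong H^{\cU}\otimes \mathrm{L}^2(G)$ that you invoke to get spatial implementation is essentially equivalent to the theorem at the Hilbert-space level, not an input to it. What is actually needed is an argument that every $\widehat{\alpha}$-continuous element of $\prod^{\cU}_{\operatorname{Oc}}(\mathcal{M}_i\rtimes_{\alpha_i}G,\widehat{\Phi}_i)$ is approximable by elements in the image of your $\Theta$. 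The paper does this by developing Arveson spectral theory for continuous ultraproducts: an element of the Ocneanu ultraproduct is $\widehat{\alpha}$-continuous iff it is approximable by sequences $(\xi_i)$ whose Arveson spectra lie in a single compact set $K$ independent of $i$ (Theorem 6.6), and such $\xi_i$ are precisely compactly supported $\mathcal{H}_{\Phi_i}$-valued functions with support in $K$ (Lemma 6.7); the pointwise ultralimit $\xi_{\cU}(s)=\lim_{i\to\cU}\xi_i(s)$ then lands in $C_c(G,\prod^{\cU}_{\alpha-c}(\mathcal{M}_i,\Phi_i))$, which is dense in the left-hand side. Without some replacement for this uniform-compact-spectrum approximation, your map $\Theta$ is only an embedding with expectation, not an isomorphism.
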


The relevant background material on crossed products and duality is reviewed in Section \ref{SectionCrossedBackground}.

Finally, in Section \ref{SectionComputability}, we address various computability considerations that arise from our work.  We study when the axiomatizations given in Section \ref{SectionDiscrete} and Section \ref{SectionLCG} can be made computable.  In particular, we show the following, noting also various generalizations.

\begin{thm*}
    The class of $G$-\wstar-dynamical systems is computably axiomatizable if:
    \begin{itemize}
        \item $G$ is discrete and computably presented with computable word problem.
        \item $G$ is locally compact Hausdorff and $\mathrm{L}^1(G)$ admits a computable presentation as a Banach $^*$-algebra.
    \end{itemize}
\end{thm*}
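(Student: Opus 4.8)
The plan is to reduce the statement to the computability of the two families of equivariant axioms introduced in Sections~\ref{SectionDiscrete} and~\ref{SectionLCG}. The first step is to record (or recall from \cite{Arul}) that the underlying non-equivariant axiomatization of standard forms of von Neumann algebras equipped with a faithful normal semifinite weight is itself computably axiomatizable, so that attention can be restricted to the new axioms encoding that $(\al_g)_{g\in G}$ is a weight-preserving continuous action. I would then treat the two cases separately, since they rest on genuinely different encodings of the action, and in each case exhibit the equivariant axioms as a computable sequence of closed conditions with computable values.

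For the discrete case, the action is axiomatized by introducing, for each $g\in G$, a function symbol $\al_g$ on the standard-form sorts, together with the conditions expressing that $\al_e=\id$, that each $\al_g$ is a weight-preserving $*$-automorphism, and that $\al_g\al_h=\al_{gh}$. Since each $\al_g$ acts isometrically on the relevant sorts, its modulus of uniform continuity is trivial, so the language is computable once the index set $G$ can be listed effectively. A computable presentation makes the words recursively enumerable, and the computable word problem both reduces this to a computable listing of the distinct elements and lets us identify the product $gh$. The only axioms whose very \emph{statement} requires computation are the multiplicativity conditions $\al_g\al_h=\al_{gh}$, where one must know which function symbol occurs on the right; the decidable word problem supplies exactly this, so the triples $(g,h,gh)$ are enumerable. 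The remaining automorphism and weight-invariance conditions have rational (indeed zero) target values, so the full equivariant axiom set is computable.

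For the locally compact case, the action cannot be captured element-by-element; following Ben Yaacov--Goldbring~\cite{BYG}, it is encoded through the integrated action of the convolution algebra, i.e.\ by function symbols $\al_f$ for $f$ in a suitable dense subset of $\mathrm{L}^1(G)$, implementing $\int_G f(g)\,\al_g\,dg$ on the sorts. The equivariant axioms then assert that $f\mapsto\al_f$ is a weight-compatible $*$-representation of the Banach $*$-algebra $\mathrm{L}^1(G)$: multiplicativity against convolution $\al_{f*h}=\al_f\al_h$, $*$-compatibility $\al_{f^{*}}=(\al_f)^{*}$, the norm bound governed by $\|f\|_1$, together with the non-degeneracy (approximate-identity) clause forcing the $\mathrm{L}^1(G)$-module structure to arise from a genuine continuous $G$-action. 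To make these computable I would fix the dense computable sequence $(f_n)$ furnished by the presentation and observe that every datum entering the axioms — the convolutions $f_m*f_n$, the involutions $f_n^{*}$, and the norms $\|f_n\|_1$ — is computable precisely because $\mathrm{L}^1(G)$ is presented computably as a Banach $*$-algebra. The modulus of uniform continuity of $\al_{f_n}$ is controlled by the computable real $\|f_n\|_1$, so the language is computable and each axiom is a closed condition with computable value.

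The main obstacle lies in this last case, and it is twofold. First, one must check that the non-degeneracy/continuity clause — the heart of the \emph{continuous} axiomatization — can be written using only the computable family $(f_n)$; concretely, this requires that an approximate identity of $\mathrm{L}^1(G)$ be extractable from the presentation, so that the relevant $\inf$-over-$f_n$ condition faithfully expresses non-degeneracy. Second, one must verify that passing to the countable computable subalgebra generated by $(f_n)$ loses nothing: the computable axioms must be logically equivalent to the full equivariant axiomatization of Section~\ref{SectionLCG}, which is a density argument showing that weight-invariance and the representation relations, once imposed on $(f_n)$, propagate to all of $\mathrm{L}^1(G)$ by uniform continuity. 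Both points reduce to standard approximation in $\mathrm{L}^1(G)$, but they are where the computable Banach $*$-algebra hypothesis does the essential work.
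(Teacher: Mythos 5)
Your proposal follows essentially the same route as the paper: in the discrete case the axioms are enumerated via the computable presentation, with the decidable word problem invoked exactly where you invoke it (to identify the symbol $\alpha_{sr}$ in the multiplicativity axiom), and in the locally compact case one restricts the sorts and axioms to the rational points of a computable presentation of $\mathrm{L}^1(G)$, using computability of the norms $\|f\|_1$ to compute the radii, norm bounds, and Hausdorff distances appearing in the axioms. The paper's own treatment is in fact far terser than yours --- both propositions are stated with one-line justifications --- so the approximate-identity and density-of-the-restricted-language issues you flag at the end are genuine details the paper leaves implicit rather than points of divergence.
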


We also study how computable presentations interact with crossed products, especially in the special case of the group measure space construction.  The latter is a ubiquitous method of constructing $\mathrm{II}_1$ factors and is therefore an area where many interesting applications may appear.

Recently, there has been significant interest in interactions between continuous logic and quantum mechanics, quantum information theory and quantum field theory (see \cite{GoldbringQIT} and \cite{Zilber}).  Through the seminal work of Witten \cite{Witten}, it is known that crossed products play a central role in studying quantum systems.  Therefore, we believe that the work here may help to elucidate some of the interactions at the nexus of these subjects.

\section{Background on Standard Forms}\label{SectionBackground}

The material in this section can be found in \cite{Ha75} and \cite[Chapter IX]{Takesaki}.  It is likely familiar to the reader working in von Neumann algebras and can be safely skipped, if so.  It is included for the benefit of the reader less experienced with (arbitrary) von Neumann algebras.

\begin{defn}
    We say that $(\cal M, \Phi, \alpha)$ is a \textbf{\wstar-dynamical system} (or $G$-\wstar-dynamical system when we are working with a fixed $G$) if $\cal M$ is a von Neumann algebra, $\Phi$ is a faithful normal semifinite weight on $\cal M$, $G$ is a locally compact Hausdorff group, and $\alpha$ is a strongly continuous $\Phi$-preserving action of $G$ on $\cal M$.  
\end{defn}

\begin{defn}{\cite{Ha75} and \cite[Lemma 3.19]{AH}}
    Suppose $\cal M$ is a von Neumann algebra, $\cal H$ a Hilbert space on which $\cal M$ acts, $J$ is an antilinear isometry on $\cal H$ such that $J^2 = 1$, and $\cal P \subseteq \cal H$ is a closed convex self-dual cone. We say that $(\cal M, \cal H, J, \cal P)$ is a \textbf{standard form} if:
    \begin{itemize}
        \item $J \cal M J = \cal M'$;
        \item $Jv = v$ for all $v \in \cal P$; and
        \item $xJxJ\cal P \subseteq \cal P$ for all $x \in \cal M$.
    \end{itemize}
\end{defn}

We remark that \cite{Ha75} imposes a fourth condition for standard forms.  However, by \cite[Lemma 3.19]{AH}, this condition is redundant.

Standard forms are unique in the following sense. Suppose $(\cal M_0, \cal H_0, J_0, \cal P_0)$ and $(\cal M_1, \cal H_1, J_1, \cal P_1)$ are standard forms and $\Theta: \cal M_0 \to \cal M_1$ is a $^*$-isomorphism, then there is a unique unitary $u: \cal H_0 \to \cal H_1$ such that:
\begin{itemize}
    \item $\Theta(x) = uxu^*$ for all $x \in \cal M_0$;
    \item $J_1 = uJ_0 u^*$; and
    \item $u(\cal P_0) = \cal P_1$.
\end{itemize}

An easy consequence of this is the theorem to follow.  But first, we make a relevant definition.

\begin{defn}{\cite[Definition 3.1]{Ha75}}
    Let $\cal M$ be a von Neumann algebra acting on a Hilbert space $\cal H$, and $G$ a group of $^*$-automorphisms of $\cal M$.  A \textbf{unitary implementation} of $G$ is a unitary representation $s \mapsto u_s$, of $G$ on $\cal H$, such that
    \[
    \alpha_{s}(x) = u_s x u_s^* \qquad \text{for all } s \in G, x \in \cal M.
    \]
\end{defn}

\begin{thm}{\cite[Theorem 2.3 and Theorem 3.2]{Ha75}}\label{ImpCond}
     Let $(\cal M, \cal H, J, \cal P)$ be a standard form.  The group $\Aut(\cal M)$ of $^*$-automorphisms of $\cal M$ has a unique unitary implementation $s \mapsto u_s$, such that
     \[
     J = u_s J u_s^{-1} \qquad u_s \cal P = \cal P \qquad \text{for all } s \in G.
     \]
     Moreover, the unitaries of $\cal H$ which implement $^*$-automorphisms are characterized by these conditions.
\end{thm}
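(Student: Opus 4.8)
The plan is to derive everything from the stated uniqueness of standard forms. Fix the standard form $(\cal M, \cal H, J, \cal P)$ and let $\alpha$ be an arbitrary $^*$-automorphism of $\cal M$, so that $\alpha$ ranges over $\Aut(\cal M)$ and in particular over the subgroup $G$ appearing in the statement. Applying the uniqueness theorem to the two copies $(\cal M, \cal H, J, \cal P)$ and $(\cal M, \cal H, J, \cal P)$ together with the $^*$-isomorphism $\Theta = \alpha$ yields a unique unitary $u_\alpha$ on $\cal H$ with $\alpha(x) = u_\alpha x u_\alpha^*$ for all $x \in \cal M$, with $J = u_\alpha J u_\alpha^*$, and with $u_\alpha \cal P = \cal P$. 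This produces at once the existence of an implementation having the prescribed behaviour on $J$ and on $\cal P$, together with its uniqueness: any unitary that implements $\alpha$ while preserving $J$ and $\cal P$ must coincide with $u_\alpha$, by the uniqueness clause of that theorem.

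Next I would check that $\alpha \mapsto u_\alpha$ is a unitary representation, i.e.\ a group homomorphism. For $\alpha, \beta \in \Aut(\cal M)$ the unitary $u_\alpha u_\beta$ implements $\alpha\beta$, since $(u_\alpha u_\beta)\, x\, (u_\alpha u_\beta)^* = u_\alpha\, \beta(x)\, u_\alpha^* = \alpha(\beta(x))$; moreover it fixes $J$ under conjugation and preserves $\cal P$, because each factor does. By the uniqueness just established, $u_{\alpha\beta} = u_\alpha u_\beta$, and likewise $u_{\id} = 1$ and $u_{\alpha^{-1}} = u_\alpha^*$. This settles the first assertion of the theorem.

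For the ``moreover'' clause I must show that the unitaries $u$ satisfying $u J u^{-1} = J$ and $u \cal P = \cal P$ are exactly the implementations $u_\alpha$. One inclusion is immediate, since each $u_\alpha$ satisfies both conditions by construction. For the converse, suppose $u$ is a unitary with $u J u^* = J$ and $u \cal P = \cal P$; I claim $u$ normalizes $\cal M$, so that $\Ad(u)$ is a $^*$-automorphism and $u$ is its canonical implementation. Setting $\cal N = u \cal M u^*$ and using $Ju = uJ$ (equivalently $Ju^* = u^* J$), one verifies directly that $(\cal N, \cal H, J, \cal P)$ is again a standard form: the relation $J\cal N J = \cal N'$ follows from $J \cal M J = \cal M'$ by conjugating with $u$ and commuting $u$ past $J$; the identity $Jv = v$ on $\cal P$ is untouched; and for $x = u y u^* \in \cal N$ one computes $x J x J = u\, y (JyJ)\, u^*$, so that $xJxJ\,\cal P = u\, y(JyJ)\,\cal P \subseteq u\cal P = \cal P$ using $u^*\cal P = \cal P$ and the corresponding property for $\cal M$.

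The crux is therefore the assertion that the data $(\cal H, J, \cal P)$ determine the underlying algebra, forcing $\cal N = \cal M$ and hence $u\cal M u^* = \cal M$. I expect this to be the main obstacle: it is precisely the substantive content of the theory of standard forms in \cite{Ha75} (the self-dual cone $\cal P$ already determines $J$ through the real subspace $\cal P - \cal P$, and then recovers $\cal M$), and it cannot be extracted from the uniqueness statement alone, since that statement presupposes an isomorphism between the two algebras rather than producing the equality of their underlying operator sets. Granting this recovery, $u$ implements $\Ad(u) \in \Aut(\cal M)$, and as $u$ preserves $J$ and $\cal P$ the uniqueness from the first paragraph forces $u = u_{\Ad(u)}$, which completes the characterization.
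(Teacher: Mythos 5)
Your first two paragraphs are correct and are essentially the paper's own argument: the paper offers no proof of Theorem \ref{ImpCond} beyond the remark that it is ``an easy consequence'' of the uniqueness of standard forms (the result is cited from \cite{Ha75}), and deriving existence, uniqueness, and the homomorphism property of $s\mapsto u_s$ from that uniqueness statement, exactly as you do, is what is intended.

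The trouble is in your converse direction for the ``moreover'' clause. You correctly verify that $(u\cal M u^*,\cal H,J,\cal P)$ is again a standard form, and you correctly flag the remaining step --- that $(\cal H,J,\cal P)$ determines $\cal M$ --- as the crux you cannot supply. But that step is not merely missing; it is false as stated. The quadruple $(\cal M',\cal H,J,\cal P)$ is also a standard form with the same $J$ and $\cal P$ (write $x'=JxJ$ and check the three axioms), and a unitary preserving $J$ and $\cal P$ can carry $\cal M$ onto $\cal M'$: for $\cal M=B(\cal H_0)$ in its standard form on the Hilbert--Schmidt operators, the transpose map $T\mapsto T^{t}$ is a unitary that commutes with $J(T)=T^*$ and fixes the cone of positive operators, yet it conjugates left multiplications to right multiplications. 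What the cone actually determines is only the Jordan structure of $\cal M$ (Connes' characterization of self-dual cones), so a unitary preserving $J$ and $\cal P$ implements a Jordan automorphism in general, not a $^*$-automorphism. The ``moreover'' clause should therefore be read --- and is the way the paper uses it in Lemma \ref{uIsAuto} --- as a statement about unitaries already known to normalize $\cal M$: among those, the canonical implementation of $\Ad(u)|_{\cal M}$ is characterized by $uJu^*=J$ and $u\cal P=\cal P$, which is again just the uniqueness clause. The normalization $u\cal M u^*=\cal M$ must be supplied by a separate argument, as the paper does in Lemma \ref{uIsAuto} via density of the totally bounded elements in $\cal M$ and in $\cal M'$.
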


The relevance of standard forms to continuous model theory of von Neumann algebras is seen in \cite{Arul}.  Namely, we can associate a canonical standard form to any pair $(\cal M, \Phi)$ consisting of a von Neumann algebra $\cal M$ and a faithful normal semifinite weight $\Phi$.  This is done as follows. Set
\[
n_{\Phi} = \{ x \in \cal M \ : \ \Phi(x^*x) < \infty \} \text{ and } m_{\Phi} = n_{\Phi}^*n_{\Phi}
\]
Denote by $D_{\Phi}$ the \textbf{domain of definition} $D_{\Phi} = \mathrm{span}(m_{\Phi})$.  Then $D_{\Phi}$ admits an inner product $\ip{x}{y}_{\Phi} = \Phi(y^*x)$.  The Hilbert space $\cal H_{\Phi}$ obtained by completing $D_{\Phi}$ with respect to this inner product admits a natural action of $\cal M$ by left multiplication.  We have an obvious injection $\eta_{\Phi}: D_{\Phi} \to \cal H_{\Phi}$.  This makes $\mathfrak{A}_{\Phi} := m_{\Phi} \cap m_{\Phi}^*$ into a full left Hilbert algebra.  Consider $J_{\Phi}$ the modular conjugation for this Hilbert algebra and set
\[
\cal P_{\Phi} = \overline{\{ JaJa \ : \ a \in \mathfrak{A}_{\Phi} \}}.
\]
Then $(\cal M_{\Phi}, \cal H_{\Phi}, J_{\Phi}, \cal P_{\Phi})$ is the standard form for $(\cal M, \Phi)$.    We occasionally abuse notation and write $\eta_{\Phi}(\cal M)$ to mean $\mathfrak{A}_{\Phi}$ as constructed above.

If $\alpha$ is an action of $G$ on $\cal M$ by $^*$-automorphisms, then for $s \in G$, the unitary implementation $u_s$ of $\alpha_s$ can be constructed by extending the map $\eta_{\Phi}(x) \mapsto \eta_{\Phi}(\alpha_{s}(x))$.

\begin{cor}{\cite[Corollary 3.6]{Ha75}}
     Let $(\cal M, \cal H, \cal J, \cal P)$ be a standard form, $G$ a locally compact group and $\alpha: G \to \Aut(\cal M)$ a $\sigma$-weakly continuous representation of $G$ on $\cal M$. Then the canonical unitary implementation $s \to u_s$ of $G$ is strongly continuous.
\end{cor}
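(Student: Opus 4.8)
The plan is to transport the problem from the Hilbert space $\cal H$ to the predual $\cal M_*$ via the canonical homeomorphism between the self-dual cone $\cal P$ and the normal positive functionals, where the hypothesis of $\sigma$-weak continuity can be brought to bear. Write $s \mapsto u_s$ for the canonical unitary implementation furnished by Theorem \ref{ImpCond}, and for $\xi \in \cal P$ let $\omega_\xi \in \cal M_*$ be the functional $\omega_\xi(x) = \ip{x\xi}{\xi}$.

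First I would note that $s \mapsto u_s$ is a unitary \emph{representation}: the product $u_s u_t$ implements $\alpha_s \alpha_t = \alpha_{st}$ and satisfies both $J = (u_s u_t) J (u_s u_t)^{-1}$ and $u_s u_t \cal P = u_s \cal P = \cal P$, so the uniqueness clause of Theorem \ref{ImpCond} forces $u_{st} = u_s u_t$. Consequently, setting $\cal H_c = \{\xi \in \cal H : s \mapsto u_s \xi \text{ is norm continuous at } e\}$, the homomorphism property together with unitarity shows that continuity at $e$ propagates to all of $G$; moreover $\cal H_c$ is a norm-closed subspace (the $u_s$ being uniformly bounded), and since the self-dual cone is generating, i.e.\ $\spn_{\bbC} \cal P$ is dense in $\cal H$, it is enough to prove $\cal P \subseteq \cal H_c$.

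Fix $\xi \in \cal P$. The decisive input is the Araki--Haagerup estimate for standard forms, $\|\zeta - \eta\|^2 \le \|\omega_\zeta - \omega_\eta\|$ for $\zeta, \eta \in \cal P$, which is precisely what makes $\zeta \mapsto \omega_\zeta$ a homeomorphism of $\cal P$ onto $\cal M_*^+$ (see \cite{Ha75}). Applying it with $\zeta = u_s \xi \in \cal P$ and using $u_s^* x u_s = \alpha_{s^{-1}}(x)$ yields
\[
\|u_s \xi - \xi\|^2 \le \|\omega_{u_s \xi} - \omega_\xi\| = \|\omega_\xi \circ \alpha_{s^{-1}} - \omega_\xi\|,
\]
so everything reduces to showing that the predual orbit map $t \mapsto \omega_\xi \circ \alpha_t$ is norm continuous at $e$.

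This reduction is where $\sigma$-weak continuity is consumed, and I expect it to be the main obstacle, because that hypothesis delivers only \emph{weak} (that is, $\sigma(\cal M_*, \cal M)$-) continuity of the predual orbit maps, which must be upgraded to norm continuity. To do so I would run the standard smoothing argument on the isometric representation $\pi_t \omega := \omega \circ \alpha_{t^{-1}}$ of $G$ on $\cal M_*$: the subspace $(\cal M_*)_c$ of functionals with norm-continuous orbit map is norm closed, hence weakly closed by Mazur's theorem; for $f \in C_c(G)$ the average $\pi(f)\omega = \int_G f(t)\, \pi_t \omega \, dt$ lies in $\cal M_*$ (the integrand is weakly continuous with weakly compact range, so the Pettis integral stays in $\cal M_*$) and belongs to $(\cal M_*)_c$, since $\pi_h \pi(f)\omega = \pi(L_h f)\omega$ and left translation $h \mapsto L_h f$ is norm continuous on $\mathrm{L}^1(G)$; finally, running $f$ through an approximate identity shows that the $\pi(f)\omega$ approximate $\omega$ weakly. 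Thus $(\cal M_*)_c$ is a weakly closed, weakly dense subspace, whence $(\cal M_*)_c = \cal M_*$. In particular $t \mapsto \omega_\xi \circ \alpha_t$ is norm continuous at $e$, so $\|u_s \xi - \xi\| \to 0$, giving $\cal P \subseteq \cal H_c$ and hence the asserted strong continuity. (One may instead cite the equivalence of $\sigma$-weak and predual-norm continuity directly from \cite{Takesaki}.)
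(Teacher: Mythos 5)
Your argument is correct, and it is essentially the proof Haagerup gives for this statement: the paper itself offers no proof, simply citing \cite[Corollary 3.6]{Ha75}, whose argument likewise runs through the homomorphism property from the uniqueness clause of the implementation theorem, the Powers--St\o rmer-type estimate $\|\zeta-\eta\|^2\le\|\omega_\zeta-\omega_\eta\|$ on the self-dual cone, and the automatic upgrade from $\sigma(\cal M_*,\cal M)$-continuity to norm continuity of the predual orbit maps via smoothing against an approximate identity. All the delicate points (that $\cal P$ spans $\cal H$, that the weak topology on $\cal M_*$ is $\sigma(\cal M_*,\cal M)$ so Mazur applies, and that the vector-valued integral lands in $\cal M_*$) are handled correctly.
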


\section{Discrete Group Case}\label{SectionDiscrete}

Before tackling $G$-\wstar-dynamical systems for general locally compact groups $G$, we illustrate many of the concepts involved in the special case of discrete groups.  Much of this section is devoted to technical results about how automorphisms on standard forms interact with totally bounded elements.

\begin{lem}\label{UnitImpPresTotDiscrete}
    Let $(\cal M, \Phi)$ be a weighted von Neumann algebra together with an action $\alpha$ of $G$ with unitary implementation $s \mapsto u_s$ for a group $G$.  Suppose $x$ is totally $K$-bounded in $\cal M$. Then for all $s \in G$, $\alpha_{s}(x)$ is totally $K$-bounded.
\end{lem}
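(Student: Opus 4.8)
The plan is to exploit the fact that, in a $G$-\wstar-dynamical system, each $\alpha_s$ preserves the weight $\Phi$ and is therefore compatible with the modular data of $\Phi$; total $K$-boundedness is a condition on that modular data, so it is transported by $\alpha_s$. Write $\sigma^{\Phi}$ for the modular automorphism group of $\Phi$, so that $\sigma_t^{\Phi}(\cdot) = \Delta_{\Phi}^{it}\,\cdot\,\Delta_{\Phi}^{-it}$ on $\cal H_{\Phi}$, and recall that total $K$-boundedness of $x$ is expressed through (i) the entire analyticity of $z \mapsto \sigma_z^{\Phi}(x)$, (ii) operator-norm bounds on the analytic continuation $\sigma_z^{\Phi}(x)$, and (iii) the Hilbert-space norms of the associated vectors $\eta_{\Phi}(\sigma_z^{\Phi}(x))$ and $\eta_{\Phi}(\sigma_z^{\Phi}(x)^{*})$.

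First I would record the key commutation relation. Since $\alpha_s$ preserves $\Phi$, the automorphism $\alpha_s\sigma_t^{\Phi}\alpha_s^{-1}$ satisfies the KMS condition for $\Phi\circ\alpha_s^{-1} = \Phi$, so by uniqueness of the modular automorphism group $\alpha_s\circ\sigma_t^{\Phi} = \sigma_t^{\Phi}\circ\alpha_s$ for all real $t$. Passing to the canonical unitary implementation constructed after Theorem \ref{ImpCond} via $\eta_{\Phi}(y)\mapsto\eta_{\Phi}(\alpha_s(y))$, and using $\Delta_{\Phi}^{it}\eta_{\Phi}(y) = \eta_{\Phi}(\sigma_t^{\Phi}(y))$, this upgrades on the dense set $\eta_{\Phi}(\mathfrak{A}_{\Phi})$ to $u_s\Delta_{\Phi}^{it} = \Delta_{\Phi}^{it}u_s$; together with $u_sJ_{\Phi} = J_{\Phi}u_s$ and $u_s\cal P_{\Phi} = \cal P_{\Phi}$ from Theorem \ref{ImpCond}, the implementation intertwines the entire modular structure of $\Phi$ with itself. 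I would also note that $\alpha_s$ preserves $n_{\Phi}$, $m_{\Phi}$ and hence the left Hilbert algebra $\mathfrak{A}_{\Phi}$, and that $\eta_{\Phi}(\alpha_s(y)) = u_s\,\eta_{\Phi}(y)$.

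Next I would transfer the analyticity and the bounds. Because $\alpha_s$ is $\sigma$-weakly continuous and isometric, $z\mapsto\alpha_s(\sigma_z^{\Phi}(x))$ is an entire $\cal M$-valued function whose restriction to $\bbR$ equals $t\mapsto\sigma_t^{\Phi}(\alpha_s(x))$; hence $\alpha_s(x)$ is entire analytic for $\sigma^{\Phi}$ with $\sigma_z^{\Phi}(\alpha_s(x)) = \alpha_s(\sigma_z^{\Phi}(x))$ for every $z$. Then $\|\sigma_z^{\Phi}(\alpha_s(x))\| = \|\alpha_s(\sigma_z^{\Phi}(x))\| = \|\sigma_z^{\Phi}(x)\|$ since $\alpha_s$ is a $^*$-isomorphism, and $\|\eta_{\Phi}(\sigma_z^{\Phi}(\alpha_s(x)))\| = \|u_s\,\eta_{\Phi}(\sigma_z^{\Phi}(x))\| = \|\eta_{\Phi}(\sigma_z^{\Phi}(x))\|$ since $u_s$ is unitary; the identical computation on adjoints, via $\alpha_s(y)^{*} = \alpha_s(y^{*})$, handles the remaining norms. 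Every inequality defining total $K$-boundedness of $x$ therefore holds verbatim for $\alpha_s(x)$, which is the claim.

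The main obstacle, and the only place where the hypotheses are genuinely used, is the commutation $u_s\Delta_{\Phi}^{it} = \Delta_{\Phi}^{it}u_s$. This is exactly where weight-preservation of $\alpha$ is indispensable: for a general $^*$-automorphism the canonical implementation of Theorem \ref{ImpCond} still intertwines $J_{\Phi}$ and $\cal P_{\Phi}$, but it carries $\Delta_{\Phi}$ to the modular operator of $\Phi\circ\alpha_s^{-1}$, so the modular flow, and with it total $K$-boundedness, need not be preserved. I expect the write-up to spend most of its effort pinning down this commutation and the consequent identity $\eta_{\Phi}\circ\alpha_s = u_s\circ\eta_{\Phi}$ on analytic elements; once these are in hand the transfer of bounds is routine, and the same commutation would let $\alpha_s$ pass through any Gaussian (heat-semigroup) smoothing description of the totally $K$-bounded elements with no change to the argument.
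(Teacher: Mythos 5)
The central problem is that your proof targets a different definition of total $K$-boundedness than the one the paper (following \cite{AGHS} and \cite{Arul}) actually uses. There, $x$ is totally $K$-bounded when the four multiplication operators attached to $\eta_{\Phi}(x)$ and $\eta_{\Phi}(x^*)$ all have norm at most $K$: that is, $\|x\|\le K$, $\|x^*\|\le K$, $\|x\|_{\rt}\le K$ and $\|x^*\|_{\rt}\le K$, where $\|\cdot\|_{\rt}$ is the norm of the right-multiplication operator, an element of $\cal M'=J\cal M J$. No entire analyticity for the modular flow is required; right boundedness amounts only to a domain-and-bound condition at $\sigma_{-i/2}^{\Phi}$, and the entire analytic elements form a strictly smaller set. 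As written, your clauses (i)--(iii) describe a stronger property, so the argument does not literally cover an arbitrary totally $K$-bounded $x$; it would need to be rerun with the left/right-boundedness conditions in place of entire analyticity.

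The mechanism you isolate is otherwise sound, but the paper's proof is much more economical and bypasses the modular flow entirely. The left norms $\|\alpha_s(x)\|$ and $\|\alpha_s(x^*)\|$ are preserved simply because $\alpha_s$ is a $^*$-automorphism. For the right norms, one writes the right-multiplication operator of $\eta_{\Phi}(x)$ as $JyJ$ with $\|y\|=\|x\|_{\rt}$ and uses only the relation $u_sJ=Ju_s$ from Theorem \ref{ImpCond} to compute $u_s(JyJ)u_s^*=J(u_sy u_s^*)J$, whence $\|\alpha_s(x)\|_{\rt}=\|u_syu_s^*\|=\|y\|=\|x\|_{\rt}$, and similarly for $x^*$. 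In particular the commutation $u_s\Delta_{\Phi}^{it}=\Delta_{\Phi}^{it}u_s$, which you single out as the crux, is never needed; what is actually used is $u_sJ=Ju_s$ together with the identification $u_s\eta_{\Phi}(x)=\eta_{\Phi}(\alpha_s(x))$ (which is where weight-preservation quietly enters for both arguments). If you replace your analyticity clause by the correct boundedness conditions, your commutation of $\alpha_s$ with $\sigma^{\Phi}_{z}$ does yield an alternative derivation of the right-norm bound, but at the cost of domain bookkeeping for unbounded operators that the paper's short computation with $J$ avoids.
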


\begin{proof}
    Note that $\alpha_{s}(x) = u_{s} x u_{s}^* = \pi(u_{s} x)$.  It is clear that $\|\alpha_{s}(x)\| = \|x\|$ and $\|\alpha_{s}(x^*)\| = \|x^*\| = \|x\|$ since $u_{s}$ is a unitary.  Now since $J$ is an isometry from $\cal M$ to $\cal M'$, we have $\|JxJ\| = \|x\|_{\rt}$. So pick $y \in \cal M$ for which that $x = JyJ$ in other words, $\pi'(x) = \pi(Jy)$ and we have $\|y\| = \|x\|_{\rt}$. So
    \[
    u_{s} x u_{s}^* = u_{s} JyJ u_{s}^* = u_{s} u_{s}^* J u_{s} y u_{s}^* J u_{s} u_{s}^* = J u_{s} y u_{s}^* J.
    \]
    Now taking $\|\cdot\|_{\rt}$ of both sides, we have
    \[
    \|u_{s} x u_{s}^*\|_{\rt} = \|Ju_{s} y u_{s}^* J\|_{\rt} = \|u_{s} y u_{s}^*\| = \|y\|
    \]
    by the first part. A similar argument for $x^*$ finishes the proof.
\end{proof}

Recall that if $(\cal M, \Phi)$ is a weighted von Neumann algebra, we define the $S_K(\cal M)$ to be the set of all elements $x \in \cal M$ with $\|x\|_{\Phi} \leq K$ and $x$ is totally $K$-bounded.  Since the action of $G$ is $\Phi$-preserving, we have $\|\alpha_{s}(x)\|_{\Phi} = \|x\|_{\Phi}$.  Thus we have the following corollary.

\begin{cor}
    If $\alpha$ is a $\Phi$-preserving action of $G$ on $(\cal M, \Phi)$, then for every $s \in G$ and $n \in \bbN$, we have $\alpha_s(S_n) \subseteq S_n$.
\end{cor}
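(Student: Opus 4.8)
The plan is to unwind the definition of $S_n$ and check that each of its two defining conditions is individually preserved by $\alpha_s$. Recall that $x \in S_n$ means precisely that $x$ is totally $n$-bounded \emph{and} that $\|x\|_\Phi \leq n$. So it suffices to show that both of these properties transfer from $x$ to $\alpha_s(x)$, after which membership $\alpha_s(x) \in S_n$ is immediate and the inclusion $\alpha_s(S_n) \subseteq S_n$ follows.

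For the total boundedness half, I would simply invoke Lemma \ref{UnitImpPresTotDiscrete}, which asserts exactly that total $n$-boundedness is preserved under $\alpha_s$. For the Hilbert-space norm bound, I would use that $\alpha_s$ is a $^*$-automorphism together with the hypothesis that the action is $\Phi$-preserving. Since $\alpha_s(x)^*\alpha_s(x) = \alpha_s(x^*x)$, we compute
\[
\|\alpha_s(x)\|_\Phi^2 = \Phi\big(\alpha_s(x)^*\alpha_s(x)\big) = \Phi\big(\alpha_s(x^*x)\big) = \Phi(x^*x) = \|x\|_\Phi^2,
\]
where the third equality is $\Phi$-invariance; this also shows $\alpha_s(x)$ lands in the GNS domain, so the norm on the left is meaningful. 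Hence $\|\alpha_s(x)\|_\Phi = \|x\|_\Phi \leq n$, completing the second half.

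Since Lemma \ref{UnitImpPresTotDiscrete} already supplies the only genuinely nontrivial ingredient, there is no real obstacle here: the corollary is a bookkeeping consequence of that lemma together with $\Phi$-invariance. The single point that warrants care is that the definition of $S_n$ bundles two separate constraints, so both must be verified; the norm constraint is the easier half and drops out of the one-line computation above.
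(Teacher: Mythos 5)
Your proof is correct and matches the paper's reasoning exactly: the paper also deduces the corollary by combining Lemma \ref{UnitImpPresTotDiscrete} for preservation of total $n$-boundedness with the observation that $\Phi$-invariance of the action gives $\|\alpha_s(x)\|_\Phi = \|x\|_\Phi$. Your explicit computation of the norm identity just spells out what the paper states in one line.
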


Conversely,
\begin{lem}\label{uIsAuto}
    Assume $u$ is a unitary on $\cal H_{\Phi}$ such that $ux$ is totally $K$-bounded for all $x$ totally $K$-bounded and $\|ux\|_{\Phi} = \|x\|_{\Phi}$.  Assume furthermore that conjugation by $u$ preserves self-adjointness on totally bounded elements.  Then $u$ induces a $\Phi$-preserving $^*$-automorphism of $\cal M$.
\end{lem}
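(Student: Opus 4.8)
The plan is to prove that the promised automorphism is nothing but conjugation by $u$. Since $T \mapsto uTu^*$ is always a normal, injective $^*$-homomorphism of $B(\cal H_\Phi)$, the whole content of the statement is that $\Ad u$ carries $\cal M$ onto $\cal M$ and that the resulting automorphism is $\Phi$-preserving. Accordingly I would set $\alpha := (\Ad u)|_{\cal M}$ and reduce the lemma to the two assertions $u\cal M u^* = \cal M$ and $\Phi \circ \alpha = \Phi$.

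First I would establish the inclusion $u\cal M u^* \subseteq \cal M$. The hypothesis that conjugation by $u$ preserves self-adjointness on totally bounded elements gives that $\Ad u$ sends each self-adjoint totally bounded element of $\cal M$ back to a (self-adjoint) totally bounded element of $\cal M$; decomposing an arbitrary totally bounded $x$ as $x_1 + i x_2$ with $x_j = x_j^*$ totally bounded and using linearity of $\Ad u$, the same holds for every totally bounded element. As the totally bounded elements are $\sigma$-weakly dense in $\cal M$ and $\Ad u$ is $\sigma$-weakly continuous, the inclusion follows. This is exactly the place where the self-adjointness hypothesis is indispensable: it is what keeps the conjugate inside $\cal M$. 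Dropping it, a unitary such as a $45^{\circ}$ rotation of $\bbC^2 = \cal H$ would preserve totally bounded vectors and the $\| \cdot \|_\Phi$-norm while rotating the diagonal algebra onto a different subalgebra.

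The step I expect to be the main obstacle is upgrading this to the equality $u\cal M u^* = \cal M$, i.e.\ surjectivity of $\alpha$, since a unitary conjugation can in principle carry a von Neumann algebra properly into itself. The key extra input is that $u$ commutes with the modular conjugation $J_\Phi$. Reading the self-adjointness hypothesis on the Hilbert space, $u$ preserves the totally bounded vectors fixed by the Tomita involution $S_\Phi \colon \eta_\Phi(x) \mapsto \eta_\Phi(x^*)$, so $u$ commutes with $S_\Phi$ on a core; since $u$ is bounded this yields $u S_\Phi u^* = S_\Phi$, and uniqueness of the polar decomposition $S_\Phi = J_\Phi \Delta_\Phi^{1/2}$ (with $J_\Phi$ antiunitary and $\Delta_\Phi^{1/2} \ge 0$) forces $u J_\Phi u^* = J_\Phi$ and $u \Delta_\Phi u^* = \Delta_\Phi$. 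Now I would exploit $J_\Phi \cal M J_\Phi = \cal M'$ to turn the inclusion into its own reverse:
\[
u\cal M' u^* = J_\Phi (u\cal M u^*) J_\Phi \subseteq J_\Phi \cal M J_\Phi = \cal M',
\]
whereas taking commutants of $u\cal M u^* \subseteq \cal M$ gives $u\cal M' u^* = (u\cal M u^*)' \supseteq \cal M'$. Hence $u\cal M' u^* = \cal M'$, and taking commutants once more yields $u\cal M u^* = \cal M$. Thus $\alpha$ is a genuine $^*$-automorphism of $\cal M$, implemented by $u$ by construction.

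Finally I would verify $\Phi$-invariance. For totally bounded $x$, the hypothesis that $ux = u\eta_\Phi(x)$ is again totally bounded provides a totally bounded $y \in \cal M$ with $u\eta_\Phi(x) = \eta_\Phi(y)$, and a short computation with a right approximate identity for the left Hilbert algebra (using that $u$ now normalizes both $\cal M$ and $\cal M'$) identifies $y = u x u^* = \alpha(x)$; that is, $u$ intertwines the embeddings, $u\eta_\Phi(x) = \eta_\Phi(\alpha(x))$. Combined with the norm hypothesis $\| u\eta_\Phi(x) \|_\Phi = \| \eta_\Phi(x) \|_\Phi$, this gives $\Phi(\alpha(x)^* \alpha(x)) = \Phi(x^* x)$ on a $\sigma$-weakly dense domain, and hence $\Phi \circ \alpha = \Phi$ by normality. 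Therefore $\alpha$ is a $\Phi$-preserving $^*$-automorphism of $\cal M$ with unitary implementation $u$, as desired.
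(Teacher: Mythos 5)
Your overall architecture (show $\Ad u$ maps $\cal M$ into $\cal M$, get $uJ_\Phi u^*=J_\Phi$, then use $J_\Phi\cal M J_\Phi=\cal M'$ and a commutant argument to upgrade the inclusion to equality) is a genuinely different route from the paper, which never works with $\Ad u$ directly: it shows that $u$ preserves the closed real subspace $\cal K$ spanned by the self-adjoint totally bounded \emph{vectors}, deduces from the Haagerup--Skau/Rieffel--van Daele geometry that $u$ preserves the cone $\cal P$ and commutes with $J$, and then invokes Haagerup's characterization (Theorem~\ref{ImpCond}) to conclude in one stroke that $u$ implements a $^*$-automorphism. Your commutant trick for surjectivity is correct and is a nice replacement for that citation.

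The genuine gap is in your very first step. All of the hypotheses of the lemma are statements about the action of $u$ on the GNS vectors $\eta_\Phi(x)$: ``$ux$ is totally $K$-bounded'' means the vector $u\eta_\Phi(x)$ is again a bounded vector, and the self-adjointness hypothesis means that this vector represents a self-adjoint element (read literally as a statement about the operator map $T\mapsto uTu^*$ the hypothesis is vacuous, since conjugation by any unitary preserves self-adjointness of operators). From these vector-level hypotheses you cannot conclude that $uxu^*\in\cal M$ for totally bounded $x$: a priori there is no relation between the operator $\pi\bigl(u\eta_\Phi(x)\bigr)$ attached to the vector $u\eta_\Phi(x)$ and the operator $uxu^*$, and establishing such a relation is essentially the content of the lemma. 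So the inclusion $u\cal M u^*\subseteq\cal M$, which everything downstream rests on, is asserted rather than proved; the paper's detour through $\cal K$, $\cal P$ and Theorem~\ref{ImpCond} exists precisely to bridge this vector-to-operator gap. A secondary, more minor issue: the intertwining identity $u\eta_\Phi(x)=\eta_\Phi(uxu^*)$ you use for $\Phi$-invariance is not forced by the hypotheses --- replacing $u$ by $-u$ (or by $uz$ for a self-adjoint central unitary $z$) preserves every hypothesis but flips the sign of that identity. Only the norm equality $\|u\eta_\Phi(x)\|_\Phi=\|\eta_\Phi(\alpha(x))\|_\Phi$ survives, which fortunately is all you need, but the claim as written is false. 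Your $uS_\Phi u^*=S_\Phi$ computation, by contrast, does use the correct (vector-level) reading of the hypothesis and is sound modulo the routine verification that $u(\cal M_{\tb})$ remains a core for $S_\Phi$.
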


\begin{proof}
    Let $u$ be a unitary satisfying the assumptions in the lemma. $u$ is injective and surjective on $\cal H$ because it is a unitary.  By density of $\cal M_{\tb}$ in $\cal M$, $u$ fixes $\cal M$ set-wise. Similarly, by density of $\cal M_{\tb}$ in $\cal M'$, $u$ fixes $\cal M'$ set-wise.  Furthermore, by assumption, conjugation by $u$ preserves the set of self-adjoint totally bounded elements.  Therefore $u$ preserves self-adjoint elements of $\cal M$, and therefore also the closure $\cal K$ in $\cal H_{\Phi}$ of the self-adjoint elements of $\cal M$.  Therefore because $u$ is unitary, we see that $u$ preserves $i\cal K$, $\cal K^\perp$ and $i\cal K^\perp$.  By \cite{HaagerupSkau}, $\cal P$ is the bisector of $\cal K$ and $i\cal K^\perp$, therefore $u$ preserves $\cal P$.  By the same, $J$ is the reflection across $\cal P$ and therefore $J = uJu^*$.   Thus $u$ implements a $^*$-automorphism by Theorem \ref{ImpCond}.  The $\Phi$-preservation clause is immediate by the assumption that $\|ux\|_{\Phi} = \|x\|_{\Phi}$.
\end{proof}

At this point, we have everything we need to axiomatize $G$-\wstar-dynamical systems in the case that $G$ is discrete.

Let $G$ be a fixed discrete group. We define the language $\cal L_{G-\vNa}$ to be the language $\cal L_{\vNa}$ from \cite{Arul}, together with function symbols $\alpha_{s, n}: S_n \to S_n$ for every $s \in G$ and $n \in \bbN$.  Write $\ip{x}{x}_{\Phi}$ as a shorthand for $\Phi(y^*x)$.  Axiomatize the theory $T_{G-\vNa}$ by the axioms of $T_{\vNa}$ from \cite{Arul} together with, for every $s, r \in G$, $n \in \bbN$ and $\lambda \in \bbC$:
\begin{enumerate}
    \item $\sup_{x, y \in S_n} d_{2n}(\alpha_{s, n}(x) + \alpha_{s, n}(y), \alpha_{s, 2n}(x+y))$;
    \item $\sup_{x, y \in S_n} d_{n^2}(\alpha_{s, n}(x)\alpha_{s, n}(y), \alpha_{s, 2n}(xy))$;
    \item $\sup_{x \in S_n} d_{n}(\alpha_{s, n}(x)^*, \alpha_{s, n}(x^*))$;
    \item $\sup_{x \in S_n} d_{n\lceil\lambda\rceil}(\lambda\alpha_{s, n}(x)^*, \alpha_{s, n\lceil\lambda\rceil}(\lambda x))$;
    \item $\sup_{x \in S_n} d_{n^2}(\alpha_{s}(\alpha_{r}(x)), \alpha_{sr}(x))$; and
    \item $\sup_{x,y \in S_n} |\ip{\alpha_{s,n}(x)}{\alpha_{s,n}(y)}_{\Phi} - \ip{x}{y}_{\Phi}|$.
\end{enumerate}

\begin{prop}
    For $G$ a discrete group, models of $\cal T_{G-\vNa}$ as above precisely the class of $G$-\wstar-dynamical systems.
\end{prop}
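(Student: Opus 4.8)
The statement is a biconditional, so the plan is to establish the two directions separately, leaning throughout on \cite{Arul}: the reduct of any $\cal L_{G-\vNa}$-structure to $\cal L_{\vNa}$ is an $\cal L_{\vNa}$-structure, and models of $T_{\vNa}$ are exactly the weighted von Neumann algebras $(\cal M,\Phi)$, presented via their canonical standard form $(\cal M,\cal H_{\Phi},J_{\Phi},\cal P_{\Phi})$ with the sorts $S_n$ enumerating totally bounded elements, which are dense in $\cal M$ and whose images are dense in $\cal H_{\Phi}$.

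For the forward direction I would start with a $G$-\wstar-dynamical system $(\cal M,\Phi,\alpha)$. Then $(\cal M,\Phi)\models T_{\vNa}$, so it remains only to interpret the symbols $\alpha_{s,n}$ and to check axioms (1)--(6). I interpret $\alpha_{s,n}$ as the restriction of $\alpha_s$ to $S_n$; this is well defined because $\alpha_s(S_n)\subseteq S_n$, which is exactly the corollary to Lemma~\ref{UnitImpPresTotDiscrete} (combining preservation of total $K$-boundedness with $\|\alpha_s(x)\|_{\Phi}=\|x\|_{\Phi}$). Axioms (1)--(4) then express that $\alpha_s$ is additive, multiplicative, $*$-preserving and homogeneous, all of which hold exactly since $\alpha_s$ is a $*$-automorphism; the varying subscripts only record which sort contains the output, so each supremum is identically $0$. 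Axiom (5) is the identity $\alpha_s\circ\alpha_r=\alpha_{sr}$, and axiom (6), inner-product preservation, follows from $\Phi$-invariance since $\Phi(\alpha_s(y^*x))=\Phi(y^*x)$ gives $\ip{\alpha_s(x)}{\alpha_s(y)}_{\Phi}=\ip{x}{y}_{\Phi}$. This direction is thus essentially bookkeeping.

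For the converse, let $\cal N\models\cal T_{G-\vNa}$ with reduct $(\cal M,\Phi)$; the task is to reassemble $\{\alpha_{s,n}\}_n$ into a single $\Phi$-preserving $*$-automorphism $\alpha_s$ and to verify the action axiom. By axiom (6) the assignment $\eta_{\Phi}(x)\mapsto\eta_{\Phi}(\alpha_{s,n}(x))$ is linear and $\Phi$-isometric on totally bounded elements, so by density it extends to an isometry $u_s$ of $\cal H_{\Phi}$. Specializing (5) and (6) to $s=e$ shows $\alpha_e$ is a linear idempotent $\Phi$-isometry, hence $u_e=\id$; then (5) gives $u_{s^{-1}}u_s=u_s u_{s^{-1}}=\id$, so every $u_s$ is surjective and therefore unitary. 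Next, from multiplicativity (axiom (2)) and the formulas $\pi(a)\eta_{\Phi}(b)=\eta_{\Phi}(ab)$ and $\pi'(a)\eta_{\Phi}(b)=\eta_{\Phi}(ba)$ I would derive the conjugation relations $\pi(\alpha_s(x))=u_s\pi(x)u_s^*$ and $\pi'(\alpha_s(x))=u_s\pi'(x)u_s^*$ on totally bounded elements; together with axiom (3) these force $u_s$ to carry totally $K$-bounded elements to totally $K$-bounded elements \emph{with the same $K$}. All hypotheses of Lemma~\ref{uIsAuto} are then in place, so $u_s$ induces a $\Phi$-preserving $*$-automorphism $\alpha_s$ of $\cal M$; axiom (5) promotes $s\mapsto\alpha_s$ to a $G$-action, and since $G$ is discrete strong continuity is automatic. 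Hence $(\cal M,\Phi,\alpha)$ is a $G$-\wstar-dynamical system.

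The main obstacle is this converse passage from the sort-by-sort symbols to a genuine automorphism, where Lemma~\ref{uIsAuto} (and, through it, Theorem~\ref{ImpCond}) does the real work. The delicate points are that $u_s$ is honestly \emph{unitary} (surjectivity, extracted from the action axiom at $s=e$ and $s^{-1}$) and that it preserves total $K$-boundedness \emph{with the same} $K$: the latter does not follow formally from $\alpha_{s,n}(S_n)\subseteq S_n$, which a priori only yields a worse bound once the $\Phi$-norm is large, and instead requires the operator-norm control extracted from the conjugation relations above. Once these two points are secured, invoking Lemma~\ref{uIsAuto} closes the argument.
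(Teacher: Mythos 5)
Your proposal is correct and follows essentially the same route as the paper: both directions reduce to the corollary of Lemma~\ref{UnitImpPresTotDiscrete} (for interpreting the $\alpha_{s,n}$) and to Lemma~\ref{uIsAuto} applied to the unitary extracted from axiom (6) via density of the totally bounded elements. You are in fact more careful than the paper's own proof on two points it elides — surjectivity of $u_s$ via the action axiom at $s^{-1}$, and preservation of total $K$-boundedness with the \emph{same} $K$ via the conjugation relations $\pi(\alpha_s(x))=u_s\pi(x)u_s^*$ — both of which are genuinely needed to invoke Lemma~\ref{uIsAuto}.
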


\begin{proof}
    By \cite{Arul}, a model of $T_{\vNa}$ is a von Neumann algebras equipped with faithful normal semifinite weight $(\cal M, \Phi)$. Since it is the semicyclic representation being axiomatized, this is, in fact, in standard form.  Axioms (1)-(4) above express that for $s \in G$, $\alpha_{s}$ is a $^*$-homomorphism on the totally bounded elements.  Axiom (5) says that the map $s \mapsto \alpha_{s}$ is a homomorphism from $G$.  Axiom (6) says that $\alpha_s$ is implemented by a unitary on the underlying Hilbert space (the condition is only prima facie imposed on the totally bounded elements, but since these are $\|\cdot\|_{\Phi}$ dense on $\cal H$, it holds on all of $\cal H$ by continuity).  This spatial implementation of $\alpha_s$ together with the strong density of $\cal M_{\tb}$ in $\cal M$ implies that $\alpha_s$ is actually a $*$-automorphism of $\cal M$ by Lemma \ref{uIsAuto}.  Since every action of a discrete group is continuous, we are done. 
\end{proof}

If we add axioms that say that $\cal M$ is abelian, then we obtain a model-theoretic treatment of measure-preserving actions of $G$ on standard measure spaces by the duality theorem for abelian von Neumann algebras.  If we furthermore enforce that $\|1\|_{\Phi} = 1$, then we obtain probability measure-preserving (p.m.p.) $G$-actions.  The model theory of p.m.p. actions by discrete groups $G$ has been studied in \cite{IT}, \cite{GST}, and \cite{BHI}.  The latter two papers use a different approach using measure algebras, but \cite{IT} does on occasion use a formalism similar to ours.  It would be interesting to recover the results of these papers in our setting and examine the extent to which they extend to more general $G$-\wstar-dynamical systems.

\section{Locally Compact Hausdorff Group Case}\label{SectionLCG}

While the axiomatization in the previous section is interesting in itself, our broader aim is the case of actions by a general locally compact Hausdorff groups $G$.  For this, we need a way to express continuous representations of locally compact Hausdorff groups.  Thankfully, this is precisely what is done by Ben Yaacov-Goldbring in \cite{BYG}.  Because of this, we may emulate the approach therein.  We briefly describe their approach.

Given a locally compact Hausdorff group $G$ and a Hilbert space $\cal H$, there is a one-to-one correspondence between continuous unitary representations of $G$ on $H$ and non-degenerate $^*$-homomorphisms from $\mathrm{L}^1(G)$ to $B(\cal H)$. The non-degenerate $^*$-representation, also denote $\alpha$ on $\mathrm{L}^1(G)$ on $B(\cal H)$ corresponding to a given a continuous unitary representation $\alpha : G \to U(\cal H)$ is defined as follows:
\[
\alpha_{f} := \int_{G} f(s)\alpha_{s}\mathrm{d}s \qquad \text{for each } f \in \mathrm{L}^1(G).
\]
Goldbring and Ben Yaacov then show that non-degenerate $^*$-representations $\alpha$ of $\mathrm{L}^1(G)$ on a Hilbert space can be axiomatized by considering as sorts $S_{f}$ which represent $\overline{\alpha_{f}({\cal H}_{\leq 1}})$; that is, the closure of the image of the unit ball $\cal H_{\leq 1}$ of $\cal H$ under the operator $\alpha_{f}$.

We take a moment to remind the reader that we do not want to axiomatize any arbitrary representation of $G$ on $\cal H_{\Phi}$. We specifically want those with unitary implementations $u_s$ such that $J = u_s J u_s^*$ and $u_s\cal P = \cal P$ for all $s \in G$ or, equivalently, which induce an automorphism of $\cal M$. We see below how to add axioms to ensure these conditions. For now, we need the following lemma. 

\begin{lem}\label{UnitImpPresTot}
    Let $(\cal M, \Phi)$ be a weighted von Neumann algebra together with a continuous action $s \mapsto \alpha_s$ of a group $G$ on $\cal M$.  Denote by $u_s$ the induced unitary implementation.  Denote by $\alpha_{f}$ the induced action of $f \in \mathrm{L}^1(G)$ on $\cal H_{\Phi}$.  Suppose $x \in \cal M$ such that $\eta_{\Phi}(x)$ is totally $K$-bounded. Then for all $f \in \mathrm{L}^1(G)$, we have that $\alpha_{f}(\eta_{\Phi}(x))$ is totally $K\|f\|_1$-bounded.
\end{lem}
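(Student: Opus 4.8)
The plan is to realize $\alpha_f\eta_\Phi(x)$ as a \emph{continuous average} of the vectors $u_s\eta_\Phi(x)$, each of which is already known to be totally $K$-bounded, and then to exploit the convexity, balancedness, and closedness of the set of totally $K$-bounded vectors in order to control the average. Write $B_K \subseteq \cal H_\Phi$ for the set of totally $K$-bounded vectors.

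First I would record the pointwise input. By construction of the unitary implementation one has $u_s\eta_\Phi(x) = \eta_\Phi(\alpha_s(x))$, and since $s\mapsto\alpha_s$ is a $\Phi$-preserving action whose implementation is strongly continuous (by the Corollary to Theorem \ref{ImpCond}), the argument of Lemma \ref{UnitImpPresTotDiscrete} applies verbatim for each fixed $s$: the vector $u_s\eta_\Phi(x)$ lies in $B_K$, with the four governing norms $\|\cdot\|$, $\|\cdot\|_{\rt}$ and those of the images under the involution bounded by $K$ \emph{uniformly} in $s$. Moreover $\|u_s\eta_\Phi(x)\|_\Phi = \|\eta_\Phi(x)\|_\Phi$ is constant in $s$, so $s\mapsto u_s\eta_\Phi(x)$ is a bounded continuous $\cal H_\Phi$-valued function, hence Bochner integrable against the finite measure $|f(s)|\,\mathrm ds$, and $\alpha_f\eta_\Phi(x) = \int_G f(s)\,u_s\eta_\Phi(x)\,\mathrm ds$ makes sense as a Bochner integral.

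Next I would establish the geometry of $B_K$. Balancedness and convexity are immediate from the (anti)linearity of $\xi\mapsto\pi(\xi)$ and $\xi\mapsto\pi'(\xi)$ together with the subadditivity and absolute homogeneity of the operator norms; the involution is conjugate-linear, which is precisely what is compatible with real convex combinations and with scaling by $|\lambda|\le 1$. The point requiring care is \emph{closedness}: if $\xi_n\to\xi$ in $\cal H_\Phi$ with each $\xi_n\in B_K$, then for any right-bounded $\eta$ one has $\pi(\xi_n)\eta = \pi'(\eta)\xi_n \to \pi'(\eta)\xi$, so the weak limit of the uniformly bounded net $\pi(\xi_n)$ exhibits $\xi$ as left bounded with $\|\pi(\xi)\|\le K$; the same limiting argument, using that the involution is a \emph{closed} operator, handles the remaining three clauses. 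I expect this to be the main technical friction, since it forces one to unwind the definition of total boundedness and to verify that each of its four norm conditions survives under weak operator limits.

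Finally I would combine these ingredients. On the set where $f(s)\neq 0$ put $\theta(s) = \tfrac{f(s)}{|f(s)|}\,u_s\eta_\Phi(x)$; balancedness gives $\theta(s)\in B_K$, and the pushforward of the probability measure $\|f\|_1^{-1}|f(s)|\,\mathrm ds$ under $\theta$ is a probability measure supported on $B_K$ whose barycenter is exactly $\|f\|_1^{-1}\int_G f(s)\,u_s\eta_\Phi(x)\,\mathrm ds = \|f\|_1^{-1}\,\alpha_f\eta_\Phi(x)$. Since $B_K$ is closed and convex, its barycenter lies in $B_K$ (a standard Hahn--Banach separation argument, valid here by the Bochner integrability established above). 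Rescaling by the absolute homogeneity of the defining norms then yields $\alpha_f\eta_\Phi(x)\in\|f\|_1\,B_K = B_{K\|f\|_1}$, which is the claim. As an alternative to the barycenter step one could instead exhibit the left operator of $\alpha_f\eta_\Phi(x)$ directly as the $\sigma$-weak integral $\int_G f(s)\,\alpha_s(x)\,\mathrm ds\in\cal M$, whose norm is at once $\le K\|f\|_1$, and argue symmetrically for the right operator and the involution; this is more computational but sidesteps barycenters entirely.
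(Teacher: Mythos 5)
Your closing ``alternative'' --- exhibiting the left operator of $\alpha_f\eta_\Phi(x)$ directly as the norm-convergent integral $\int_G f(s)\,\alpha_s(x)\,ds$, bounding it by $K\|f\|_1$ via the triangle inequality for vector-valued integrals, and repeating the argument for $\pi'$ and the involution --- is precisely the paper's proof (the paper phrases the estimate as an application of dominated convergence and cites Lemma \ref{UnitImpPresTotDiscrete} for the pointwise bound, exactly as you do). Your primary route is genuinely different: rather than estimating the four operators directly, you package total $K$-boundedness into a set $B_K$, prove it is convex, balanced and closed, and invoke a barycenter/Hahn--Banach argument. What that buys is a reusable geometric fact about $B_K$ and a proof that works for any $\cal H_\Phi$-valued Bochner integrable average of totally bounded vectors, not just those coming from a group action; what it costs is that the entire burden shifts onto the closedness of $B_K$, which is heavier than anything the paper needs.

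And that is where there is a real gap. For the two ``plain'' clauses your weak-limit argument is fine: $\pi(\xi_n)\eta=\pi'(\eta)\xi_n\to\pi'(\eta)\xi$ on right-bounded $\eta$, plus the uniform bound, gives $\|\pi(\xi)\|\le K$. But for the involution clauses, closedness of $S=J\Delta^{1/2}$ only helps if you already know that $S\xi_n$ converges (or at least is $\|\cdot\|_\Phi$-bounded along a subnet), and neither follows from $\xi_n\to\xi$ together with the operator-norm bounds $\|\pi(S\xi_n)\|,\|\pi'(S\xi_n)\|\le K$: in the non-tracial case a left operator-norm bound on $S\xi_n$ does not control $\|S\xi_n\|_\Phi$. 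So ``$B_K$ is closed'' is not established as stated. The argument is repairable without proving that general fact, because for the specific approximating net you use (Riemann sums for $\int_G f(s)u_s\eta_\Phi(x)\,ds$) one has $S\bigl(\sum_j c_j u_{s_j}\eta_\Phi(x)\bigr)=\sum_j \overline{c_j}\,u_{s_j}\eta_\Phi(x^*)$, which converges to $\alpha_{\bar f}(\eta_\Phi(x^*))$; closedness of $S$ then applies and the integral estimate finishes the involution clauses. At that point, however, you have essentially reverted to the computational proof, which is why the paper takes that route from the start.
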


\begin{proof}
    By Lemma \ref{UnitImpPresTotDiscrete}, we have that $u_s x$ is totally $K$-bounded for all $s \in G$.  For $f \in \mathrm{L}^1(G)$ given, it is a simple calculation that 
    \[
    \alpha_{f}(x) = \pi(\alpha_{f}(\eta_{\Phi}(x))) = \pi\left(\int_G f(s)u_s\eta_{\Phi}(x)\mathrm{d}s\right).
    \]
    It is then a simple application of the Dominated Convergence Theorem that $\|\alpha_{f}(x)\| \leq K\|f\|_{1}$, so that $\alpha_{f}(\eta_{\Phi}(x))$ is left $K\|f\|_{1}$-bounded.

    Similarly, 
    \[
    \pi'(\alpha_{f}(\eta_{\Phi}(x))) = \pi'\left(\int_G f(s)\eta_{\Phi}(x)u_s^*\mathrm{d}s \right).
    \]
    Thus we make the same argument for the right bounds. 
\end{proof}

Note that a similar argument shows $\|\alpha_{f}(\eta_{\Phi}(x))\|_{\Phi} \leq \|f\|_{1} \|x\|_{\Phi}$.

We can now begin to present the language of $G$-\wstar-dynamical systems for a general locally compact Hausdorff group. Let $G$ be a locally compact Hausdorff group. Then the language $\cal L_{G-\vNa}$ consists of the language $\cal L_{\vNa}$ as in \cite{Arul}, together with:
\begin{itemize}
    \item sorts $S_{f, n}$ of radius $\|f\|_{1}n$ for every $f \in \mathrm{L}^1(G)$ and $n \in \bbN$, representing the image $\alpha_{f}(S_n)$ of $S_n$ under the representation of $f$;
    \item function symbols $\pi_{h,n,f}: S_{f,n} \to S_{hf,n}$ for every $f,h \in \mathrm{L}^1(G)$ and $n \in \bbN$ representing the action of $\alpha_{h}$ on $S_{f, n}$; and
    \item inclusion functions $\eta_{f, n}: S_{f,n} \to S_{n}$ for every $f \in \mathrm{L}^1(G)$ with $|f(s)| \leq 1$ for all $s \in G$.   
\end{itemize}

We take $T_{G-\vNa}$ to consist of the axioms of $T_{\vNa}$ given in \cite{Arul} together with:
\begin{enumerate}
    \item axioms saying that the map $f \mapsto \pi_{f}$ is a $^*$-homomorphism (these are simple, but tedious, to write out and so are omitted here); 
    \item axioms $\sup_{x \in S_{hf,n}} \inf_{y \in S_{f,n}} \|\pi_{h,n,f}(y) - x\|_{\Phi}$, which say the images of $\pi_{h,n,f}$ are $\|\cdot\|_{\Phi}$-dense in their codomains;
    \item axioms saying that $\pi_{h,n,f}$ has norm at most $\|h\|_1$ as an operator on the underlying real Hilbert space of $\cal H_{\Phi}$ for every $f \in \mathrm{L}^1(G)$, and $n \in \bbN$;
    \item axioms $\sup_{x \in S_{f,n}}\inf_{y \in S_{h, n}} \|x-y\|_{\Phi} \leq n\|f - h\|_{1}$ saying that the sorts have the correct Hausdorff distances.
\end{enumerate}

\begin{prop}
    For $G$ a locally compact group, models of $\cal T_{G-\vNa}$ as above precisely the class of $G$-\wstar-dynamical systems.
\end{prop}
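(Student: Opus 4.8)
The plan is to prove both inclusions, following the template of the discrete-case proposition but routing everything through the Ben Yaacov--Goldbring axiomatization of continuous unitary representations via $\mathrm{L}^1(G)$. The forward direction, that every $G$-\wstar-dynamical system is a model, is essentially a verification of the axioms against the canonical standard form; the reverse direction, that every model is such a system, is where the real content lies, because the axioms only constrain the operators $\pi_f$ for $f \in \mathrm{L}^1(G)$, and one must descend to the individual implementing unitaries $u_s$ to recognize genuine $\Phi$-preserving automorphisms.

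For the forward direction I would begin with a $G$-\wstar-dynamical system $(\cal M, \Phi, \alpha)$, pass to its canonical standard form $(\cal M_\Phi, \cal H_\Phi, J_\Phi, \cal P_\Phi)$ as in Section \ref{SectionBackground}, and invoke \cite[Corollary 3.6]{Ha75} to conclude that the canonical unitary implementation $s \mapsto u_s$ is strongly continuous. The Ben Yaacov--Goldbring correspondence \cite{BYG} then produces the non-degenerate $^*$-representation $\alpha_f = \int_G f(s) u_s\, \mathrm{d}s$ of $\mathrm{L}^1(G)$ on $\cal H_\Phi$. I interpret each sort $S_{f,n}$ as $\overline{\alpha_f(S_n)}$, each $\pi_{h,n,f}$ as $\alpha_h$, and the inclusions $\eta_{f,n}$ in the evident way. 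The radius claim and the fact that these sets really consist of totally $\|f\|_1 n$-bounded elements are exactly Lemma \ref{UnitImpPresTot} and the remark following it; axioms (1) hold because $f \mapsto \alpha_f$ is a $^*$-homomorphism by \cite{BYG}; axioms (3) follow from the estimate $\|\alpha_h v\|_\Phi \le \|h\|_1 \|v\|_\Phi$; axioms (2) follow from $\alpha_{hf} = \alpha_h \alpha_f$ together with continuity of $\alpha_h$; and axioms (4) reduce to $\|\alpha_f v - \alpha_h v\|_\Phi = \|\alpha_{f-h} v\|_\Phi \le n\|f-h\|_1$ on the dense set $v \in S_n$, extended by continuity using the remark after Lemma \ref{UnitImpPresTot}.

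For the reverse direction, given a model I first extract from the $\cal L_{\vNa}$-reduct a weighted von Neumann algebra $(\cal M, \Phi)$ realized in its standard form, using \cite{Arul}. Axioms (1) make $f \mapsto \pi_f$ a $^*$-representation of $\mathrm{L}^1(G)$ on $\cal H_\Phi$; the density axioms (2), together with the coherence of the sort system as in \cite{BYG}, yield non-degeneracy, so the correspondence returns a strongly continuous unitary representation $s \mapsto u_s$ with $\pi_f = \int_G f(s) u_s\, \mathrm{d}s$. It then remains to show that each $u_s$ implements a $\Phi$-preserving $^*$-automorphism $\alpha_s$ of $\cal M$, and the tool for this is Lemma \ref{uIsAuto}. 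Concretely, I must verify for each $u_s$ the three hypotheses of that lemma: that $u_s$ carries totally $K$-bounded elements to totally $K$-bounded elements, that it preserves $\|\cdot\|_\Phi$, and that conjugation by $u_s$ preserves self-adjointness of totally bounded elements.

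The main obstacle is precisely this descent from the $\mathrm{L}^1(G)$-level data to the point evaluations $u_s$, since the hypotheses of Lemma \ref{uIsAuto} are visible in the axioms only through the $\pi_f$. I would fix $s \in G$ and an approximate identity $(f_\lambda)$ at the identity, normalized so that $\|f_\lambda\|_1 = 1$; writing $L_s$ for left translation and using invariance of Haar measure one gets $\pi_{L_s f_\lambda} = u_s \pi_{f_\lambda} \to u_s$ strongly with $\|L_s f_\lambda\|_1 = 1$ throughout. For $x \in S_n$, each vector $\pi_{L_s f_\lambda}(\eta_\Phi(x))$ lies in the sort $S_{L_s f_\lambda, n}$ and is therefore totally $n$-bounded with $\|\cdot\|_\Phi \le n$; since $S_n$ is $\|\cdot\|_\Phi$-closed in $\cal H_\Phi$ (being a complete sort in the sense of \cite{Arul}), the limit $u_s \eta_\Phi(x)$ again lies in $S_n$, which supplies the total-boundedness clause, while $\|u_s \eta_\Phi(x)\|_\Phi = \lim_\lambda \|\pi_{L_s f_\lambda}(\eta_\Phi(x))\|_\Phi \le n$ supplies norm-preservation. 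The delicate clause is self-adjointness: here I would use the identity $u_s x u_s^* = \pi(u_s \eta_\Phi(x))$ from the proof of Lemma \ref{UnitImpPresTotDiscrete} to pass between operator conjugation and the vector action, combine it with the symmetric treatment of right bounds in Lemma \ref{UnitImpPresTot} to see that $u_s$ normalizes both $\cal M$ and $\cal M'$, and invoke closedness of the self-adjoint totally bounded elements in $\cal H_\Phi$ to conclude that $u_s$ preserves the real subspace $\cal K$. Reconciling the vector-action and operator-conjugation pictures through the strong limit is the genuinely subtle bookkeeping, but once clauses hold, Lemma \ref{uIsAuto} yields a $\Phi$-preserving automorphism $\alpha_s$ for each $s$; that $s \mapsto \alpha_s$ is a strongly continuous action is then immediate from the homomorphism and continuity content of the Ben Yaacov--Goldbring correspondence, completing the identification.
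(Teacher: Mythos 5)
Your proposal is correct and follows essentially the same route as the paper: use the Ben Yaacov--Goldbring correspondence to turn axioms (1)--(4) into a non-degenerate $^*$-representation of $\mathrm{L}^1(G)$, descend to the implementing unitaries $u_s$, and verify that each preserves $\cal M$, $\cal M'$, and the real subspace $\cal K$ so that $u_sJu_s^*=J$, $u_s\cal P=\cal P$, and Theorem \ref{ImpCond} applies --- which is exactly what your appeal to Lemma \ref{uIsAuto} packages and what the paper inlines via \cite[Proposition 2.3]{RvD}. Your approximate-identity argument making the descent from the $\pi_f$ to the individual $u_s$ explicit is a detail the paper leaves implicit, but it is the same strategy, not a different one.
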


\begin{proof}
    By the arguments in \cite{BYG}, we see that axioms (1)-(4) ensure that $f \mapsto \pi_{f}$ defines a non-degenerate $^*$-representation on $\cal M_{\tb}$, where $\pi_{f}$ represents the map obtained from gluing together the various $\pi_{f,n,h}$.  Since $\cal M_{\tb}$ is $\|\cdot\|_{\Phi}$-dense in $\cal H_{\Phi}$, every unitary $u$ in this representation fixes $\cal M$.  By preservation of right-norms and the density of $\cal M_{\tb}$ in $\cal M'$, $u$ fixes $\cal M'$ as well.  Axiom (6) implies that $\cal K$, the closure of the set of vectors representing self-adjoint elements of $\cal M$, is fixed under $u$. Since $u$ is unitary, this implies $u$ also fixes $i\cal K$, $\cal K^\perp$ and $i\cal K^\perp$.  Therefore, by \cite[Proposition 2.3]{RvD}, we have $uJu^* = J$.  Finally, by \cite{Ha75}, $\cal P = \overline{\{v(Jv) \ : \ v \in \eta_{\Phi}(\cal M)\}}$, so since $u$ fixes $\cal M$ and $J$, $u$ fixes $\cal P$. Thus, $u$ induces a $^*$-automorphism of $\cal M$ by Theorem \ref{ImpCond}.
\end{proof}

\section{Standard Forms of Crossed Products}\label{SectionCrossedBackground}

We use this section to remind the reader of (or introduce the reader to) the theory of crossed products of von Neumann algebras and their standard forms as in \cite{HaDual1} and \cite{HaDual2}, as well as to fix notations.  We also give an overview of duality for crossed products.  We need all of these topics in Section \ref{SectionUltraproducts}.

\subsection{Standard Forms and Dual Weights}

Let $G$ be a locally compact group with left-invariant Haar measure $\mu$ with modulus $\Delta_G$.  Define the Hilbert space $\mathrm{L}^2(G)$ of square measurable complex functions with respect to $\mu$.  Recall:

\begin{defn}
    The \textbf{left regular representation} $\lambda_{G}$ is the representation of $G$ on $\mathrm{L}^2(G)$ defined by
    \[
    (\lambda_{G}(r)\xi)(s) = \xi(r^{-1}s) \quad s,r \in G \text{ and } \xi \in \mathrm{L}^2(G)
    \]
    The \textbf{(left) group von Neumann algebra} is the von Neumann algebra $L(G)$ generated by the set $\{ \lambda_{G}(r) \ : \ r \in G \}$.
\end{defn}

Recall also the \textbf{right group von Neumann algebra} $R(G) = L(G)'$ generated by the \textbf{right regular representation} 
\[
(\rho_{G}(r)\xi)(s) = \Delta_{G}(r)^{1/2}\xi(sr).
\]

Given $f \in \mathrm{L}^{\infty}(G)$, we have an operator, which we also denote by $f$, on $\mathrm{L}^2(G)$ given by
\[
(f\xi)(s) = f(s)\xi(s) \qquad \xi \in \mathrm{L}^2(G).
\]

The multiplication on $G$ induces a unitary $W_{G}$ on $\mathrm{L}^2(G \times G) \cong \mathrm{L}^2(G) \otimes \mathrm{L}^2(G)$ defined by 
\[
(W_{G}\xi)(s,t) = \xi(s, st) \qquad s,t \in G \text{ and } \xi \in \mathrm{L}^2(G \times G).
\]
We then get a normal monomorphism $\delta_{G} : \mathrm{L}^2(G) \to \mathrm{L}^2(G \times G)$ given by 
\[
\delta_{G}(a) = W_{G}^*(a \otimes I)W_{G}.
\]
Recall the \textbf{Fourier algebra of $G$}, defined by $A(G) = \mathrm{L}^2(G)_{*}$.  We see that $A(G)$ is a semisimple commutative Banach $^*$-algebra when equipped with the multiplication 
\[
fg(\xi) = f \otimes g(\delta_{G}(\xi)) \text{ for all } f,g \in A(G) \text{ and } \xi \in L(G)
\]

Now let $(\cal M, \Phi, \alpha)$ be a $G$-\wstar-dynamical system. Consider the Hilbert space $\mathrm{L}^2(G,\cal H_{\Phi}) \cong \cal H_{\Phi} \otimes \mathrm{L}^2(G)$ of square integrable functions on $G$ with codomain $\cal H$ where we give $\mathrm{L}^2(G,\cal H_{\Phi})$ the inner product 
\[
\ip{\xi}{\eta} = \int_G \ip{\xi(s)}{\eta(s)}_{\Phi} d\mu(s) \text{ for } \xi, \eta \in \mathrm{L}^2(G,\cal H_{\Phi}).
\]

\begin{defn}
    We have actions $\pi_{\alpha}$ of $\cal M$ and $\lambda_{\alpha}$ of $G$ on $\mathrm{L}^2(G, H_{\Phi})$ given by 
    \[
        \pi_{\alpha}(x)\xi(s) = \alpha_{s}^{-1}(x)\xi(s) \quad \lambda_{\alpha}(r)\xi(s) = \xi(r^{-1}s) \qquad s,r \in G \text{ and } x \in \cal M.
    \]
    The \textbf{crossed product} $\cal M \rtimes_{\alpha} G$ of $\cal M$ by $\alpha$ is defined to be the smallest von Neumann algebra on $\mathrm{L}^2(G,\cal H_{\Phi})$ containing $\pi_{\alpha}(x)$ and $\lambda_{\alpha}(r)$ for all $x \in \cal M$ and $r \in G$.
\end{defn}

It turns out this definition is independent of the Hilbert space on which we represent $\cal M$, but we use this definition to make our discussion of standard forms of crossed products simpler.

Denote by $C_c(G, \cal M)$ the set of continuous compactly supported functions from $G$ to $\cal M$.  We give $C_c(G,\cal M)$ the structure of an involutive algebra with multiplication
\[
[a * b] (s) = \int_G \alpha_t(a(st)b(t^{-1}))d\mu(t)
\]
and involution defined by
\[
b^\sharp (s) = {\Delta_G}^{-1}(s)\alpha_{s^{-1}}((b(s^{-1}))^*)
\]
for all $a, b \in C_c(G,\cal M)$. Since $\cal M$ acts on $\cal H_{\Phi}$, we can define the $^*$-representation
\[
m(a) = \int_G u_s \pi(a(s))d\mu(s) 
\]
of $C_c(G, \cal M)$ on $C_c(G, \cal H_{\Phi}) \subseteq \mathrm{L}^2(G,\cal H_{\Phi})$.

For $f \in C_c(G, \cal M)$ and $a \in \cal M$, we define $f \cdot a \in C_c(G, \cal M)$ by $(f \cdot a)(s) := f(s)a$.  We consider 
\[
\mathfrak{B}_{\Phi} := C_c(G, \cal M) \cdot n_{\Phi} = \mathrm{span}\{ x \cdot a \ : \ x \in C_c(G, \cal M), a \in n_{\Phi} \}
\]
as a left ideal of $C_c(G, \cal M)$, which can in turn be seen as a subset of $\mathrm{L}^2(G, \cal M)$.  Furthermore, $\mathfrak{B}_{\Phi} \cap \mathfrak{B}_{\Phi}^*$ defines a full left Hilbert algebra whose associated von Neumann algebra is $\cal M \rtimes_{\alpha} G$.  

Define the dual weight $\widehat{\Phi}$ to be the weight induced on $\cal M \rtimes_{\alpha} G$ by $\mathfrak{B}_{\Phi} \cap \mathfrak{B}_{\Phi}^*$.  We need the following facts about $\widehat{\Phi}$.

\begin{prop}\label{dualweight}
    $\widehat{\Phi}$ and $\sigma_t^{\widehat{\Phi}}$ satisfy
    \begin{itemize}
        \item $\widehat{\Phi}(m(a^\sharp a)) = \Phi((a^\sharp a)(e))$
        \item $\sigma_t^{\widehat{\Phi}}(\pi_{\alpha}(x)) = \pi_{\alpha}(\sigma_t^{\Phi}(x))$ for $x \in \cal M$ and $t \in \mathbb{R}$
        \item $\sigma_t^{\widehat{\Phi}}(u_s) = (\Delta_{G}(s))^{it}u_s((D\Phi \circ \alpha_s : D\Phi)_t)$ for $s \in G$ and $t \in \mathbb{R}$
    \end{itemize}
\end{prop}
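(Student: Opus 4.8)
The plan is to treat the three assertions separately: the first by a direct computation with the full left Hilbert algebra $\mathfrak{B}_\Phi \cap \mathfrak{B}_\Phi^*$ that defines $\widehat{\Phi}$, and the last two together via the uniqueness of the modular automorphism group.

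For the first bullet I would use that $\widehat{\Phi}$ is by construction the weight associated to the full left Hilbert algebra $\mathfrak{B}_\Phi \cap \mathfrak{B}_\Phi^*$, so that for $a$ in a suitable dense subalgebra of $C_c(G,\mathcal{M})\cdot n_\Phi$ one has $\widehat{\Phi}(m(a)^* m(a)) = \langle \Lambda(a), \Lambda(a)\rangle$, where $\Lambda(a)\in \mathrm{L}^2(G,\mathcal{H}_\Phi)$ is the vector representing $a$ and $m(a)^* m(a) = m(a^\sharp * a)$. First I would expand this right-hand side using the inner product on $\mathrm{L}^2(G,\mathcal{H}_\Phi)$, obtaining $\int_G \Phi(a(s)^* a(s))\, d\mu(s)$ up to the twist by $\alpha_s$ built into $\pi_\alpha$. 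Then I would expand $\Phi((a^\sharp * a)(e))$ from the definitions of the convolution product and the involution $\sharp$ on $C_c(G,\mathcal{M})$, carry out the change of variables $t\mapsto t^{-1}$, and absorb the resulting factor of $\Delta_G$. The two expressions match precisely because $\Phi\circ\alpha_t = \Phi$ for all $t$; this $\alpha$-invariance is the only nontrivial input and is exactly what makes the $\Delta_G$-factors cancel.

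For the second and third bullets I would invoke the KMS characterization of the modular automorphism group. Define a one-parameter family $\beta_t$ of automorphisms of $\mathcal{M}\rtimes_\alpha G$ on the generators by $\beta_t(\pi_\alpha(x)) = \pi_\alpha(\sigma_t^\Phi(x))$ and $\beta_t(u_s) = \Delta_G(s)^{it} u_s (D\Phi\circ\alpha_s:D\Phi)_t$. The first thing to check is that $\beta_t$ respects the covariance relation $u_s\pi_\alpha(x)u_s^* = \pi_\alpha(\alpha_s(x))$; this reduces to the identity $\sigma_t^\Phi\circ\alpha_s = \alpha_s\circ\sigma_t^\Phi$, which holds because $\alpha_s$ preserves $\Phi$ — and this is exactly the role played by the Connes cocycle in the general, non-preserving situation, so here it is in fact trivial, $(D\Phi\circ\alpha_s:D\Phi)_t = 1$. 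One then checks that $t\mapsto\beta_t$ is a $\sigma$-weakly continuous homomorphism $\mathbb{R}\to\Aut(\mathcal{M}\rtimes_\alpha G)$, that $\widehat{\Phi}\circ\beta_t = \widehat{\Phi}$ (using the first bullet together with $\Phi$-invariance), and that $\widehat{\Phi}$ satisfies the modular (KMS) condition with respect to $\beta_t$, the verification being carried out on the Tomita algebra of $\mathfrak{B}_\Phi\cap\mathfrak{B}_\Phi^*$, where analyticity of $t\mapsto\beta_t(m(a))$ follows by hand from that of $t\mapsto\sigma_t^\Phi$ and $t\mapsto\Delta_G(s)^{it}$. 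Uniqueness of the modular automorphism group then gives $\beta_t = \sigma_t^{\widehat{\Phi}}$, which is precisely (2) and (3).

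An alternative and more rigorous backbone — essentially the route in \cite{HaDual1, HaDual2} and \cite[Ch.~X]{Takesaki} — is to identify the modular operator $\Delta_{\widehat{\Phi}}$ directly, by computing the closure $S_{\widehat{\Phi}} = J_{\widehat{\Phi}}\Delta_{\widehat{\Phi}}^{1/2}$ of the map $\Lambda(a)\mapsto\Lambda(a^\sharp)$ on $\mathrm{L}^2(G,\mathcal{H}_\Phi)$ and then reading off $\sigma_t^{\widehat{\Phi}}(\cdot) = \Delta_{\widehat{\Phi}}^{it}(\cdot)\Delta_{\widehat{\Phi}}^{-it}$. I expect the main obstacle, in either approach, to be bookkeeping rather than conceptual: one must keep the two distinct ``modular'' data apart — the group modular function $\Delta_G$, which enters through the involution $\sharp$ and the Haar measure, and the modular operator $\Delta_\Phi$ of the weight, which enters through the $\mathcal{M}$-part — and correctly attribute the factor $\Delta_G(s)^{it}$ to the former while recovering $\sigma_t^\Phi$ from the latter. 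Since $\widehat{\Phi}$ is a genuine weight rather than a state, a secondary technical point is that every analyticity and KMS argument, as well as the extension of $\beta_t$ from generators to all of $\mathcal{M}\rtimes_\alpha G$, must be done on a fixed core of analytic elements with the usual care about unboundedness and domains.
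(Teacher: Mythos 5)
The paper offers no proof of this proposition at all: it is stated in the background section (Section \ref{SectionCrossedBackground}) purely as a quoted fact from Haagerup's dual-weight papers \cite{HaDual1, HaDual2} (see also \cite[Ch.~X]{Takesaki}), so there is nothing internal to compare against. What you have written is a reasonable reconstruction of the standard argument from those references, and I see no error in it: the first identity does follow from the defining property $\widehat{\Phi}(\pi_\ell(\xi)^*\pi_\ell(\xi))=\|\xi\|^2$ of the weight attached to the left Hilbert algebra $\mathfrak{B}_\Phi\cap\mathfrak{B}_\Phi^*$ plus the change of variables absorbing $\Delta_G$, and the last two follow either from the KMS/invariance characterization of the modular group for weights or, as you note, by computing $S_{\widehat{\Phi}}$ directly (the latter is what Haagerup actually does). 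Two refinements worth recording. First, the identity $\widehat{\Phi}(m(a^\sharp a))=\Phi((a^\sharp a)(e))$ holds for arbitrary actions, not only $\Phi$-preserving ones; the $\alpha$'s cancel because of the twist $\alpha_s^{-1}$ built into $\pi_\alpha$ and the matching twist in the identification of $\mathfrak{B}_\Phi$ with a subspace of $\mathrm{L}^2(G,\mathcal{H}_\Phi)$, so invoking $\Phi\circ\alpha_t=\Phi$ is legitimate in this paper's setting but is not ``the only nontrivial input'' in general. Second, defining $\beta_t$ on generators does not by itself produce an automorphism of $\cal M\rtimes_\alpha G$; you need a spatial implementation, e.g.\ conjugation by the unitary $(V_t\xi)(s)=\Delta_G(s)^{it}\Delta_\Phi^{it}\xi(s)$ on $\mathrm{L}^2(G,\cal H_\Phi)$, after which the check on generators shows $V_t$ normalizes the crossed product and your KMS (or direct) argument identifies $\mathrm{Ad}(V_t)$ with $\sigma_t^{\widehat{\Phi}}$. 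With those points made explicit your sketch is a complete proof of the cited fact.
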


\subsection{Duality for Locally Compact Abelian Groups}

Let $G$ be a locally compact abelian group for this subsection.  Denote by $\widehat{G}$ the Pontryagin dual of $G$.  We may assume that the Haar measures $\mu$ and $\widehat{\mu}$ on $G$ and $\widehat{G}$ respectively are chosen so that Plancherel's formula holds:

\[
\widehat{f}(p) = \int_G f(s)\overline{p(s)}d\mu(s) \text{ and } f(s) = \int_{\widehat{G}} \widehat{f}(p)p(s)d\widehat{\mu}(p)
\]
For every $p \in \widehat{G}$, there is a unitary $v(p) \in B(\mathrm{L}^2(G,\cal H))$ defined by:
\[
v(p)\xi(s) = \overline{p(s)}\xi(s) 
\]
where $s \in G$ and $\xi \in \mathrm{L}^2(G,\cal H)$.

This defines an action $\widehat{\alpha}$ of $\widehat{G}$ on $\cal M \rtimes_{\alpha} G$ defined by:
\[
\widehat{\alpha}_{p}(\pi_{\alpha}(x)) = \pi_{\alpha}(x) \text{ and } \widehat{\alpha}_{p}(\lambda_{\alpha}(s)) = \overline{p(s)}\lambda_{\alpha}(s)
\]
for all $x \in \cal M$, $s \in G$ and $p \in \widehat{G}$. It is easy to see that $\cal M$ is isomorphic to the fixed point algebra of $\widehat{\alpha}$.

\begin{thm}[Takesaki Duality]\label{TheoremTakesakiDuality}
    There is an isomorphism $\Gamma: \cal M \rtimes_{\alpha} G \rtimes_{\widehat{\alpha}} \widehat{G} \to \cal M \otimes B(\ell^2(G))$ such that
    \[
    [\Gamma(\pi_{\widehat{\alpha}} \circ \pi_{\alpha}(x))\xi](s) = \pi_{\alpha}(x) \xi(s) \qquad [\Gamma(\pi_{\widehat{\alpha}} \circ \lambda_{\alpha}(r))\xi](s) = \lambda_{\alpha}(r)\xi(s)
    \]
    and
    \[
    [\Gamma(\lambda_{\widehat{\alpha}}(p)\xi](s) = \overline{p(s)}\xi(s).
    \]

    It follows that if $\cal M$ is properly infinite and $G$ is second countable, that $\cal M \rtimes_{\alpha} G \rtimes_{\widehat{\alpha}} \widehat{G} \cong \cal M$. Moreover, the action $\widehat{\widehat{\alpha}}$ is transformed to the action $\Tilde{\alpha}_s = \alpha_s \otimes \mathrm{Ad}(u_s)$.
\end{thm}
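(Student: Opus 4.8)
The plan is to establish the isomorphism spatially: realize the iterated crossed product concretely, transport it by an explicit pair of unitaries onto $\cal M \otimes B(\mathrm{L}^2(G))$, and then read $\Gamma$ off the transported generators. First I would recall that $\cal N := \cal M \rtimes_\alpha G$ is represented on $\mathrm{L}^2(G, \cal H_\Phi)$ via $\pi_\alpha$ and $\lambda_\alpha$, that the dual action is $\widehat\alpha_p = \Ad(v(p))$ with $v(p)$ as in Section \ref{SectionCrossedBackground}, and then represent $\cal N \rtimes_{\widehat\alpha}\widehat G$ on $\mathrm{L}^2(\widehat G, \mathrm{L}^2(G,\cal H_\Phi)) \cong \cal H_\Phi \otimes \mathrm{L}^2(G)\otimes\mathrm{L}^2(\widehat G)$. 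Writing a vector as $\zeta(p,s)$, I would record the three generating families explicitly: $\pi_{\widehat\alpha}(\pi_\alpha(x))$ acts by $\zeta(p,s)\mapsto\alpha_s^{-1}(x)\zeta(p,s)$ (independent of $p$), $\pi_{\widehat\alpha}(\lambda_\alpha(r))$ by $\zeta(p,s)\mapsto p(r)\zeta(p,r^{-1}s)$, and $\lambda_{\widehat\alpha}(q)$ by the translation $\zeta(p,s)\mapsto\zeta(q^{-1}p,s)$.

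Next I would apply two unitaries. The first is the Plancherel transform $\cal F$ on the $\mathrm{L}^2(\widehat G)$-factor, with the normalization fixed in Section \ref{SectionCrossedBackground}, identifying $\mathrm{L}^2(\widehat G)\cong\mathrm{L}^2(G)$; under it the translation $\lambda_{\widehat\alpha}(q)$ becomes multiplication by a character in the new variable $t$, the scalar $p(r)$ in $\pi_{\widehat\alpha}(\lambda_\alpha(r))$ becomes a translation in $t$, and $\pi_{\widehat\alpha}(\pi_\alpha(x))$ is untouched. The second is the multiplicative unitary $V$ induced by the group law, $(V\xi)(t,s) = \xi(t,ts)$, which is unitary because $G$ is unimodular. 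A direct computation then shows that after $\cal F$ and $V$ the images of $\pi_{\widehat\alpha}(\lambda_\alpha(r))$ and $\lambda_{\widehat\alpha}(q)$ become, respectively, a translation and a character-multiplication acting only on the $t$-variable, so that they lie in $1\otimes B(\mathrm{L}^2(G)_t)$, while $\pi_{\widehat\alpha}(\pi_\alpha(x))$ becomes multiplication by the function $(t,s)\mapsto\alpha_{ts}^{-1}(x)$.

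The heart of the argument is then to identify the transported algebra and to verify that $\Gamma$ is a normal isomorphism, not a mere homomorphism. Here I would use the imprimitivity relation: on $\mathrm{L}^2(G)$ the translations together with the multiplications by characters generate all of $B(\mathrm{L}^2(G))$, since an operator commuting with every character-multiplication lies in $\mathrm{L}^\infty(G)$ and an element of $\mathrm{L}^\infty(G)$ commuting with every translation is scalar. This shows the transported algebra contains $1\otimes B(\mathrm{L}^2(G)_t)$; computing its commutant then gives $\pi_\alpha(\cal M)'\otimes 1_t$, so the transported algebra equals $\pi_\alpha(\cal M)''\otimes B(\mathrm{L}^2(G)_t)$. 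Since $\pi_\alpha$ is a normal faithful representation of $\cal M$, the map $\pi_\alpha^{-1}\otimes\id$ is a normal isomorphism of this algebra onto $\cal M\otimes B(\mathrm{L}^2(G))$; composing it with the transport defines $\Gamma$. Checking that $\Gamma$ carries the three generators to the operators in the statement is now short: $\lambda_{\widehat\alpha}(q)\mapsto 1\otimes M_{\overline q}$, and $\pi_{\widehat\alpha}(\lambda_\alpha(r))\mapsto 1\otimes\lambda(r) = \lambda_\alpha(r)$, using crucially that $G$ is abelian so that left and right translations coincide, while the residual $t$-dependence of $\alpha_{ts}^{-1}(x)$ is exactly the twist defining $\pi_\alpha$, whence $\pi_{\widehat\alpha}(\pi_\alpha(x))\mapsto\pi_\alpha(x)$. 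I expect this generation-and-commutant step, together with keeping the Fourier-transform bookkeeping consistent, to be the main obstacle.

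Finally I would deduce the two consequences. When $G$ is second countable, $\mathrm{L}^2(G)$ is separable, so if $\cal M$ is properly infinite then $\cal M\otimes B(\mathrm{L}^2(G))\cong\cal M$ by the standard stability of properly infinite algebras (a sequence of isometries with orthogonal ranges summing to $1$ yields $\cal M\cong\cal M\otimes B(\ell^2)$), giving $\cal M\rtimes_\alpha G\rtimes_{\widehat\alpha}\widehat G\cong\cal M$. For the transformed action I would track the double dual action $\widehat{\widehat\alpha}$ of $G = \widehat{\widehat G}$, which fixes $\pi_{\widehat\alpha}(\cal N)$ and rescales $\lambda_{\widehat\alpha}(p)$ by $\overline{p(s)}$, through $\cal F$, $V$, and $\pi_\alpha^{-1}\otimes\id$, evaluating its effect on each generating family to recover the action $\widetilde\alpha_s = \alpha_s\otimes\Ad(u_s)$ of the statement.
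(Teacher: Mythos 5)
The paper does not prove this theorem: it appears in Section \ref{SectionCrossedBackground} purely as background, stated without proof and attributed to the classical literature on duality for crossed products (\cite{HaDual1}, \cite{HaDual2}, \cite{NK}, \cite{Takesaki}), so there is no internal argument to compare yours against. That said, your outline is a correct rendition of the standard spatial proof, essentially Takesaki's original one: write the three generating families concretely on $\mathcal{H}_\Phi\otimes\mathrm{L}^2(G)\otimes\mathrm{L}^2(\widehat G)$, transport by the Plancherel transform in the $\widehat G$-variable and by the group-law unitary $V$, use the Stone--von Neumann/imprimitivity fact that translations together with character multiplications generate $B(\mathrm{L}^2(G))$, and pin down the transported algebra as $\pi_\alpha(\mathcal{M})\otimes B(\mathrm{L}^2(G))$ by the commutant computation; the two corollaries (absorption of $B(\ell^2)$ by a properly infinite algebra with separable predual, and tracking $\widehat{\widehat\alpha}$ through the conjugating unitaries) are also handled as they should be. Two minor points of bookkeeping: the unitarity of $V$ needs only left-invariance of Haar measure (for fixed $t$, the map $s\mapsto ts$ is a left translation), not unimodularity; and your transported operator $(t,s)\mapsto\alpha_{ts}^{-1}(x)$ is, for each fixed $t$, precisely $\pi_\alpha(\alpha_t^{-1}(x))$, so applying $\pi_\alpha^{-1}\otimes\id$ slicewise really does return $\pi_\alpha(x)$ in the new variable --- this is the one step where the conventions genuinely have to be checked, and your identification is the right one.
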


\section{\texorpdfstring{$G$}{G}-continuous Ocneanu Ultraproducts} \label{SectionUltraproducts}

Fix a locally compact Hausdorff abelian group $G$.  By the reasoning in \cite{BYG}, the ultraproduct for models of $T_{G-\vNa}$ is the continuous part of the (generalized) Ocneanu ultraproduct with respect to the action of $G$.  Consider a family $(\cal M_i, \Phi_i, \alpha^i)$ of such models.  By \cite{Arul}, there is a faithful normal $\Phi^{\cU}$-preserving conditional expectation from the Groh-Raynaud ultraproduct $\prod^{\cU}_{\GR} (\cal M_i, \cal H_{\Phi_i})$ to the (generalized) Ocneanu ultraproduct $\prod^{\cU}_{\Oc} (\cal M_i, \cal H_{\Phi_i})$, which we denote by $\cal E_{\sigma}$.  Since the action of $G$ is assumed to be $\Phi$-preserving, the action commutes with the action of the modular automorphism group, and it follows that there is a faithful normal $G$-equivariant conditional expectation $\cal E_G$ onto the continuous part.  Since $\alpha_i$ is $\Phi_i$-preserving, $\sigma_t^{\Phi_i}$ commutes with $\alpha^i$ and hence $\cal E_{G}$ commutes with $\cal E_{\sigma}$  By an argument similar to \cite[Theorem 7.8]{Arul}, there is a projection $p_G$ implementing $\prod^{\cU}_{\alpha-c} (\mathcal{M}_i, \Phi_i)$ as a corner of the Groh-Raynaud, and consequently as a standard form.

We state the main theorem of this section, though we defer the proof to after we have the required tools.

\begin{thm} \label{MainThm}
    Let $(\cal M_i, \Phi_i, (\alpha_g)_i)$ be a family of $G$-\wstar-dynamical systems indexed by $I$ where $G$ is a locally compact abelian group. Then $(\prod^{\cU}_{\alpha-c} (\mathcal{M}_i, \Phi_i)) \rtimes_{\alpha^{\cU}} G \cong \prod_{\widehat{\alpha}-c}^{\cU} (\mathcal{M}_i \rtimes_{\alpha_i} G, \widehat{\Phi}_i)$.  Furthermore, the isomorphism witnessing this is canonical.
\end{thm}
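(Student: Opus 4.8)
The plan is to realize both sides as standard forms coming from full left Hilbert algebras and to match them through the canonical correspondence of generators, using uniqueness of standard forms to upgrade the algebraic map to a spatial isomorphism. Write $N = \prod^{\cU}_{\alpha-c}(\cal M_i, \Phi_i)$ for the $\alpha$-continuous ultraproduct, realized as the corner $p_G(\prod^{\cU}_{\GR})p_G$ as recalled above, with action $\alpha^{\cU}_s([x_i]) = [\alpha^i_s(x_i)]$ (continuous precisely because we have passed to the $\alpha$-continuous part). On the left, $N \rtimes_{\alpha^{\cU}} G$ is represented on $L^2(G, \cal H_{\Phi^{\cU}})$, carries its dual weight $\widehat{\Phi^{\cU}}$, and is generated by $\pi_{\alpha^{\cU}}(N)$ together with $\lambda_{\alpha^{\cU}}(G)$; its standard form is built from $\mathfrak{B}_{\Phi^{\cU}} \cap \mathfrak{B}_{\Phi^{\cU}}^*$. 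On the right, each $\cal M_i \rtimes_{\alpha_i} G$ carries its dual weight $\widehat\Phi_i$ and dual $\widehat{G}$-action $\widehat\alpha_i$, and we form the $\widehat\alpha$-continuous part of the Ocneanu ultraproduct $\prod^{\cU}_{\Oc}(\cal M_i \rtimes_{\alpha_i} G)$.

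First I would define the canonical map on generators by $\pi_{\alpha^{\cU}}([x_i]) \mapsto [\pi_{\alpha_i}(x_i)]$ and $\lambda_{\alpha^{\cU}}(r) \mapsto [\lambda_{\alpha_i}(r)]$, and verify that it respects the crossed-product relation $\lambda_\alpha(r)\pi_\alpha(x)\lambda_\alpha(r)^* = \pi_\alpha(\alpha_r(x))$ together with the involutive-algebra structure of $C_c(G,-)$ and the representation $m(a)$. To see that these images genuinely lie in the $\widehat\alpha$-continuous Ocneanu ultraproduct, I would use Proposition \ref{dualweight}: since $G$ is abelian (hence unimodular) and $\alpha$ is $\Phi$-preserving, the Connes cocycle is trivial, so $\sigma_t^{\widehat\Phi}(\pi_\alpha(x)) = \pi_\alpha(\sigma_t^\Phi(x))$ and $\sigma_t^{\widehat\Phi}(\lambda_\alpha(s)) = \lambda_\alpha(s)$. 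This makes $[\pi_{\alpha_i}(x_i)]$ and $[\lambda_{\alpha_i}(r)]$ genuinely $\sigma^{\widehat\Phi}$-continuous whenever $[x_i]$ is $\sigma^\Phi$-continuous, while $\widehat\alpha_p$ scales $[\lambda_{\alpha_i}(s)]$ by the continuous character $\overline{p(s)}$ and fixes the $\pi$-generators. The dual-weight compatibility $\widehat\Phi(m(a^\sharp a)) = \Phi((a^\sharp a)(e))$ then reduces weight-preservation to the fact, established in the construction of $N$, that $\Phi^{\cU}$ is computed directly from the $\Phi_i$.

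The heart of the matter, and the step I expect to be the main obstacle, is to show that this map exhausts the $\widehat\alpha$-continuous part: that the only $\pi$-type elements surviving into $\prod^{\cU}_{\widehat\alpha-c}(\cal M_i \rtimes_{\alpha_i} G)$ are the $[\pi_{\alpha_i}(x_i)]$ with $[x_i]$ belonging to the $\alpha$-continuous part $N$, with no spurious mass built from merely $\sigma^\Phi$-continuous (but not $\alpha$-continuous) sequences. This is delicate precisely because $\widehat\alpha$ fixes every $\pi$-generator pointwise, so the constraint forcing $\alpha$-continuity is invisible at the level of the dual action alone; it is instead mediated by the interaction between the modular structure of $\widehat\Phi$, the $L^2(G,-)$ geometry of the dual-weight GNS construction, and the requirement that $\alpha^{\cU}$ be a continuous action on the base (which is exactly what restricting to $N$ enforces, and what prevents the left-hand crossed product from even being formed over the full Ocneanu ultraproduct). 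Reconciling these two continuity conditions is the point at which I would invoke the theorem of Tomatsu, which supplies the compatibility of the continuous part of an ultraproduct with the crossed-product construction and identifies the corner of $\prod^{\cU}_{\Oc}(\cal M_i \rtimes_{\alpha_i} G)$ cut out by $\widehat\alpha$-continuity.

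Finally, having identified the Hilbert space $L^2(G, \cal H_{\Phi^{\cU}})$ with the relevant corner of the Groh--Raynaud ultraproduct of the $L^2(G, \cal H_{\Phi_i})$, and having matched the left Hilbert algebra $\mathfrak{B}_{\Phi^{\cU}} \cap \mathfrak{B}_{\Phi^{\cU}}^*$ with the continuous ultraproduct of the $\mathfrak{B}_{\Phi_i} \cap \mathfrak{B}_{\Phi_i}^*$, I would conclude by uniqueness of standard forms that the generator correspondence extends to a $^*$-isomorphism carrying $\widehat{\Phi^{\cU}}$ to $\widehat\Phi^{\cU}$. Canonicity is then immediate, since the isomorphism is determined by its explicit action on the generators $\pi_{\alpha^{\cU}}(N)$ and $\lambda_{\alpha^{\cU}}(G)$, and this action depends only on the data $(\cal M_i, \Phi_i, \alpha^i)$.
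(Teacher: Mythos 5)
Your first two paragraphs essentially reproduce the paper's Lemma \ref{LemmaCrossToOc}: the generator correspondence, the use of Proposition \ref{dualweight} to see that $[\pi_{\alpha_i}(x_i)]$ and $[\lambda_{\alpha_i}(s)]$ are $\sigma^{\widehat{\Phi}}$-continuous (with $\Delta_G \equiv 1$ killing the scalar $\Delta_G(s)^{it}$ and $\Phi$-preservation killing the Connes cocycle), and hence the embedding of $(\prod^{\cU}_{\alpha-c}(\cal M_i,\Phi_i)) \rtimes_{\alpha^{\cU}} G$ into $\prod^{\cU}_{\Oc}(\cal M_i \rtimes_{\alpha_i} G, \widehat{\Phi}_i)$. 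You also correctly locate the real difficulty in the surjectivity onto the $\widehat{\alpha}$-continuous part. The problem is what you do at that point: you ``invoke the theorem of Tomatsu'' to supply exactly this step. That is a genuine gap, for two reasons. First, Tomatsu's result is stated for ultrapowers of a single system, not for ultraproducts of a family, so it does not apply as a black box here; the paper explicitly remarks that the author ``does not see a way to modify Tomatsu's proof to prove our generalization.'' Second, what Tomatsu's theorem asserts \emph{is} (the ultrapower case of) the statement you are trying to prove, so citing it for the decisive step is circular rather than a proof.

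The missing idea is an Arveson-spectral characterization of continuity in the ultraproduct, which is what the paper develops in place of Tomatsu's argument. Concretely: Theorem \ref{ThmCompactSupp} shows that an element $(x_i)^\bullet$ of the Ocneanu ultraproduct is $\beta$-continuous (for an action $\beta$) if and only if it is $\|\cdot\|^\#$-approximable by sequences $(y_i)$ whose Arveson spectra $\mathrm{Sp}(y_i)$ lie in a single compact neighbourhood $K$ independent of $i$; this is proved by smoothing with $\mathrm{L}^1$-functions whose Fourier transforms are $1$ on $K$, not by any appeal to the crossed-product structure. Lemma \ref{LemSpectrumDual} then identifies the $\widehat{\alpha}$-spectral subspaces of the crossed product: a vector $\xi \in C_c(G,\cal H_{\Phi})$ has $\widehat{\alpha}$-spectrum inside its support as a function on $G$. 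Combining the two, every $\widehat{\alpha}$-continuous element of $\prod^{\cU}_{\Oc}(\cal M_i \rtimes_{\alpha_i} G, \widehat{\Phi}_i)$ is a limit of sequences $(\xi_i)$ with uniformly compact support, and the pointwise ultralimit $\xi_{\cU}(s) = \lim_{i\to\cU}\xi_i(s)$ lands in $C_c(G, \prod^{\cU}_{\alpha-c}(\cal M_i,\Phi_i))$, whose closure is the left-hand side. Note also that your heuristic worry about ``spurious $\pi$-type mass'' cannot be resolved generator-by-generator, since $\widehat{\alpha}$ fixes all of $\pi_{\alpha}(\cal M)$; it is the dual weight (for which $\pi_{\alpha}(x)$ alone is never square-integrable) together with the spectral-support argument that forces the base sequence to be $\alpha$-continuous. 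Your final paragraph on standard forms and canonicity is consistent with how the paper concludes, but without the spectral-theoretic density argument the proof does not close.
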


A version of this theorem for the special case of \textit{ultrapowers} is due to Tomatsu \cite{Tomatsu}.  The author does not see a way to modify Tomatsu's proof to prove our generalization.  Instead, our proof is based heavily on Haagerup's construction of standard forms, together with more rudimentary techniques from abstract harmonic analysis.



We begin our proof with the following lemma.

\begin{lem}\label{LemmaCrossToOc}
    $(\prod^{\cU}_{\alpha-c} (\mathcal{M}_i, \Phi_i)) \rtimes_{\alpha^{\cU}} G$ embeds into $\prod^{\cU}_{\Oc} (\mathcal{M}_i \rtimes_{\alpha_i} G, \widehat{\Phi}_i)$ with a conditional expectation.
\end{lem}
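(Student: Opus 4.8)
The plan is to build the embedding directly on the generators of the crossed product and then invoke Takesaki's theorem to produce the conditional expectation. Write $\mathcal{M}^{\cU} = \prod^{\cU}_{\alpha-c}(\mathcal{M}_i, \Phi_i)$ and let $\widehat{\Phi}^{\cU}$ denote the weight on $\prod^{\cU}_{\Oc}(\mathcal{M}_i \rtimes_{\alpha_i} G, \widehat{\Phi}_i)$ coming from the ultraproduct construction. Since $\mathcal{M}^{\cU} \rtimes_{\alpha^{\cU}} G$ is generated by the operators $\pi_{\alpha^{\cU}}(x)$ for $x \in \mathcal{M}^{\cU}$ and $\lambda_{\alpha^{\cU}}(r)$ for $r \in G$, I would define the candidate map $\Theta$ on these generators by
\[
\Theta(\pi_{\alpha^{\cU}}((x_i)^{\cU})) = (\pi_{\alpha_i}(x_i))^{\cU}, \qquad \Theta(\lambda_{\alpha^{\cU}}(r)) = (\lambda_{\alpha_i}(r))^{\cU},
\]
where $(x_i)^{\cU}$ is a representing sequence for $x$ in the continuous part. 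The first task is to check that these images genuinely lie in the Ocneanu ultraproduct of the crossed products, the second is that $\Theta$ respects the defining relations of the crossed product, and the third is to verify that $\Theta$ is weight-preserving, which yields both injectivity and (by modular invariance of the image) the conditional expectation.

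For well-definedness, the key input is Proposition \ref{dualweight}. For the $\pi$-generators, the identity $\sigma_t^{\widehat{\Phi}_i}(\pi_{\alpha_i}(x_i)) = \pi_{\alpha_i}(\sigma_t^{\Phi_i}(x_i))$ shows that the modular flow of the dual weight on $\pi_{\alpha_i}(x_i)$ is exactly the image under $\pi_{\alpha_i}$ of the modular flow of $\Phi_i$ on $x_i$; hence if $(x_i)$ is a $\sigma^{\Phi_i}$-centralizing (Ocneanu-representing) sequence, then $(\pi_{\alpha_i}(x_i))$ is $\sigma^{\widehat{\Phi}_i}$-centralizing and so defines an element of $\prod^{\cU}_{\Oc}$; isometry of $\pi_{\alpha_i}$ in the relevant norm ensures this is independent of the representing sequence. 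For the $\lambda$-generators, since each $\alpha_i$ is $\Phi_i$-preserving the Connes cocycle $(D\Phi_i \circ (\alpha_i)_r : D\Phi_i)_t$ is trivial, so the third bullet of Proposition \ref{dualweight} collapses to $\sigma_t^{\widehat{\Phi}_i}(\lambda_{\alpha_i}(r)) = \Delta_G(r)^{it}\lambda_{\alpha_i}(r)$, uniformly in $i$. Thus each $\lambda_{\alpha_i}(r)$ is an eigenvector of the modular flow with scalar eigenvalue depending continuously on $t$, so the constant sequence $(\lambda_{\alpha_i}(r))$ is centralizing and lands in the continuous part. The covariance relation is then immediate from $\lambda_{\alpha_i}(r)\pi_{\alpha_i}(x_i)\lambda_{\alpha_i}(r)^* = \pi_{\alpha_i}((\alpha_i)_r(x_i))$ together with the fact that the ultraproduct action $\alpha^{\cU}$ is computed coordinatewise, which is precisely the continuity built into the axiomatization of Section \ref{SectionLCG}. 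By the universal property of the crossed product this produces a normal $^*$-homomorphism $\Theta$.

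To finish, I would show $\widehat{\Phi}^{\cU}\circ\Theta = \widehat{\Phi^{\cU}}$, the dual weight on $\mathcal{M}^{\cU}\rtimes_{\alpha^{\cU}}G$. It suffices to check this on elements of the form $m(a^\sharp a)$ for $a \in C_c(G, \mathcal{M}^{\cU})$, where the first bullet of Proposition \ref{dualweight} reduces both sides to the evaluation $\Phi((a^\sharp a)(e))$ of the base weight at the identity; passing this through the ultraproduct matches the two weights. Faithfulness of $\widehat{\Phi^{\cU}}$ then gives injectivity of $\Theta$. For the conditional expectation, I would observe that weight-preservation together with the flow computations above gives $\Theta \circ \sigma_t^{\widehat{\Phi^{\cU}}} = \sigma_t^{\widehat{\Phi}^{\cU}}\circ\Theta$, so the image $\Theta(\mathcal{M}^{\cU}\rtimes_{\alpha^{\cU}}G)$ is globally invariant under the modular automorphism group $\sigma_t^{\widehat{\Phi}^{\cU}}$; by Takesaki's theorem there is a unique $\widehat{\Phi}^{\cU}$-preserving conditional expectation onto it. The main obstacle I anticipate is the bookkeeping in the well-definedness and weight-preservation steps: one must be careful that the modular flow of $\widehat{\Phi}^{\cU}$ agrees coordinatewise with the flows $\sigma_t^{\widehat{\Phi}_i}$ only on the Ocneanu/continuous part, and that the weight identity $\widehat{\Phi}(m(a^\sharp a)) = \Phi((a^\sharp a)(e))$ survives the limiting process, in particular that elements of $C_c(G,\mathcal{M}^{\cU})$ admit coordinatewise-defined approximants whose evaluations at $e$ converge appropriately. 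Verifying Takesaki's modular-invariance criterion for the image rigorously is where the argument has the most moving parts.
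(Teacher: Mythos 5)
Your proposal is correct and follows essentially the same route as the paper: both arguments reduce to checking, via Proposition \ref{dualweight}, that the generators $\pi_{\alpha_i}(x_i)$ and $\lambda_{\alpha_i}(s)$ define elements of the Ocneanu ultraproduct of the crossed products, the $\pi$-part because the dual modular flow intertwines with $\sigma^{\Phi_i}$, and the $\lambda$-part because $\Phi$-preservation trivializes the Connes cocycle so that $\sigma_t^{\widehat{\Phi}_i}(\lambda_{\alpha_i}(s)) = \Delta_G(s)^{it}\lambda_{\alpha_i}(s)$ is norm-continuous in $t$. The only cosmetic difference is that the paper obtains the conditional expectation by factoring through the Groh--Raynaud ultraproduct and citing the expectations already established in \cite{Arul}, while you derive it from Takesaki's modular-invariance criterion after verifying weight preservation; these amount to the same thing.
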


Before proving this lemma, we remark that a version of the above was proved recently for \cstar-dynamical systems when $G$ is an amenable group by Zhengyu Fu in \cite{Fu}.

\begin{proof}
    We first observe that the (generalized) Ocneanu ultraproduct $\prod^{\cU}_{\Oc} (\mathcal{M}_i, \Phi_i)$ embeds in $\prod^{\cU}_{\Oc} (\mathcal{M}_i \rtimes_{\alpha_i} G, \widehat{\Phi}_i)$ with a conditional expectation. To see this, note that both $\prod^{\cU}_{\GR} (\mathcal{M}_i, \Phi_i)$ and $\prod^{\cU}_{\Oc} (\mathcal{M}_i \rtimes_{\alpha_i} G, \widehat{\Phi}_i)$ are seen to naturally embed in $\prod^{\cU}_{\GR} (\mathcal{M}_i \rtimes_{\alpha_i} G, \widehat{\Phi}_i)$.  We want to see that $\sigma^{\Phi}$-continuous elements of $\prod^{\cU}_{\GR} (\mathcal{M}_i, \Phi_i)$ are mapped to $\sigma^{\widehat{\Phi}}$-continuous elements of $\prod^{\cU}_{\GR} (\mathcal{M}_i \rtimes_{\alpha_i} G, \widehat{\Phi}_i)$ under the first embedding.  This is the case because $\sigma^{\widehat{\Phi}_i}(\pi_i(x_i)) = \pi_i(\sigma^{\Phi_i}(x_i))$ for all $x_i \in \cal M_i$ by Proposition \ref{dualweight}.

    A fortiori, we have $\prod^{\cU}_{\alpha-c} (\mathcal{M}_i, \Phi_i)$ embeds in $\prod^{\cU}_{\Oc} (\mathcal{M}_i \rtimes_{\alpha_i} G, \widehat{\Phi}_i)$ with a conditional expectation.  Since $(\prod^{\cU}_{\alpha-c} (\mathcal{M}_i, \Phi_i)) \rtimes_{\alpha^{\cU}} G$ in standard form is generated by a copy of $(\prod^{\cU}_{\alpha-c} (\mathcal{M}_i, \Phi_i))$ together with the unitaries $\lambda_{\alpha^\cU}(s)$ for $s \in G$, it now suffices to see that $(\lambda_{\alpha}(s))^\bullet \in \prod^{\cU}_{\Oc} (\mathcal{M}_i \rtimes_{\alpha_i} G, \widehat{\Phi}_i)$ for all $s \in G$.  To see this, note by Proposition \ref{dualweight} that for every $s \in G$ and $t \in \bbR$ and $i \in I$, we have
    \[
    \sigma_t^{\widehat{\Phi}_i}(\lambda_{\alpha}(s)) = (\Delta_{G}(s))^{it}\lambda_{\alpha}(s)((D\Phi \circ \alpha_s : D\Phi)_t) = (\Delta_{G}(s))^{it}\lambda_{\alpha}(s),
    \]
    where the last equality is by the assumption that $\alpha$ preserves $\Phi$.  This is continuous in $t$ since $\Delta_{G}(s)$ is a constant real number, thus proving the claim.
\end{proof}

Since $\prod^{\cU}_{\alpha-c} (\mathcal{M}_i, \Phi_i)$ is in standard form, the Tomita algebra $C_c(G, \prod^{\cU}_{\alpha-c} (\mathcal{M}_i, \Phi_i))$ consisting of continuous compactly supported functions from $G$ to the continuous ultraproduct is dense in $(\prod^{\cU}_{\alpha-c} (\mathcal{M}_i, \Phi_i)) \rtimes_{\alpha^{\cU}} G$.  We want to see that functions of this form give a dense subset of $\prod_{\widehat{\alpha}-c}^{\cU} (\mathcal{M}_i \rtimes_{\alpha_i} G, \widehat{\Phi}_i)$.  To this end, we now develop some Arveson spectral theory for continuous ultraproducts.

\begin{lem}
    Let $(y_i)^\bullet \in \prod^{\cU} \cal M_i$ be an element of the (generalized) Ocneanu ultraproduct. Assume there exists a compact neighbourhood $K \subseteq \widehat{G}$ of the identity $\hat{e}$ of the dual group such that $y_i \in M(\alpha^i, K)$ for all $i \in I$.  Then $(y_i)$ is $\alpha$-continuous. 
\end{lem}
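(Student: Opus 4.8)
The plan is to convert the compactness of $K$ into a single $\mathrm{L}^1$-function that acts as a unit on every $y_i$ at once, and then to read off from it a modulus of continuity for the ultraproduct action that is \emph{uniform in} $i$; it is exactly this uniformity that allows the estimate to survive the passage to the ultrafilter limit. Concretely, I would first use that $K\subseteq\widehat G$ is compact to select, once and for all, a function $f\in\mathrm{L}^1(G)$ whose Fourier transform $\widehat f$ is identically $1$ on some open neighbourhood of $K$; this is available because $\{\widehat f : f\in\mathrm{L}^1(G)\}$ is a regular (Tauberian) Banach algebra of functions on $\widehat G$. The hypothesis $y_i\in M(\alpha^i,K)$ says that the Arveson spectrum $\spec_{\alpha^i}(y_i)$ is contained in $K$, so $\widehat f\equiv 1$ on a neighbourhood of $\spec_{\alpha^i}(y_i)$ for \emph{every} $i$, and the Arveson spectral calculus then yields $\alpha^i_f(y_i)=y_i$ simultaneously for all $i$. (It is here that one needs $\widehat f=1$ on an open set containing $K$, not merely on $K$ itself.)

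Next I would combine this with the covariance relation $\alpha^i_s\circ\alpha^i_f=\alpha^i_{L_s f}$, where $(L_s f)(t)=f(s^{-1}t)$ and we use that the abelian group $G$ is unimodular, to write
\[
\alpha^i_s(y_i)-y_i=\alpha^i_{L_s f}(y_i)-\alpha^i_f(y_i)=\alpha^i_{L_s f-f}(y_i).
\]
Since each $\alpha^i_t$ is an isometric $^*$-automorphism we have $\|\alpha^i_g\|\le\|g\|_1$ as an operator, whence
\[
\|\alpha^i_s(y_i)-y_i\|\le\|L_s f-f\|_1\,\|y_i\|,
\]
with the factor $\|L_s f-f\|_1$ independent of $i$. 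Passing to the ultraproduct, where the action is computed on representatives, $\alpha^{\cU}_s((y_i)^\bullet)=(\alpha^i_s(y_i))^\bullet$, and where the operator norm of a class is bounded by $\lim_{\cU}$ of the norms of its representatives, I obtain
\[
\|\alpha^{\cU}_s((y_i)^\bullet)-(y_i)^\bullet\|\le\|L_s f-f\|_1\,\lim_{\cU}\|y_i\|.
\]
By strong continuity of translation in $\mathrm{L}^1(G)$ the right-hand side tends to $0$ as $s\to e$, so $s\mapsto\alpha^{\cU}_s((y_i)^\bullet)$ is continuous at the identity in operator norm, hence everywhere by the group law, and a fortiori $\sigma$-strong$^*$-continuous on its (bounded) orbit; that is, $(y_i)^\bullet$ is $\alpha$-continuous.

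The genuinely load-bearing point, and the only place where care is required, is the uniformity in $i$: because $K$ is a single compact set common to all the $y_i$, one function $f$ serves for every index and produces a modulus of continuity $\|L_s f-f\|_1$ with no dependence on $i$. Were $K$ allowed to vary with the index, the $\mathrm{L}^1$-moduli need not be controlled along $\cU$ and the conclusion would fail. Everything else — the selection of $f$ with $\widehat f=1$ near $K$, the identity $\alpha^i_f(y_i)=y_i$, and the interchange of $\alpha^i_s$ with the defining integral — is standard Arveson spectral theory, and the remaining manipulations are a routine transcription through the ultraproduct.
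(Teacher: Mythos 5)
Your proof is correct and follows essentially the same route as the paper's: choose a single $f\in\mathrm{L}^1(G)$ with $\widehat f\equiv 1$ on $K$ so that $\alpha^i_f(y_i)=y_i$ for every $i$, then bound $\|\alpha^i_r(y_i)-y_i\|$ by $\|L_rf-f\|_1$ times a bound on the $y_i$ uniform in $i$, and finish with continuity of translation in $\mathrm{L}^1(G)$. The only adjustment needed is to run the estimate in the norm $\|\cdot\|_{\Phi_i}^{\#}$ (the one in which $\alpha$-continuity is defined) rather than the operator norm, which is harmless since the same contraction bound $\|\alpha^i_g(y)\|_{\Phi_i}^{\#}\le\|g\|_1\|y\|_{\Phi_i}^{\#}$ holds for the $\Phi_i$-preserving action.
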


\begin{proof}
    Let $\e > 0$ be given.  There exists $L > 0$ such that $\|y_i\|_{\Phi}^\# \leq L$ for all $i \in I$  Pick $f \in \mathrm{L}^1(G)$ such that $f$ is right uniformly continuous and $\widehat{f}(s) = 1$ for all $g \in K$.  Note that $\alpha^i_{f}(y_i) = y_i$ for all $i$.  Now for $r \in G$,
    \[
    \|\alpha^{i}_{r}(y_i) -y_i\|_{\Phi_i}^\# = \|\alpha^{i}_{f(s-r) - f(s)}(y_i)\|_{\Phi_i}^\# \leq \|f(s-r) - f(s)\|_{1}\|y_i\|_{\Phi_i}^\#
    \]
    Now if $r = e$, the identity of $G$, we see that $\|f(s-r) - f(s)\|_{1} = 0$.  By right uniform continuity of $f$ and of $\|\cdot\|_{1}$, we can find an open set $U \subseteq G$ such that for all $r \in U$, we have $\|f(s-r) - f(s)\|_{1} \leq \frac{\e}{L}$.  It can be seen that for this choice of $U$, we have $\|\alpha^{i}_{r}(y_i) -y_i\|_{\Phi_i}^\# \leq \e$ for all $r \in U$, proving the claim.
\end{proof}

\begin{thm}\label{ThmCompactSupp}
    Let $(x_i)^\bullet \in \prod^{\cU} \cal M_i$ be an element of the (generalized) Ocneanu ultraproduct.  Then the following are equivalent.
    \begin{enumerate}
        \item  For every $\e > 0$, there exists $(y_i)^\bullet \in \prod^{\cU} \cal M_i$ such that there exists a compact neighbourhood $K \subseteq \widehat{G}$ of $\hat{e}$ such that $y_i \in M(\alpha^i, K)$ for all $i \in I$ and $\lim_{i \to \cU} \|x_i - y_i\|_{\Phi_i}^\# < \e$.
        \item $(x_i)^\bullet$ is $\alpha$-continuous.
    \end{enumerate}
\end{thm}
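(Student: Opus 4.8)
The statement is an equivalence characterizing $\alpha$-continuity of an element of the Ocneanu ultraproduct in terms of approximability by elements supported (in the Arveson sense) on compact neighbourhoods of $\hat e$. The direction $(1) \Rightarrow (2)$ is the easy one and follows almost immediately from the preceding lemma: if each $y_i$ lies in $M(\alpha^i, K)$ for a common compact neighbourhood $K$, then $(y_i)^\bullet$ is $\alpha$-continuous by that lemma, and $\alpha$-continuous elements form a $\|\cdot\|_\Phi^\#$-closed subspace of the ultraproduct (this is why the continuous part is well-defined), so an element approximable to within every $\varepsilon$ by such $(y_i)^\bullet$ is itself $\alpha$-continuous. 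First I would dispatch this direction in a sentence or two, invoking closedness of the continuous part.

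**The substance is $(2) \Rightarrow (1)$.** Here the natural tool is the mollifier/averaging construction from Arveson spectral theory. Given an $\alpha$-continuous $(x_i)^\bullet$ and $\varepsilon > 0$, I would choose $f \in \mathrm{L}^1(G)$ whose Fourier transform $\widehat f$ is compactly supported in a neighbourhood $K$ of $\hat e$ and which acts as an approximate identity, then set $y_i = \alpha^i_f(x_i) = \int_G f(s)\,\alpha^i_s(x_i)\,d\mu(s)$. The spectral condition $y_i \in M(\alpha^i, K)$ is exactly the content of Arveson's theory: applying a convolution operator whose transform is supported in $K$ lands the output in the spectral subspace for $K$. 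The approximation estimate is where $\alpha$-continuity is used: I would write
\[
\|x_i - y_i\|_{\Phi_i}^\# = \Big\| \int_G f(s)\big(x_i - \alpha^i_s(x_i)\big)\,d\mu(s)\Big\|_{\Phi_i}^\# \le \int_G |f(s)|\,\|x_i - \alpha^i_s(x_i)\|_{\Phi_i}^\#\,d\mu(s),
\]
and then exploit that $f$ is concentrated near the identity $e \in G$ together with the uniform continuity of $s \mapsto \alpha^i_s(x_i)$ coming from $\alpha$-continuity. The key subtlety is that the estimate must be uniform in $i$: I would need to control $\lim_{i\to\cU}\|x_i - \alpha^i_s(x_i)\|_{\Phi_i}^\#$ uniformly for $s$ in a small neighbourhood, which is precisely the quantitative meaning of $(x_i)^\bullet$ being $\alpha$-continuous in the ultraproduct. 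So I would first extract, from continuity of the ultraproduct action, a neighbourhood $U$ of $e$ on which $\lim_{i\to\cU}\|x_i - \alpha^i_s(x_i)\|_{\Phi_i}^\# < \varepsilon$ uniformly, then choose $f$ supported in $U$ with $\int |f| \le 1$ and $\widehat f$ supported in a compact neighbourhood $K$ of $\hat e$.

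**The main obstacle** I expect is reconciling the two support conditions simultaneously — having $f$ concentrated near $e \in G$ (to make the approximation good) while $\widehat f$ is compactly supported near $\hat e \in \widehat G$ (to force the spectral containment $y_i \in M(\alpha^i,K)$). Strictly, a function cannot have both $f$ and $\widehat f$ of compact support, so I would instead use $f$ whose $\widehat f$ is compactly supported in $K$ and which is merely well-concentrated (in $\mathrm{L}^1$-mass) near $e$, taking a sequence of such Fejér-type or band-limited kernels and letting $K$ grow as the concentration improves. Making this tradeoff quantitatively precise — choosing $K$ and $f$ together so that both the spectral condition holds exactly and the uniform-in-$i$ approximation is below $\varepsilon$ — is the crux, and I would handle it by fixing $\varepsilon$, first choosing the $G$-neighbourhood $U$ from continuity, then selecting from a standard family of band-limited approximate identities one whose $\mathrm{L}^1$-tail outside $U$ is small; the compact neighbourhood $K$ is then whatever the chosen kernel's transform is supported on. A secondary technical point is verifying that the averaged element $(y_i)^\bullet$ genuinely lands back in the Ocneanu ultraproduct (i.e. remains $\sigma^\Phi$-continuous), which follows since $\alpha$ commutes with the modular flow under the $\Phi$-preserving hypothesis.
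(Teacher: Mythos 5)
Your proposal is correct and follows essentially the same route as the paper: the direction $(1)\Rightarrow(2)$ via the preceding lemma together with a triangle-inequality/closedness argument, and $(2)\Rightarrow(1)$ by mollifying with kernels $F$ on $G$ whose transforms are bump functions compactly supported near $\hat e$ in $\widehat{G}$, letting the spectral support grow as the approximation improves. If anything, your discussion of the tension between band-limitation and $\mathrm{L}^1$-concentration near $e$ is more explicit than the paper's, which dispatches the approximate-identity estimate with ``it is clear that.''
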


\begin{proof}
    First, we show (1) $\implies$ (2).  Let $(x_i)$ satisfy (1) and let $\e > 0$ be given.  Pick  $(y_i)^\bullet \in \prod^{\cU} \cal M_i$ such that there exists a compact neighbourhood $K \subseteq \widehat{G}$ of $\hat{e}$ such that $y_i \in M(\alpha^i, K)$ for all $i \in I$ and $\lim_{i \to \cU} \|x_i - y_i\|_{\Phi_i}^\# < \frac{\e}{4}$.  Pick, by the previous lemma, $U \subseteq G$ open such that $r \in U$ implies $\|\alpha^{i}_{r}(y_i) -y_i\|_{\Phi_i}^\# < \frac{\e}{2}$. Then, for this choice of $U$, we have 
    \begin{align}
        \|\alpha^{i}_{r}(x_i) - x_i\|_{\Phi_i}^\# &\leq \|\alpha^{i}_{r}(x_i) - \alpha^{i}_{r}(y_i)\|_{\Phi_i}^\# + \|\alpha^{i}_{r}(y_i) -y_i\|_{\Phi_i}^\# + \|y_i - x_i\|_{\Phi_i}^\# \nonumber \\
        &= 2\|x_i - y_i\| + \|\alpha^{i}_{r}(y_i) -y_i\|_{\Phi_i}^\# \nonumber \\
        &\leq 2\frac{\e}{4} + \frac{\e}{2} \nonumber \\
        &= \e \nonumber
    \end{align}
    for all $r \in U$.  Therefore $(x_i)^\bullet$ is $\alpha$-continuous.

    Now we show (2) $\implies$ (1).  Assume $(x_i)^\bullet$ is $\sigma$-continuous.  Since $\widehat{G}$ is a locally compact Hausdorff topological space, it is $T_3$ and therefore admits an increasing net $(K_j)_{j \in J}$ of compact neighbourhoods of $\hat{e}$ with union $\widehat{G}$.  By Tietze's Extension Theorem, there is a net $(f_{j})$ of bump functions with support $K_j$.  Scale these bump functions so that their Fourier transforms $F_j := \widehat{f_j}$ satisfy $\|F_j\|_1 = 1$.  For each $j \in J$, define $x_j := \alpha_{F_j}(x) = (\alpha_{F_j}(x_i))^\bullet$.  Then it is clear that $\lim_{j \in J} \|x_j - x\|_{\Phi}^\# = 0$ and, by construction, $(x_i)_j \in M(\alpha^i, K_j)$ which is compact.  
\end{proof}

Our final lemma identifies those elements of the crossed product which have compact Arveson spectrum.

\begin{lem}\label{LemSpectrumDual}
    Let $\xi \in C_c(G, \cal H_{\Phi}) \subseteq \cal M \rtimes_{\alpha} G$. Then $\xi \in M(\widehat{\alpha}, \mathrm{supp}(f))$. 
\end{lem}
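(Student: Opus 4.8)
The strategy is to identify the Arveson spectral condition with a pointwise support condition on $G \cong \widehat{\widehat G}$, exploiting that the dual action $\widehat\alpha$ is spatially implemented by the character-multiplication unitaries $v(p)$. Throughout I write $\supp(\xi)$ for the (compact) support of $\xi$ regarded as an $\cal H_\Phi$-valued function on $G$ (this is the $\supp(f)$ of the statement). First I would record how $\widehat\alpha$ acts on $\xi$. Since $\widehat\alpha_p$ is conjugation by $v(p)$, and a direct check shows $v(p)$ is multiplicative for the convolution product on $C_c(G,\cal M)$ — the characters cancel via $\overline{p(st)}\,\overline{p(t^{-1})} = \overline{p(s)}$ — conjugating by $v(p)$ the left-multiplication operator attached to $\xi$ produces the left-multiplication operator attached to $v(p)\xi$. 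Hence, as a vector, $(\widehat\alpha_p\xi)(s) = \overline{p(s)}\xi(s)$.

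Second, for $F \in \mathrm{L}^1(\widehat G)$ I would integrate this identity against $F$ and pull the scalar out of $\xi(s)$:
\[
(\widehat\alpha_F\xi)(s) = \Big(\int_{\widehat G} F(p)\,\overline{p(s)}\,d\widehat\mu(p)\Big)\xi(s) = \widehat F(s)\,\xi(s),
\]
where $\widehat F \in C_0(G)$ is the Fourier transform of $F$ under the Pontryagin identification $G \cong \widehat{\widehat G}$; the conjugate $\overline{p(s)}$ matches the convention $\widehat f(p) = \int_G f(s)\overline{p(s)}\,d\mu(s)$ fixed above, so no sign discrepancy appears. Thus $\widehat\alpha_F(\xi)$ is the operator attached to the pointwise product $\widehat F \cdot \xi$; in particular, whenever $\widehat F$ vanishes on $\supp(\xi)$ one has $\widehat F\cdot\xi \equiv 0$, and therefore $\widehat\alpha_F(\xi) = 0$.

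Finally, I would unwind the spectral subspace: $\xi \in M(\widehat\alpha,\supp(\xi))$ means $\operatorname{Sp}_{\widehat\alpha}(\xi)\subseteq \supp(\xi)$, i.e. every $s_0 \notin \supp(\xi)$ admits $F \in \mathrm{L}^1(\widehat G)$ with $\widehat\alpha_F(\xi)=0$ but $\widehat F(s_0)\neq 0$. Given such an $s_0$, the point $s_0$ and the compact set $\supp(\xi)$ are disjoint, so regularity of the Fourier algebra $A(G) = \{\widehat F : F \in \mathrm{L}^1(\widehat G)\}$ (equivalently, regularity of the commutative Banach algebra $\mathrm{L}^1(\widehat G)$, whose Gelfand spectrum is $G$) furnishes $h = \widehat F \in A(G)$ with $h(s_0) \neq 0$ and $h \equiv 0$ on $\supp(\xi)$. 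By the previous step $\widehat\alpha_F(\xi) = 0$ while $\widehat F(s_0)\neq 0$, so $s_0 \notin \operatorname{Sp}_{\widehat\alpha}(\xi)$; as $s_0$ was arbitrary, this is the claim.

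The first two steps are bookkeeping once the spatial picture is in place; the only genuine input is the harmonic-analytic separation in the last step. I expect the main subtlety to be ensuring that the separating function is genuinely of the form $\widehat F$ with $F \in \mathrm{L}^1(\widehat G)$ — not merely an arbitrary bump function — which is exactly what regularity (a Tauberian-type property) of $\mathrm{L}^1$ of a locally compact abelian group supplies, together with keeping the Fourier/Plancherel conventions and the identification $G \cong \widehat{\widehat G}$ consistent so that $\widehat\alpha_F(\xi) = \widehat F\cdot\xi$ holds on the nose.
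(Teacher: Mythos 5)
Your proof is correct and rests on the same key computation as the paper's: $\widehat\alpha_F(\xi)(s)=\widehat F(s)\xi(s)$, i.e.\ the dual action integrated against $F$ acts by pointwise multiplication by the Fourier transform. The only difference is cosmetic: the paper certifies membership in $M(\widehat\alpha,\supp(\xi))$ by checking $\widehat\alpha_f(\xi)=\xi$ whenever $\widehat f\equiv 1$ on $\supp(\xi)$, while you use the equivalent ``vanishing'' characterization ($\widehat\alpha_F(\xi)=0$ whenever $\widehat F\equiv 0$ on $\supp(\xi)$, plus regularity of $A(G)$ to separate points from the compact support); both are standard equivalent descriptions of the Arveson spectral subspace.
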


\begin{proof}
    We want to show that for any $f \in \mathrm{L}^1(\widehat{G})$ such that $\widehat{f}(s) = 1$ for all $s \in \mathrm{supp}(f)$, we have $\widehat{\alpha}_{f}(\xi) = \xi$. Let $f \in \mathrm{L}^1(\widehat{G})$ be given such that $\widehat{f}(s) = 1$ for all $s \in \mathrm{supp}(f)$. Then we have 
    \[
    \widehat{\alpha}_{f}(\xi(s)) = \int_{\widehat{G}} f(p)\alpha_{p}(\xi(s))d\widehat{\mu}(p) = \int_{\widehat{G}} f(p)\overline{p(s)}\xi(s)d\widehat{\mu}(p) = \widehat{f}(s)\xi(s)
    \]
    by Plancherel's formula. But by assumption, this final term is $\xi(s)$.
\end{proof} 

We now have the ingredients to prove our main theorem.

\begin{proof}[Proof of Theorem \ref{MainThm}]
    By definition, $\prod_{\widehat{\alpha}-c}^{\cU} (\mathcal{M}_i \rtimes_{\alpha_i} G, \widehat{\Phi}_i)$ is the continuous part of the the ultraproduct of $\prod^{\cU}_{\Oc} (\mathcal{M}_i \rtimes_{\alpha_i} G, \widehat{\Phi}_i)$ with respect to $\widehat{\alpha}$.  By Theorem \ref{ThmCompactSupp}, we have that $\prod_{\widehat{\alpha}-c}^{\cU} (\mathcal{M}_i \rtimes_{\alpha_i} G, \widehat{\Phi}_i)$ is the closure of the sequences of the form $(\xi_i)$ for which there is a compact $K \subseteq G$ such that the spectral support of $\xi_i$ is contained in $K$ for all $i$. By Lemma \ref{LemSpectrumDual}, this implies the support of $\xi_i$ is contained in $K$.  Therefore $\xi_{\cU}$ defined by,
    \[
    \xi_{\cU}(s) := \lim_{i \to \cU} \xi_i(s)
    \]
    is a well defined continuous function from $G$ to $\prod^{\cU}_{\Oc} (\mathcal{M}_i, \Phi_i)$. Furthermore, the range of $\xi_{\cU}$ consists of $\alpha$-continuous elements since $\alpha^{\cU} = \lim_{i \to \cU} \alpha^i$ and $\alpha_i(r)\xi_i(s) = \xi_i(r-s)$ for all $i$, so $\alpha^{\cU}(r)\xi_{\cU}(s) = \xi_{\cU}(r-s)$.  

    By the characterization of $(\prod^{\cU}_{\alpha-c} (\mathcal{M}_i, \Phi_i)) \rtimes_{\alpha^{\cU}} G$ above, and taking closures, the statement is proved. 

    Notice by \cite{HaDual1} and \cite{HaDual2}, the proof given above identifies the standard forms of $(\prod^{\cU}_{\alpha-c} (\mathcal{M}_i, \Phi_i)) \rtimes_{\alpha^{\cU}} G$ and $\prod_{\widehat{\alpha}-c}^{\cU} (\mathcal{M}_i \rtimes_{\alpha_i} G, \widehat{\Phi}_i)$ which are, in turn, in standard form by \cite[Theorem 7.8]{Arul}.  Therefore, the isomorphism in question is canonical.
\end{proof}

We conclude this section by considering the situation for locally compact groups which are not (necessarily) abelian.  Let $G$ be a locally compact group and let $(\cal M_i, \Phi_i, \alpha^i)_{i \in I}$ be a family of $G$-\wstar-dynamical systems.  Notice that \ref{LemmaCrossToOc} does not require the group involved to be abelian.  Thus we still have an embedding $(\prod^{\cU}_{\alpha-c} (\mathcal{M}_i, \Phi_i)) \rtimes_{\alpha^{\cU}} G \hookrightarrow \prod^{\cU}_{\Oc} (\mathcal{M}_i \rtimes_{\alpha_i} G, \widehat{\Phi}_i)$.  On the other hand, when $G$ is not abelian, the Pontryagin dual (that is, the set of characters) of $G$ is no longer a group.  Thus we no longer have a dual group with respect to which to take the continuous part of $\prod^{\cU}_{\Oc} (\mathcal{M}_i \rtimes_{\alpha_i} G, \widehat{\Phi}_i)$.

On the other hand, there are alternative forms of Takesaki duality (Theorem \ref{TheoremTakesakiDuality}) using \textbf{coactions} $\delta$ of $G$ in the place of the dual action $\widehat{\alpha}$ of $\widehat{G}$.  We point the reader to \cite{Landstad1}, \cite{Landstad2}, and \cite{NK} for instance.  Analogues of Arveson spectral theory for actions of locally compact non-abelian groups also exist as in \cite{NK} and \cite{GLR}.  We therefore conjecture a generalization of Theorem \ref{MainThm}, where the continuous part on the right hand side of the asserted isomorphism is taken with respect to some notion of continuity for the dual co-action $\delta$.  Based on the proof of the abelian case given here, the notion of continuity should be equivalent to being an SOT limit of elements with compact Arveson spectrum.  This would also suggest an analogous model theory for \wstar-cosystems $(\cal M, \delta)$ and, in turn, \wstar-algebraic quantum groups.

\section{Computability}\label{SectionComputability}

In light of the volume of recent work on computable continuous model theory of operator algebras, we would be remiss to end this paper without at least a short discussion of computability.  Obviously, the above axiomatization is not computable for arbitrary $G$ as there are too many elements of our language.  We see in this section an interesting phenomenon where the \textbf{computable axiomatizability} of a class relates to the \textbf{computable presentability} of an ancillary object, namely $\mathrm{L}^1(G)$.

\subsection{Computable Axiomatizations}

We first consider the situation when $G$ is discrete.  Notice that if we wish to write down the axioms for $T_{G-\vNa}$ as given in Section \ref{SectionDiscrete}, we need a way to enumerate the elements of $G$. Furthermore, for Axiom (5), we need a way to compute products in this enumeration.

\begin{prop}
    Let $G$ be a computably presented discrete group with decidable word problem. Then $T_{G-\vNa}$ is computably axiomatizable.
\end{prop}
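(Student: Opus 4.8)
The plan is to verify that every component of the language $\cal L_{G-\vNa}$ and every axiom of $T_{G-\vNa}$ can be effectively enumerated, with all associated moduli of uniform continuity computable, once $G$ is presented with decidable word problem. First I would recall that $T_{\vNa}$ is computably axiomatizable, a fact established in \cite{Arul}; this handles the purely von-Neumann-algebraic part of the language and lets me concentrate entirely on the new symbols $\alpha_{s,n}\colon S_n \to S_n$ and the dynamical axioms (1)--(6). The language additions are indexed by pairs $(s,n) \in G \times \bbN$, so a computable presentation of $G$ (an effective bijection $\bbN \to G$) gives an effective enumeration of the function symbols $\alpha_{s,n}$; since each $\alpha_{s,n}$ maps $S_n$ to $S_n$ and is $\|\cdot\|_\Phi$-isometric by Axiom (6), its modulus of continuity is the identity modulus, which is trivially computable and uniform across all symbols.

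Next I would walk through the six axiom schemes and confirm each is effectively listable as a sequence of $\cal L_{G-\vNa}$-conditions. Schemes (1)--(4) and (6) are quantifier-prefixed atomic formulas whose only free parameters are $s \in G$, $n \in \bbN$, and (in scheme (4)) $\lambda \in \bbC$; restricting $\lambda$ to the computable rationals $\bbQ + i\bbQ$ suffices by density and continuity, so enumerating these axioms reduces to enumerating tuples from $G \times \bbN$ (and $\bbQ+i\bbQ$), which is computable given the presentation of $G$. The one scheme that genuinely consumes the word-problem hypothesis is Axiom (5), $\sup_{x\in S_n} d_{n^2}(\alpha_s(\alpha_r(x)),\alpha_{sr}(x))$: to write this down as a formula I must name the symbol $\alpha_{sr,\cdot}$, i.e. I must compute the product $sr$ in the chosen enumeration of $G$. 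This is exactly a query to the word problem, and its decidability is what makes the index $sr$ effectively computable from the indices of $s$ and $r$. I would emphasize this as the crux: without a solvable word problem one can enumerate the symbols but cannot effectively match $\alpha_s\circ\alpha_r$ to the correct $\alpha_{sr}$, so the composition axioms fail to be computably generated.

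The main obstacle, then, is not analytic but bookkeeping of the uniform-continuity data that a computable axiomatization formally requires: I must check that the moduli attached to the predicates $d_k$ and to $\ip{\cdot}{\cdot}_\Phi$ appearing in the axioms are computable and, crucially, uniform over the infinite families of symbols. Because all the $\alpha_{s,n}$ are isometries and the metric structure on the sorts $S_n$ is already computably controlled in $T_{\vNa}$, these moduli are inherited from the base theory and require no new data depending on $s$; I would make this explicit so that the verification does not secretly smuggle in noncomputable continuity information. Finally I would assemble the pieces: the union of the computably axiomatizable base $T_{\vNa}$ with the effectively enumerable new schemes (1)--(6), whose only nontrivial effectivity input is the decidability of products in $G$, is itself a computably enumerable set of conditions, and by the earlier proposition these conditions axiomatize precisely the class of $G$-\wstar-dynamical systems; hence $T_{G-\vNa}$ is computably axiomatizable.
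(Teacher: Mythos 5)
Your proposal is correct and follows essentially the same route as the paper, which justifies the proposition only by the preceding remark that one needs an effective enumeration of $G$ to list the symbols $\alpha_{s,n}$ and the decidable word problem to identify the symbol $\alpha_{sr,\cdot}$ in Axiom (5). Your additional bookkeeping (rational $\lambda$, inherited moduli of continuity from $T_{\vNa}$) fills in details the paper leaves implicit but does not change the argument.
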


The pair of definitions to follow are adapted directly from \cite{GHHyp}. 

\begin{defn}
    A presentation $A^\#$ of a Banach $^*$-algebra $A$ is a pair $(A,(a_n)_{n \in \bbN})$, where $\{ a_n : n \in \bbN \}$ is a subset of $A$ that generates $A$ as a Banach $^*$-algebra. Elements of the sequence $(a_n)$ are referred to as \textbf{special points} of the presentation while elements of the form $p(a_{i_1}, \ldots , a_{i_k})$ for $p$ a \textbf{rational $^*$-polynomial} (that is, a $^*$-polynomial with coefficients from $\bbQ(i)$) are referred to as generated points of the presentation.
\end{defn}

\begin{defn}
    If $A^\#$ is a presentation of $A$ and $\mathbf{d}$ is a Turing degree, then $A^\#$ is a $\mathbf{d}$-computable presentation if there is a $\mathbf{d}$-algorithm such that, given a rational point $p \in A^\#$ and $k \in \bbN$, returns $q \in \bbQ$ such that $| \|p\|_1 - q| < 2^{-k}$.
\end{defn}

By restricting to sorts that correspond to points in the presentation, the following is clear.

\begin{prop}
    Let $G$ be a locally compact group. If $\mathrm{L}^1(G)$ admits a computable presentation, then the class of $G$-systems is computably axiomatizable.
\end{prop}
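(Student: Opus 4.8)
The plan is to pass from the full, uncountable language $\cal L_{G-\vNa}$ to its restriction to the countable set of rational points furnished by the presentation, and then to check that this restriction both axiomatizes the same class and carries computable numerical data. Fix a computable presentation $\mathrm{L}^1(G)^\# = (\mathrm{L}^1(G),(a_n)_{n \in \bbN})$. Its generated points, namely the rational $^*$-polynomials $p(a_{i_1},\dots,a_{i_k})$ with coefficients in $\bbQ(i)$, form a countable, $\|\cdot\|_1$-dense $^*$-subalgebra $D \subseteq \mathrm{L}^1(G)$ that can be effectively enumerated and is closed under the algebra operations with computable indices. Let $\cal L'$ be the sublanguage of $\cal L_{G-\vNa}$ whose sorts $S_{f,n}$ and function symbols $\pi_{h,n,f}$, $\eta_{f,n}$ are retained only for $f,h \in D$.

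First I would verify that $\cal L'$ is expressive enough, by showing that the full language is a definitional expansion of the restricted theory $T' := T_{G-\vNa}\!\upharpoonright\!\cal L'$. Indeed, for arbitrary $f \in \mathrm{L}^1(G)$ and any $f_k \in D$ with $f_k \to f$, Axiom (4) places $S_{f,n}$ within Hausdorff distance $n\|f-f_k\|_1 \to 0$ of $S_{f_k,n}$, so $S_{f,n}$ is recovered from the sorts indexed by $D$; likewise, since $h \mapsto \pi_h$ has operator norm bounded by $\|h\|_1$ (Axioms (1) and (3)), the map $\pi_{h,n,f}$ for arbitrary $h$ is the norm limit of those $\pi_{h',n,f}$ with $h' \in D$. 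Hence a model of $T'$ canonically expands to a model of $T_{G-\vNa}$ and conversely, and by the earlier proposition characterizing the models of $T_{G-\vNa}$, the models of $T'$ are exactly the $G$-\wstar-dynamical systems.

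Next I would check that $T'$ is computably axiomatizable. The $T_{\vNa}$ fragment is computable by \cite{Arul}. For the new axioms, the purely algebraic content of the homomorphism axioms (1) refers only to the $^*$-algebra operations on $D$ — sums, convolution products $hf$, and involutions — all of which remain in $D$ with computable indices. The essential data are the numerical parameters: the sort radii $\|f\|_1 n$, the operator-norm bounds $\|h\|_1$ of Axiom (3), and the Hausdorff bounds $n\|f-h\|_1$ of Axiom (4). Each of these is the $\|\cdot\|_1$-norm of a rational point of $\mathrm{L}^1(G)^\#$ (note that $f-h \in D$), and by the definition of a computable presentation every such norm can be approximated to within $2^{-k}$, uniformly and effectively. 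Consequently each axiom, together with its numerical bound to arbitrary precision, is produced by an algorithm, so $T'$ — and hence the class of $G$-systems — is computably axiomatizable.

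The main obstacle, and the real content of the statement, is precisely the computation of the norm parameters appearing in Axioms (3) and (4): absent a way to approximate $\|h\|_1$ and $\|f-h\|_1$ to arbitrary precision, these conditions cannot be written with computable bounds, and this is exactly what the computable presentation of $\mathrm{L}^1(G)$ supplies. The density-and-definability argument of the second paragraph, though it must be stated with care, is routine once Axiom (4) is in hand. One technical wrinkle to treat gently is the inclusion functions $\eta_{f,n}$, present only for $f$ with $|f(s)| \le 1$; these are retained for the rational points meeting the relevant bound, in analogy with the other symbols.
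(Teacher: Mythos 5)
Your proposal is correct and follows the same route as the paper, which disposes of this proposition in a single sentence ("by restricting to sorts that correspond to points in the presentation, the following is clear"); you have simply supplied the details that the paper leaves implicit, correctly identifying that the computable presentation is needed exactly to approximate the norm parameters $\|f\|_1$, $\|h\|_1$, $\|f-h\|_1$ occurring in the sort radii and in Axioms (3) and (4), while density in $\|\cdot\|_1$ ensures the restricted language still pins down the same class.
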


More generally:

\begin{prop}
    Let $G$ be a locally compact group. If $\mathrm{L}^1(G)$ admits a $\mathbf{d}$-computable presentation, then the class of $G$-systems is $\mathbf{d}$-computably axiomatizable.
\end{prop}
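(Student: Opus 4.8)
The plan is to observe that the final Proposition is a near-immediate relativization of the preceding one, so the proof should essentially consist of checking that every use of ``computable'' in the prior argument can be replaced by ``$\mathbf{d}$-computable'' without any new obstruction. First I would recall the structure of the axioms of $T_{G\text{-}\vNa}$ from Section~\ref{SectionLCG}: the nontrivial parameters appearing in the axioms are the functions $f, h \in \mathrm{L}^1(G)$ indexing the sorts $S_{f,n}$ and the function symbols $\pi_{h,n,f}$, together with the real constants $\|f\|_1$, $\|h\|_1$, and $\|f-h\|_1$ that appear in the radii of the sorts (axiom scheme defining radius $\|f\|_1 n$), in the operator-norm bounds (axiom (3)), and in the Hausdorff-distance axioms (axiom (4)). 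As in the proof of the unrelativized Proposition, one restricts the (otherwise uncountable) family of sorts and function symbols to those indexed by the \emph{generated points} $p(a_{i_1},\dots,a_{i_k})$ of a fixed presentation $\mathrm{L}^1(G)^{\#}$, which is legitimate because these points are $\|\cdot\|_1$-dense and hence the restricted language still axiomatizes the same class.

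Next I would verify that producing the axioms is effective relative to $\mathbf{d}$. The finitely many syntactic pieces of each axiom (the $\sup_{x}\inf_{y}$ quantifiers over the relevant sorts, the $^*$-polynomial combinations, the $\|\cdot\|_\Phi$ expressions) are computable uniformly from the indices, exactly as in the computable case; the only place a nontrivial computation is required is in evaluating the real constants $\|f\|_1$, $\|h\|_1$, and $\|f-h\|_1$ to within $2^{-k}$ so that the corresponding metric axioms can be written as computable (here $\mathbf{d}$-computable) sentences with rational moduli. But this is precisely what a $\mathbf{d}$-computable presentation of $\mathrm{L}^1(G)$ supplies, by definition: given a rational point $p$ and $k$, the $\mathbf{d}$-algorithm returns a rational $q$ with $\bigl|\,\|p\|_1 - q\,\bigr| < 2^{-k}$. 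Since $f$, $h$, and $f-h$ are themselves rational points (the latter because the generated points are closed under the $^*$-algebra operations, in particular subtraction), their norms are approximable to arbitrary precision by the $\mathbf{d}$-oracle.

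I would then assemble these observations into the conclusion that the map sending an index tuple $(f, h, n, \lambda, \dots)$ to the corresponding axiom of $T_{G\text{-}\vNa}$, together with a $2^{-k}$-rational approximation to each real constant occurring in it, is computed by a single $\mathbf{d}$-algorithm. This exhibits $T_{G\text{-}\vNa}$ as a $\mathbf{d}$-computable theory in the sense required, establishing $\mathbf{d}$-computable axiomatizability. I expect the main (and indeed only substantive) obstacle to be purely bookkeeping: confirming that the class of generated points is closed under the operations appearing in the axioms so that \emph{every} constant to be approximated is genuinely a rational point to which the $\mathbf{d}$-presentation applies, and that the moduli of uniform continuity implicit in the metric sorts can be taken rational and uniform in the indices. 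Both follow from the same reasoning already used in the non-relativized Proposition, with the relativization being transparent because the oracle $\mathbf{d}$ is carried along unchanged through every step; no new analytic input is needed.
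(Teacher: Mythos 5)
Your proposal is correct and follows exactly the route the paper intends: the paper offers no explicit proof of this proposition, merely prefacing the unrelativized version with ``By restricting to sorts that correspond to points in the presentation, the following is clear'' and then stating the $\mathbf{d}$-version as an immediate generalization. Your write-up simply fills in that one-line justification --- restricting the sorts and function symbols to generated points of the presentation, noting that the only real constants requiring approximation are $\|f\|_1$, $\|h\|_1$, and $\|f-h\|_1$, and observing that the $\mathbf{d}$-oracle supplies these --- so it is the same argument, spelled out in more detail than the paper itself provides.
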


Note, by taking indicator functions of intervals with rational endpoints as our special points, that $\mathrm{L}^1(\bbR)$ has such a computable presentation. Thus we have:

\begin{cor}
    The class of $\mathbb{R}$-\wstar-dynamical systems is computably axiomatizable.
\end{cor}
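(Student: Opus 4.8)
The plan is to derive the statement directly from the Proposition above that computable presentability of $\mathrm{L}^1(G)$ implies computable axiomatizability of the class of $G$-systems, applied with $G = \bbR$. So the whole task reduces to justifying the parenthetical remark that $\mathrm{L}^1(\bbR)$ admits a computable presentation as a Banach $^*$-algebra. I would take as special points the indicator functions $\mathbf{1}_{[a,b]}$ with $a < b$ in $\bbQ$; these are enumerable, being indexed by pairs of rationals, so they form a presentation in the sense of the Definition above, and it remains to check that they generate $\mathrm{L}^1(\bbR)$ and that the norm is computable on generated points.

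Generation is immediate: finite rational linear combinations of the $\mathbf{1}_{[a,b]}$ are exactly the step functions with rational breakpoints, and these are $\|\cdot\|_1$-dense in $\mathrm{L}^1(\bbR)$, so the closed linear span --- and a fortiori the closed $^*$-subalgebra generated --- is all of $\mathrm{L}^1(\bbR)$.

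The heart of the matter is the norm computation. The key observation is that the class $\mathcal{P}$ of compactly supported, piecewise-polynomial functions with breakpoints in $\bbQ$ and piecewise coefficients in $\bbQ(i)$ is closed under all the operations defining rational $^*$-polynomials, and that each operation acts computably on the finite data (breakpoints plus coefficient lists) representing a member of $\mathcal{P}$. Indeed, $\bbQ(i)$-scalar multiplication and addition are clear; the involution $f \mapsto \overline{f(-\,\cdot\,)}$ sends $\mathbf{1}_{[a,b]}$ to $\mathbf{1}_{[-b,-a]}$ and in general reflects the breakpoints and conjugates the coefficients; and the convolution product of two elements of $\mathcal{P}$ again lies in $\mathcal{P}$, with breakpoints among the pairwise sums of the factors' breakpoints and coefficients obtained by integrating products of polynomials over rational intervals, which is finite rational arithmetic. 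Thus, by structural recursion on the syntactic form of a rational $^*$-polynomial $p$ in the special points, one computes an exact finite representation of $p$ as an element of $\mathcal{P}$. To finish, on each of the finitely many subintervals delimited by consecutive breakpoints the function $|p|^2 = (\operatorname{Re} p)^2 + (\operatorname{Im} p)^2$ is a nonnegative rational polynomial, so $|p| = \sqrt{|p|^2}$ is continuous there with a computable sup bound and modulus of continuity; hence $\int |p|$ over each subinterval is approximable to within any prescribed tolerance by quadrature with controlled error, and summing the finitely many contributions yields a rational within $2^{-k}$ of $\|p\|_1 = \int_{\bbR} |p|$.

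I expect the main obstacle to be the bookkeeping for convolution: verifying that it preserves $\mathcal{P}$ and produces the coefficient data by a terminating rational computation, since this is where the multiplicative structure enters and where one must ensure that no irrational or otherwise noncomputable quantities are introduced. By contrast, the generation step and the final piecewise numerical integration are routine, so once the convolution bookkeeping is in place the Proposition delivers the conclusion.
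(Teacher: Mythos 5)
Your proposal is correct and follows exactly the route the paper takes: the paper's entire justification is the one-line remark that taking indicator functions of intervals with rational endpoints as special points gives a computable presentation of $\mathrm{L}^1(\bbR)$, which then feeds into the preceding proposition. You have simply supplied the details the paper omits (density of rational step functions, closure of rational piecewise polynomials under convolution and involution, and norm computation by quadrature), and these all check out.
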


\subsection{Computable Presentations}

Next, we provide a useful method for producing computable presentations of \wstar-probability spaces.  But first, we need a way to quantify the complexity of a group action on a presentation of a metric structure.  For the remainder of this paper, $G$ is a discrete group.

\begin{defn}
    Let $A$ be a metric structure with a computable presentation $A^\sharp$ given. Let $G$ be a computably presented group acting on $A$ by an action $\alpha$. We say that $\alpha$ is \textbf{computable} if there is an algorithm that takes an element $s \in G$, a rational point $p$ of $A^\sharp$ and $k \in \bbN$ and returns a rational point $q$ of $A^\sharp$ such that $d(\alpha_{s}(p), q) < 2^{-k}$.
\end{defn}

\begin{thm}\label{CrossedPresentation}
    Let $(\cal M,\Phi)$ be a von Neumann algebra equipped with a faithful normal semifinite weight that admits a computable presentation $\cal M^\sharp = (\cal M, (a_n))$ and let $G$ be a finitely presented group with solvable word problem.  Suppose $\alpha$ is an action on $\cal M$ that is computable.  Then $\cal M \rtimes_{\alpha} G$ admits a computable presentation. 
\end{thm}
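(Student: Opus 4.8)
The plan is to build the presentation of $\cal M \rtimes_{\alpha} G$ out of the generators that exhibit the crossed product as generated by a copy of $\cal M$ together with the implementing unitaries. Fix a finite generating set $s_1, \dots, s_k$ of $G$ coming from its finite presentation. I would take as special points for $\cal M \rtimes_{\alpha} G$ the points $\pi_{\alpha}(a_n)$ for each special point $a_n$ of $\cal M^{\sharp}$, together with the finitely many unitaries $\lambda_{\alpha}(s_1), \dots, \lambda_{\alpha}(s_k)$. Since $\lambda_{\alpha}(s)^{*} = \lambda_{\alpha}(s^{-1})$ and the $s_j$ generate $G$, every $\lambda_{\alpha}(s)$ with $s \in G$ is a rational $*$-polynomial in these points; combined with the fact that $\pi_{\alpha}(\cal M)$ and $\{\lambda_{\alpha}(s)\}_{s \in G}$ generate $\cal M \rtimes_{\alpha} G$, the chosen special points generate the crossed product, and their generated (rational) points range over a $\|\cdot\|$-dense copy of the algebraic crossed product with coefficients drawn from the generated points of $\cal M^{\sharp}$.

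The next step is to put an arbitrary rational point $p$ into a normal form $\sum_{s \in F} \pi_{\alpha}(x_s)\lambda_{\alpha}(s)$ with $F \subseteq G$ finite. For this I would use the two structural relations $\lambda_{\alpha}(s)\lambda_{\alpha}(t) = \lambda_{\alpha}(st)$ and the covariance relation $\lambda_{\alpha}(s)\pi_{\alpha}(x) = \pi_{\alpha}(\alpha_{s}(x))\lambda_{\alpha}(s)$, repeatedly commuting all unitaries to the right and collecting the $\cal M$-valued coefficients. Solvability of the word problem makes this effective: it lets me compute products $st$, recognize which words name the same group element, and thereby determine $F$ and group the terms correctly. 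Because $\alpha$ is a computable action and the operations $+, \cdot, *$ of $\cal M^{\sharp}$ are computable, each coefficient $x_s$ --- a finite combination of the $a_n$ under sums, products, adjoints, and the maps $\alpha_{s_j}$ --- is a point of $\cal M^{\sharp}$ that I can approximate to within any prescribed tolerance as a rational point.

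Finally, I would compute the norm of the normal form. The key tool is Proposition \ref{dualweight}: writing $p = m(a)$ for the $C_c(G,\cal M)$-function $a$ with $a(s) = x_s$, the dual weight satisfies $\widehat{\Phi}(m(a^{\sharp} a)) = \Phi((a^{\sharp} * a)(e))$, so $\widehat{\Phi}(p^{*}p)$ equals $\Phi$ evaluated at the value at $e$ of $a^{\sharp} * a$, which is a finite sum of products $\alpha_{t}(x_s^{*} x_{s'})$ of coefficients I can already approximate. Since $(\cal M, \Phi)$ is computably presented and $\alpha$ is computable, this yields an algorithm approximating $\|p\|_{\widehat{\Phi}}$, and hence the inner products $\langle \cdot, \cdot\rangle_{\widehat{\Phi}}$ on generated points, to arbitrary precision. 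The main obstacle is passing from these weight data to the norm demanded by the presentation: the $\|\cdot\|_{\widehat{\Phi}}$-computations are immediate from Proposition \ref{dualweight}, but the operator norm on a crossed product has no simple closed form. I would resolve this as in the passage from trace/weight data to operator norm in \cite{Arul} and \cite{GHHyp}: realize the normal form on $\mathrm{L}^2(G, \cal H_{\Phi})$, in which $\cal M \rtimes_{\alpha} G$ sits in standard form by \cite{HaDual1} and \cite{HaDual2}, and recover the operator norm of a fixed rational point from the computable weight via standard monotone spectral approximation. Verifying that this recovery is genuinely computable --- and not merely lower semicomputable --- uniformly on the generated points is where I expect the real work to lie.
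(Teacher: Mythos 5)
Your core argument is the paper's argument: take as special points the images $\pi_{\alpha}(a_n)$ together with group unitaries, use $\lambda_\alpha(s)\lambda_\alpha(t)=\lambda_\alpha(st)$, $\lambda_\alpha(s)^*=\lambda_\alpha(s^{-1})$ and the covariance relation to push all unitaries to one side and reach the normal form $\sum_{s\in F}\pi_\alpha(x_s)\lambda_\alpha(s)$ (solvability of the word problem being what lets you collect terms over genuinely equal group elements), and then use Proposition \ref{dualweight} to reduce the dual weight of a rational point to $\Phi$ of the identity coefficient, which is a polynomial combination of $\alpha_s$'s applied to rational points of $\cal M^\sharp$ and hence approximable by computability of $\alpha$ and of $\cal M^\sharp$. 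The paper enumerates all of $G$ as special unitaries rather than only a finite generating set, and pushes unitaries to the left rather than the right; neither difference matters.

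Where you part from the paper is the final paragraph, and there you have manufactured an obstacle that is not present. The notion of computable presentation in force here (inherited from \cite{Arul} and \cite{GHHyp}) asks you to approximate the weight-induced norm $\|p\|_{\widehat{\Phi}}^{\#}$ of a rational point, not its operator norm: the metric on the sorts is the $\|\cdot\|_{\Phi}$-metric, and operator-norm information enters only through the formal assignment of rational points to sorts, which requires no computation on the algebra. This is exactly why the paper's proof forms $x^*x+xx^*$ and applies the dual weight --- the result is $(\|x\|_{\widehat{\Phi}}^{\#})^2$, and nothing further is needed. Your proposed detour through ``monotone spectral approximation'' to recover the operator norm from weight data would indeed run into the semicomputability problem you anticipate, and is not how \cite{Arul} or \cite{GHHyp} proceed either; but since the presentation never demands the operator norm, the step you flag as ``where the real work lies'' simply evaporates. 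Replace $\widehat{\Phi}(p^*p)$ by $\widehat{\Phi}(p^*p+pp^*)$ in your third step, drop the last paragraph, and your proof coincides with the paper's.
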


\begin{proof}
    Fix a computable enumeration $s_n$ of $G$. This can be done computably because $G$ is finitely presented and has a solvable word problem.  Take the presentation $(\cal M \rtimes_{\alpha} G, (b_n))$ with special points given by $b_{2n} = \pi(a_n)$ and $b_{2n+1} = u_{s_n}$.  We see by construction that $\cal N^\sharp = (\cal M \rtimes_{\alpha} G, b_n)$ is a presentation of $\cal M \rtimes_{\alpha} G$.
   \\
    Now let $x$ a rational point of $\cal N^\sharp$ and $k$ be a natural number.  Formally compute $x^*x+xx^*$ By definition, after a sequence of computable string manipulations (distributing adjoints, factoring out all scalars, polynomial multiplication, etc.), $x^*x+xx^*$ is a finite sum of the form:
    \[
    \sum_{i} r_i(n_{i,1})(n_{i,2})...(n_{i, l_i})
    \]
    where each $r_i$ is rational and $(n_{i,j})$ is in $\{\pi(m) \ : \ m \in M^\sharp\} \cup \{u_s \ : \ s \in G\} \cup \{u_s^* \ : \ s \in G\}$.  Here we are conflating $s$ with the string that represents it.  We can do this by computability of the group presentation.  Using the following calculations, we can ``push the u's to the left''.  We describe the process simply by:
    
    \begin{itemize}
        \item use $u_s^* = u_{s^{-1}}$ to get rid of adjoints of group unitaries.
        \item use $u_s u_r =u_{sr}$ to combine adjacent group unitaries.
        \item if a group unitary is to the right of a $\pi$, use $u_s\pi(x)u_s^* = \pi(\alpha_s(x))$ to move it to the left.
    \end{itemize}

    It is easy to see that this process terminates after finitely many steps and leaves us with a new sum of the form:
    \[
    \sum_{i} r'_i(n'_{i,1})(n'_{i,2})...(n'_{i, l_i})
    \]
    where only $n'_{i, l_i}$ is a group unitary for each $i$. Using solvability of the word problem, disregarding group unitaries not corresponding to $e$, and combining $\pi$s and distributing $\alpha$s we get an expression of the form:
    \[
    p(x)u_e
    \]
    where $p(x)$ is a polynomial combination of $\alpha_s$s of rational points.  Let $k'$ be the number of times $\alpha$ appears.  By computability of $\alpha$ we can approximate each monomial of the form $\alpha_s(p)$ by rational points $p'$ up to tolerance $2^{-k-k'}$.  Replacing the $\alpha$ monomials by their approximants, this is something we can approximate $\|\cdot\|_{\hat{\varphi}}^\sharp$ of by assumption.  The conclusion follows.
\end{proof}

\begin{remarks}
\
    \begin{itemize}
        \item The previous result is easily generalized to other Turing degrees with essentially the same proof, invoking oracles as necessary.
        \item Since $\bbC$ has computable presentation given by $\bbQ(i)$, and $L(G) \cong \bbC \rtimes_{\alpha} G$ when $\alpha$ is the trivial action, the result above recovers \cite[Theorem 3.10]{GHHyp} as a special case.
    \end{itemize}
\end{remarks}

\subsection{Group Measure Space Constructions and Computable Dynamics}

We now show how the previous theorem can be used to produce computable presentations of many of the $\mathrm{II}_1$ factors which are commonly used in operator algebras.  This is because such $\mathrm{II}_1$ factors are often constructed via the group-measure space construction.  This is a special case of the crossed product given by actions on a measure space $(X, m)$ with measure $m$ (or equivalently the commutative von Neumann algebra $\mathrm{L}^\infty(X, m)$ of essential bounded measurable functions thereupon).  Computability in measure spaces and dynamics thereupon has been given a fair amount of attention by computability theorists (see \cite{FranklinTowsner}, \cite{GHR} and \cite{Moriakov}).  The present subsection sets up the connection between computable dynamics and the computability of the corresponding group measure space construction.  We hope that this will be used to explore further how results from computable dynamics can be applied to operator algebras and vice-versa.

A convenient choice of measure space is given by the Cantor space.  Consider the Cantor set $C = \{0,1\}^{\mathbb{N}}$ and a computable enumeration $w_1, \ldots$ of words in $C$.  Define $[w_n]$ to be the set of words with initial segment $w_n$.  By a zig-zag argument, we can recursively enumerate finite unions of the form $[w_{i_1}] \cup \ldots \cup [w_{i_n}]$.  We will refer to the index of this enumeration corresponding to $U = [w_{i_1}] \cup \ldots \cup [w_{i_n}]$ as a \textbf{code} for $U$.  Consider the $\sigma$-algebra $\Sigma$ generated by such sets. 

\begin{defn}
    We say that a probability measure $\mu: \Sigma \to [0,1]$ is \textbf{computable} if there exists an algorithm which takes as inputs a code for $U = [w_{i_1}] \cup \ldots \cup [w_{i_n}]$ and a rational $\e > 0$, and returns $a,b \in \bbR$ such that $\mu(U) \in (a,b)$ and $b-a < \e$. 
\end{defn}

\begin{prop}
    If $\mu$ is a computable probability measure on $\Sigma$, then $\mathrm{L}^\infty(C, \mu)$ with the tracial state $\varphi(f) = \int_C f d\mu$ admits a computable presentation.
\end{prop}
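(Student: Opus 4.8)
\textbf{Goal and setup.} The plan is to exhibit an explicit sequence of special points for $\mathrm{L}^\infty(C,\mu)$ and then verify that the $\|\cdot\|_1$-norm (here the $\mathrm{L}^1(\mu)$-norm coming from the tracial state $\varphi(f) = \int_C f\, d\mu$) of any rational $^*$-polynomial in these points is computable, uniformly and effectively. First I would take as special points the indicator functions $\chi_{[w_n]}$ of the basic clopen cylinder sets $[w_n]$, ranging over the computable enumeration $w_1, w_2, \ldots$ of finite words already fixed in the excerpt. Since finite unions of cylinders, together with their complements, generate the measure algebra, and since $\mathrm{L}^\infty(C,\mu)$ is generated as a von Neumann algebra (equivalently, as a $\mathrm{C}^*$-algebra in its $\sigma$-weak closure) by the simple functions on such a basis, this sequence does generate $\mathrm{L}^\infty(C,\mu)$ in the appropriate sense. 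So $(\mathrm{L}^\infty(C,\mu), (\chi_{[w_n]}))$ is a legitimate presentation in the sense of the earlier definition.

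\textbf{Reduction of a rational $^*$-polynomial to a rational simple function.} The key structural observation is that the algebra is \emph{commutative} and the generators are \emph{idempotent} projections: $\chi_{[w]}^* = \chi_{[w]}$ and $\chi_{[w]}^2 = \chi_{[w]}$, with $\chi_{[w]}\chi_{[w']} = \chi_{[w]\cap[w']}$, and the intersection of two cylinders is again a cylinder (or empty). Hence, given any rational $^*$-polynomial $p$ in finitely many generators $\chi_{[w_{i_1}]}, \ldots, \chi_{[w_{i_k}]}$, I would first compute, by purely formal (hence computable) symbolic manipulation, a normal form expressing $p$ as a rational linear combination $\sum_A c_A\, \chi_{U_A}$ of indicators of the atoms $U_A$ generated by the finitely many cylinders involved; each atom is a Boolean combination of the $[w_{i_j}]$, and is therefore a finite union of cylinders for which we can effectively produce a code in the sense defined just above the proposition. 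The coefficients $c_A$ lie in $\mathbb{Q}(i)$ and are obtained by finitely many rational arithmetic operations, so this step is manifestly algorithmic.

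\textbf{Computing the norm.} Once $p$ is in the form $\sum_A c_A \chi_{U_A}$ with the $U_A$ pairwise disjoint, its $\mathrm{L}^1$-norm is simply
\[
\|p\|_1 = \int_C \Big| \sum_A c_A \chi_{U_A} \Big|\, d\mu = \sum_A |c_A|\,\mu(U_A).
\]
Each $|c_A|$ is a rational real number that we can approximate to arbitrary precision, and each $\mu(U_A)$ is approximable to within any rational tolerance by the hypothesis that $\mu$ is a computable measure (feeding the algorithm the code for $U_A$). Given a target precision $2^{-k}$, I would request each $\mu(U_A)$ to tolerance $2^{-k}/(1 + \sum_A |c_A|)$ and bound $|c_A|$ from above by a rational, so that the finite sum approximates $\|p\|_1$ to within $2^{-k}$; the triangle inequality controls the accumulated error across the finitely many atoms. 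This yields the required rational output and completes the verification that the presentation is computable.

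\textbf{Main obstacle.} The routine parts are the symbolic normalization and the final error bookkeeping. The step that genuinely requires care is the reduction to atoms: I must argue that the Boolean atoms generated by finitely many cylinders are again finite unions of cylinders with \emph{effectively computable codes}, so that the computable-measure oracle actually applies. This is where the concrete clopen structure of Cantor space and the fixed enumeration $w_1, w_2, \ldots$ is indispensable, since complementation and intersection of cylinder sets must be turned into codes in the specified enumeration uniformly; I would handle it by noting that on Cantor space every Boolean combination of basic cylinders is clopen, hence a finite union of cylinders, and that the word combinatorics producing these unions is primitive recursive in the input words.
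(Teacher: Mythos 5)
Your proposal is correct and follows essentially the same route as the paper's proof: the same special points $\chi_{[w_n]}$, the same reduction of rational $^*$-polynomials via commutativity and idempotency to rational linear combinations of indicators of effectively coded clopen sets, and the same appeal to the computability of $\mu$ to approximate the resulting norm. The only cosmetic difference is that you evaluate the $\mathrm{L}^1(\mu)$-norm where the presentation is naturally taken with respect to the tracial $\mathrm{L}^2$-norm $\varphi(p^*p)^{1/2}$, but your disjoint-atom decomposition computes either norm with no change to the argument.
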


\begin{proof}
    Take the sequence $(a_n) = p_{[w_n]}$ where $p_{[w_n]}(x) = 1$ if $x \in [w_n]$ and 0 otherwise as the sequence of special points.  Note that our special points are all self-adjoint so the generated points are simply $\bbQ(i)$-polynomials of the special points.  It is a standard exercise in measure theory to see that $\bbQ(i)$-polynomials in such functions form a dense subset of $\mathrm{L}^\infty(C, \mu)$.  We remind the reader that this density is in the $\mathrm{L}^2$-norm; it is usually not the case that $\mathrm{L}^{\infty}$ is even separable in its usual $\mathrm{L}^{\infty}$-norm.  Note that $a_n a_m = p_{[w_n] \cap [w_m]}$ and $a_n^2 = a_n$.  Thus we can write any $\bbQ(i)$-polynomial of special points as $\bbQ(i)$-linear combinations of functions of the form $p_{[w_{i_1}] \cap \ldots \cap [w_{i_n}]}$.  It can be seen that a set of the form $[w_{i_1}] \cap \ldots \cap [w_{i_n}]$ can be rewritten as a finite union of sets of the form $[w]$ and that the procedure for doing so is effective.  By the assumption that $\mu$ is computable, we can compute the norms of these.
\end{proof}

Note that group actions will most often not preserve sets of the form $[w_i]$.  With this in mind, we make the following definition.

\begin{defn}
    We call a sequence of subsets $U_j$ of $C$ \textbf{uniformly computable} if there is a recursively enumerable subset $I$ of $\mathbb{N} \times \mathbb{N}$ such that $U_j = \bigcup_{(i,j) \in I} [w_i]$.
\end{defn}

We can now make sense of computability of actions on $C$.

\begin{defn}
    We say that a group action $\alpha$ of a computably presented group $G$ with computable word problem on $C$ is \textbf{computable} if for every $g \in G$, the sequence $U_i = \alpha_g([w_i])$ is uniformly computable.
\end{defn} 

\begin{prop}\label{LinftyPres}
    Assume that $G$ is computably presented with computable word problem. Assume further that $G$ acts computably on $C$ by $\alpha$ and that $C$ has a computable measure $\mu$.  Then $\alpha$ induces a computable action of $G$ on $\mathrm{L}^\infty(C)$ with the presentation given in Proposition \ref{LinftyPres}.
\end{prop}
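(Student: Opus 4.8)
The plan is to verify the definition of a computable action directly: given $s \in G$, a rational point $p$ of the presentation $\mathcal{M}^\sharp$ of $\mathrm{L}^\infty(C,\mu)$ furnished by the preceding proposition, and $k \in \bbN$, I will produce a rational point $q$ with $d(\alpha_s(p), q) < 2^{-k}$, where $d$ is the tracial metric $d(f,g) = \|f - g\|_2$ with $\|h\|_2^2 = \int_C |h|^2 \, d\mu$. First I reduce to a single generator. The induced action $\alpha_s(f) = f \circ \alpha_s^{-1}$ is a trace-preserving $^*$-automorphism, so it commutes with the $^*$-polynomial operations and sends the indicator $p_{[w_i]}$ of a cylinder to $p_{\alpha_s([w_i])}$. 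Moreover, exactly as in the proof of the preceding proposition---using $a_n a_m = p_{[w_n] \cap [w_m]}$, $a_n^2 = a_n$, and the fact that an intersection of cylinders in $C$ is again a cylinder or empty---every rational point simplifies to a $\bbQ(i)$-linear combination $p = \sum_\ell r_\ell \, p_{[w_{m_\ell}]}$ of cylinder indicators. Applying $\alpha_s$ termwise gives $\alpha_s(p) = \sum_\ell r_\ell \, p_{\alpha_s([w_{m_\ell}])}$, so it suffices to approximate each $p_{\alpha_s([w_{m_\ell}])}$ in $\|\cdot\|_2$ by a rational point and recombine.

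For the core approximation, fix a cylinder $[w_m]$. By computability of the action on $C$, the image $\alpha_s([w_m])$ is uniformly computable: there is a recursively enumerable $I_s \subseteq \bbN \times \bbN$ with $\alpha_s([w_m]) = \bigcup_{(j,m) \in I_s} [w_j]$. Enumerating $I_s$ yields an increasing chain of finite subunions $V^{(N)} = \bigcup_{(j,m)\in I_s,\, j \le N} [w_j] \subseteq \alpha_s([w_m])$, each a finite union of cylinders and hence a rational point with a computable code. Since $V^{(N)} \subseteq \alpha_s([w_m])$, we have $\|p_{\alpha_s([w_m])} - p_{V^{(N)}}\|_2^2 = \mu\big(\alpha_s([w_m]) \setminus V^{(N)}\big) = \mu(\alpha_s([w_m])) - \mu(V^{(N)})$. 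Because $\alpha_s$ preserves $\mu$, the limit $\mu(\alpha_s([w_m])) = \mu([w_m])$ is a computable real, while the quantities $\mu(V^{(N)})$ are computable (the measure $\mu$ is computable on finite unions of cylinders) and increase to it by countable additivity. Hence, for any prescribed rational $\delta > 0$, a finite search over $N$ returns an index with $\mu(\alpha_s([w_m])) - \mu(V^{(N)}) < \delta$, that is, a finite union $V = V^{(N)}$ with $\|p_{\alpha_s([w_m])} - p_V\|_2 < \sqrt{\delta}$.

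To finish, choose a rational $\delta$ with $\big(\sum_\ell |r_\ell|\big)\sqrt{\delta} < 2^{-k}$, run the previous step on each $[w_{m_\ell}]$ to obtain finite unions $V_\ell$ with $\|p_{\alpha_s([w_{m_\ell}])} - p_{V_\ell}\|_2 < \sqrt{\delta}$, and output the rational point $q = \sum_\ell r_\ell \, p_{V_\ell}$. By the triangle inequality, $d(\alpha_s(p), q) = \big\| \sum_\ell r_\ell (p_{\alpha_s([w_{m_\ell}])} - p_{V_\ell}) \big\|_2 \le \sum_\ell |r_\ell| \sqrt{\delta} < 2^{-k}$. Every step is effective in $s$, $p$, and $k$, which is exactly the required computability of the induced action.

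The only real obstacle is that $\alpha_s([w_m])$ is in general an infinite, merely recursively enumerable union of cylinders rather than a finite one, so $\alpha_s$ does not preserve the set of rational points and $p_{\alpha_s([w_m])}$ can only be approximated; the crux is an \emph{effective} stopping rule for truncating this union. I resolve it using measure-preservation, which supplies the exact computable target $\mu([w_m])$ against which to monitor the finite truncations $V^{(N)}$. If one prefers to assume only that each $\alpha_s$ is a nonsingular automorphism, the same conclusion holds by a squeeze: since $C \setminus [w_m]$ is a finite union of cylinders, $\alpha_s(C \setminus [w_m]) = C \setminus \alpha_s([w_m])$ is also a recursively enumerable union of cylinders, and approximating both it and $\alpha_s([w_m])$ from below---their measures summing to $1$---computes $\mu(\alpha_s([w_m]))$ to arbitrary precision, after which the argument proceeds verbatim.
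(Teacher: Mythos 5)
Your proposal is correct, and its essential mechanism is the same as the paper's: the only genuine obstacle is an effective stopping rule for truncating the recursively enumerable union $\alpha_s([w_m]) = \bigcup [w_j]$ to a finite union of cylinders, and that rule comes from computing $\mu(\alpha_s([w_m]))$ to arbitrary precision and comparing it with the measures of the finite truncations. The one real difference is how that target measure is obtained: your primary route invokes measure-preservation to identify it with the already-computable $\mu([w_m])$, whereas the paper does not use preservation at all and instead runs the squeeze you describe in your final paragraph --- approximating both $\alpha_s([w_m])$ and its complement (itself an r.e.\ union of cylinders, since $C \setminus [w_m]$ is a finite union of cylinders) from below, so the lower approximants of the complement yield upper bounds on $\mu(\alpha_s([w_m]))$. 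Since the proposition's hypotheses do not explicitly state that $\alpha$ preserves $\mu$ (it is implicit in the paper's standing convention that actions preserve the weight), your inclusion of the squeeze variant makes the argument robust either way. Your write-up is in fact more complete than the paper's: the reduction of a general rational point to a $\bbQ(i)$-combination of cylinder indicators, the identity $\|p_{\alpha_s([w_m])} - p_{V^{(N)}}\|_2^2 = \mu(\alpha_s([w_m]) \setminus V^{(N)})$, and the final triangle-inequality recombination are all left to the reader in the paper's ``the claim now follows.''
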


\begin{proof}
    By computability of $\alpha$, we can approximate $\int_C p_{[w_n]}(\alpha_{s^{-1}}(x)) d\mu$ which is $\mu(\alpha_{s^{-1}}([w_n]))$ for any $s \in G$ from below.  We may then notice that the complement of $[w_n]$ is a finite union of $[v_i]s$ and use the same method to approximate the measure of the complement $\mu(\alpha_{s^{-1}}([w_n]))^c$.  This gives a sequence $a_i$ of approximations of the measure of the complement from below.  Then $1-a_i$ is a sequence of upper bounds for $\int_C p_{[w_n]}(\alpha_{s^{-1}}(x)) d\mu$.  The claim now follows.
\end{proof}

This gives us a convenient way to move from the dynamics on a measure space to the associated group measure space construction.  Namely, together with Theorem \ref{CrossedPresentation}, the above gives us the following corollary.

\begin{cor}
    Assume that $G$ is computably presented with computable word problem. Assume further that $G$ acts computably on $C$ by $\alpha$ and that $C$ has a computable measure $\mu$.  Then $\mathrm{L}^\infty(C) \rtimes_{\alpha} G$ has a computable presentation.
\end{cor}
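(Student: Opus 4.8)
The plan is to prove this corollary simply by assembling the three ingredients developed immediately beforehand. The final statement asserts that $\mathrm{L}^\infty(C) \rtimes_{\alpha} G$ admits a computable presentation, given that $G$ is computably presented with computable word problem, that $G$ acts computably on the Cantor space $C$ by $\alpha$, and that $C$ carries a computable measure $\mu$. The key observation is that all the hard work has already been done, and the corollary is a matter of checking that the hypotheses of Theorem \ref{CrossedPresentation} are met.

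First I would invoke the proposition preceding Proposition \ref{LinftyPres} (the one producing a computable presentation of $\mathrm{L}^\infty(C,\mu)$ from a computable measure $\mu$) to obtain a computable presentation $\mathcal{M}^\sharp = (\mathrm{L}^\infty(C,\mu), (a_n))$ of the weighted von Neumann algebra $\mathcal{M} = \mathrm{L}^\infty(C,\mu)$ with tracial state $\varphi(f) = \int_C f\, d\mu$. This uses exactly the hypothesis that $\mu$ is computable, and it supplies the ``computable presentation $\mathcal{M}^\sharp$'' required by Theorem \ref{CrossedPresentation}. Second, I would apply Proposition \ref{LinftyPres} itself, which upgrades the computability of the action $\alpha$ on $C$ (as subsets) to computability of the \emph{induced} action of $G$ on the presented structure $\mathrm{L}^\infty(C)$ in the sense of the earlier definition of a computable action; this is precisely the hypothesis ``$\alpha$ is an action on $\mathcal{M}$ that is computable'' needed by Theorem \ref{CrossedPresentation}. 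Here I should be a little careful to note that Proposition \ref{LinftyPres} is stated for the presentation of Proposition \ref{LinftyPres} (a self-reference that I read as meaning the immediately preceding $\mathrm{L}^\infty$ presentation), so the presentation used in the two steps must be the same one.

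Having verified these hypotheses, the conclusion follows directly by feeding everything into Theorem \ref{CrossedPresentation}: since $G$ is finitely (indeed computably) presented with solvable word problem, $\mathcal{M} = \mathrm{L}^\infty(C,\mu)$ has a computable presentation, and $\alpha$ is a computable action of $G$ on $\mathcal{M}$, Theorem \ref{CrossedPresentation} yields that $\mathrm{L}^\infty(C) \rtimes_\alpha G$ admits a computable presentation. The only genuine obstacle I anticipate is bookkeeping rather than mathematical: one must confirm that the notion of ``computable action'' delivered by Proposition \ref{LinftyPres} matches verbatim the notion required as input to Theorem \ref{CrossedPresentation}, and that ``finitely presented'' in the statement of Theorem \ref{CrossedPresentation} can be relaxed to the ``computably presented with computable word problem'' hypothesis of the corollary (the enumeration $s_n$ of $G$ used in the proof of Theorem \ref{CrossedPresentation} only requires a computable enumeration with decidable multiplication, which computable presentability with solvable word problem provides). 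Once these definitional compatibilities are confirmed, the corollary is immediate and no further computation is needed.
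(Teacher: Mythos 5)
Your proposal is correct and matches the paper's own (implicit) argument exactly: the paper offers no separate proof, stating only that the corollary follows by combining the computable presentation of $\mathrm{L}^\infty(C,\mu)$, Proposition \ref{LinftyPres}, and Theorem \ref{CrossedPresentation}. Your observation that the hypothesis ``finitely presented with solvable word problem'' in Theorem \ref{CrossedPresentation} should be read as ``computably presented with computable word problem'' (since the proof only needs a computable enumeration of $G$ with decidable multiplication) is a fair and accurate reading of a small mismatch in the paper's statements.
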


We conclude with the remark that the above corollary, together with \cite[Theorem 4.3]{AM}, no locally universal tracial von Neumann algebra can be expressed as a group measure space construction in a way that is computable.  This may more directly explain the lack of operator-algebraic constructions of locally universal tracial von Neumann algebras in operator algebras.  It also suggests a strategy for producing such examples.

\section*{Acknowledgments} We thank Bradd Hart and Thomas Sinclair for helpful discussions about an early draft of this paper. 



\begin{thebibliography}{99999999!}  

\bibitem{AH} H.\ Ando and U.\ Haagerup, \emph{Ultraproducts of von Neumann algebras}, J. Funct. Anal. {\bf 266} (2014) 6842--6913.

\bibitem{Arul} J. Arulseelan, \emph{Model Theory of Full Left Hilbert Algebras and a Generalized Ocneanu Ultraproduct} arXiv preprint arXiv:2508.17241 (2025).

\bibitem{AGH}  J. Arulseelan, I. Goldbring and B. Hart, \emph{The Undecidability of Having the QWEP}, Journal of Operator Theory. {\bf 92}(2) 349–362, 2024 

\bibitem{AGHS} J. Arulseelan, I. Goldbring, B. Hart and T. Sinclair \emph{Totally Bounded Elements in W*-probability Spaces}, arXiv preprint, \url{arXiv:2501.14153}

\bibitem{AM} J. Arulseelan and A. Manzoor \emph{The Universal Theory of Locally Universal Tracial von Neumann Algebras is not Computable}  arXiv preprint, \url{arXiv:2508.21709}

\bibitem{Arveson} W. Arveson, \emph{On Groups of Automorphisms of Operator Algebras}, Journal Of Functional Analysis 15, 21’7-243 (1974) 

\bibitem{BYG} I. Ben Yaacov, and I. Goldbring. \emph{Unitary Representations of Locally Compact Groups as Metric Structures.} Notre Dame Journal of Formal Logic {\bf 64.2} (2023): 159-172.

\bibitem{BHI} A. Berenstein, C.W. Henson, and T. Ibarlucía. \emph{Existentially closed measure-preserving actions of free groups} arXiv preprint arXiv:2203.10178 (2022).



\bibitem{Connes76} A. Connes, \emph{Classification of Injective Factors Cases II$_1$, II$_\infty$, III$_\lambda$, $\lambda\not=1$}, Annals of Mathematics, \textbf{104} (1976) 73--115.

\bibitem{Connes73} \bysame, \emph{Une Classification des Facteurs de Type ${\rm III}$}, Ann. Sci. \'Ecole Norm. Sup. (4) {\bf 6}, 1973, 133-252.

\bibitem{Dab} Y. Dabrowski, \emph{Continuous Model Theories for von Neumann Algebras}, J. Func. Anal., {\bf 277} (11), 2019.

\bibitem{FG} I. Farah, and S. Ghasemi, \emph{Preservation of Elementarity by Tensor Products of Tracial von Neumann Algebras}. Pacific Journal of Mathematics 332.1 (2024): 91-113.

\bibitem{GLR} P. Ghez, R. Lima, J.E. Roberts, \emph{The Spectral Category and the Connes Invariant $\Gamma$}. Journal of Operator Theory, vol. 14, no. 1, 1985, pp. 129–46. 


\bibitem{Munster} I. Farah, B. Hart, M. Lupini, L. Robert, A. Tikuisis, A. Vignati, and W. Winter \emph{Model Theory of \cstar-algebras}, Mem. Amer. Math. Soc., {\bf 1324}, 2021, 142 pgs.

\bibitem{FHS1} I. Farah, B. Hart, and D. Sherman, \emph{Model Theory of Operator Algebras I: Stability}, Bull. London Math. Soc. 45 (2013), 825–838.

\bibitem{FHS2}  \bysame, \emph{Model Theory of Operator Algebras II:  Model Theory}, Israel J. Math. \textbf{201} (2014), 477--505.

\bibitem{FHS3} \bysame, \emph{Model theory of operator algebras III:  Elementary equivalence and II$_1$ factors}, Bull. London Math. Soc. \textbf{46} (2014), 1--20.

\bibitem{FranklinTowsner} J.N.Y. Franklin, and H. Towsner. \emph{Randomness and non-ergodic systems} arXiv preprint arXiv:1206.2682 (2012).

\bibitem{Fu} Z. Fu, \emph{A note on ultraproducts of crossed product C* algebras} arXiv preprint arXiv:2505.11871 (2025).


\bibitem{GHR} S. Galatolo, M. Hoyrup, and C. Rojas, \emph{Effective symbolic dynamics, random points, statistical behavior, complexity and entropy} Information and Computation 208.1 (2010): 23-41.

\bibitem{GKL} E. Gardella, M. Kalantar, and M. Lupini. \emph{Model Theory and Rokhlin Dimension for Compact Quantum Group Actions}, Journal of Noncommutative Geometry 13.2 (2019): 711-767.

\bibitem{GL} E. Gardella, and M. Lupini. \emph{Applications of Model Theory to C*-dynamics} Journal of Functional Analysis 275.7 (2018): 1889-1942.

\bibitem{GoldbringQIT} I. Goldbring \emph{Undecidability and incompleteness in quantum information theory and operator algebras}, Monatshefte für Mathematik (2025): 1-34.

\bibitem{GHHyp} I. Goldbring and B. Hart, \emph{Operator Algebras with Hyperarithmetic Theory} Journal of Logic and Computation 31.2 (2021): 612-629.

\bibitem{GH} \bysame, \emph{A Survey on the Model Theory of Tracial von Neumann Algebras}, Model Theory of Operator Algebras, De Gruyter's Logic and its applications, {\bf 11}, 2023.

\bibitem{GH2} \bysame, \emph{The Universal Theory of the Hyperfinite II$_1$ Factor is not Computable}, Bulletin of Symbolic Logic, Volume 30 (2024), 181-198.

\bibitem{GST} I. Goldbring, B. Seward, and R. Tucker-Drob. \emph{Existentially closed measure-preserving actions of approximately treeable groups} arXiv preprint arXiv:2507.03195 (2025).

\bibitem{Ha77} U. Haagerup, \emph{An Example of a Weight with Type III Centralizer}. Proceedings of the American Mathematical Society, vol. 62, no. 2, 1977, pp. 278–80.

\bibitem{HaNormal} \bysame, \emph{Normal Weights on W$^*$-algebras}, Journal of Functional Analysis, Volume 19, Issue 3, 1975, Pages 302-317,

\bibitem{HaDual1} \bysame, \emph{On the Dual Weights for Crossed Products of von Neumann Algebras I: Removing Separability Conditions} Mathematica Scandinavica, vol. 43, no. 1, 1978, pp. 99–118. 

\bibitem{HaDual2} \bysame, \emph{On the Dual Weights for Crossed Products of von Neumann Algebras II: Application of Operator Valued Weights} Mathematica Scandinavica 43 (1978): 119-140.

\bibitem{Ha79} \bysame, \emph{Operator Valued Weights in von Neumann Algebras, I}, J. Funct. Anal. {\bf 32} 1979, 175–206.

\bibitem{Ha75} \bysame, \emph{Standard Forms of von Neumann Algebras}, Mathematica Scandinavica {\bf 37 (2)}, 1975, 271--283.

\bibitem{HaagerupSkau} U. Haagerup, and C. F. Skau. \emph{Geometric Aspects of the Tomita–Takesaki Theory II} Mathematica Scandinavica, vol. 48, no. 2, 1981, pp. 241–52. 

\bibitem{HS} U. Haagerup, and E. St\o rmer, \emph{Pointwise Inner Automorphisms of von Neumann Algebras}

\bibitem{IT} T. Ibarlucía, and T. Tsankov. \emph{A model-theoretic approach to rigidity of strongly ergodic, distal actions} arXiv preprint arXiv:1808.00341 (2018).

\bibitem{Landstad1} M.B. Landstad, \emph{Duality Theory for Covariant Systems} Transactions of the American Mathematical Society, vol. 248, no. 2, 1979, pp. 223–67.

\bibitem{Landstad2} \bysame \emph{Duality for Dual Covariance Algebras} Commun.Math. Phys. 52, 191–202 (1977).

\bibitem{MIP} Z. Ji, A. Natarajan, T. Vidick, J. Wright and H. Yuen, \emph{MIP$^*$ = RE}, preprint, arxiv 2001.04383.

\bibitem{Kad} R. Kadison, \emph{Similarity of Operator Algebras}, Acta Math., {\bf 141}, 1978, 147 -- 173.

\bibitem{MT1} T. Masuda and R. Tomatsu, \emph{Classification of Actions of Discrete Kac Algebras on Injective Factors} arXiv preprint \url{arXiv:1306.5046} (2013).

\bibitem{Moriakov} N. Moriakov, \emph{On effective Birkhoff’s ergodic theorem for computable actions of amenable groups} Theory of Computing Systems 62.5 (2018): 1269-1287.

\bibitem{NK} Y. Nakagami, and M. Takesaki, \emph{Duality for crossed products of von Neumann algebras}, Lecture Notes in Math., 731 (1979).

\bibitem{RvD} M. Rieffel and A. Van Daele, \emph{A Bounded Operator Approach to Tomita-Takesaki Theory}, Pacific J. Math. {\bf 69} (1) 187 - 221, 1977.

\bibitem{Skau} C.F. Skau \emph{Geometric Aspects of the Tomita–Takesaki Theory I} Mathematica Scandinavica, vol. 47, no. 2, 1980, pp. 311–28.

\bibitem{Takesaki} M.~Takesaki, \emph{Theory of Operator Algebras, I, II, III}. Encyclopaedia of Mathematical Sciences {\bf 124, 125, 127}, Springer-Verlag, New York, 2003.

\bibitem{Takesaki2} \bysame, \emph{Tomita's Theory of Modular Hilbert Algebras and its Applications}, Lecture Notes Math., {\bf 128}, Springer, 1970.

\bibitem{Tomatsu} R. Tomatsu. \emph{Ultraproducts of crossed product von Neumann algebras} Illinois Journal of Mathematics 61.3-4 (2017): 275-286.

\bibitem{Tomita} M. Tomita, \emph{Spectral Theory of Operator Algebras}. I, Math. J. Okayama Univ., {\bf 9} (1959), 63-98. II

\bibitem{Witten} E. Witten \emph{Gravity and the Crossed Product}, Journal of High Energy Physics 2022.10 (2022): 1-28.

\bibitem{Zilber} B. Zilber \emph{On the Logical Structure of Physics}, arXiv preprint \url{arXiv:2410.01846} (2024).

\end{thebibliography}
\end{document}